\numberwithin{equation}{section}
\newtheorem{thm}{Theorem}[section]
\newtheorem{lem}[thm]{Lemma}
\newtheorem{prop}[thm]{Proposition}
\newtheorem{rem}[thm]{Remark}
\newtheorem{defn}[thm]{Definition}
\newtheorem{axm}[thm]{Axiom}
\newtheorem{exl}[thm]{Example}
\newtheorem{notn}[thm]{Notation}
\newtheorem{conv}[thm]{Convention}
\theoremstyle{empty}
\theoremstyle{nonumberplain}
\newtheorem{proof}{Proof}
\title{On a Completion of Cohomological Functors Generalising Tate Cohomology II}
\author{Max Gheorghiu}
\date{1 April 2026}
\newcommand{\triangleleftneq}{%
  \mathrel{\ooalign{$\lneq$\cr\raise.22ex\hbox{$\lhd$}\cr}}}
\begin{document}

\setlength{\parindent}{0cm}

\maketitle

\begin{abstract}
Viewing group cohomology as a cohomological functor, G.\! Mislin has generalised Tate cohomology from finite groups to all discrete groups by defining a completion for cohomological functors in 1994. In a previous paper, we have constructed for a cohomological functor $T^{\bullet}: \mathcal{C} \rightarrow \mathcal{D}$ its Mislin completion $\widehat{T}^{\bullet}: \mathcal{C} \rightarrow \mathcal{D}$ under mild assumptions on the abelian categories $\mathcal{C}$ and $\mathcal{D}$, which generalises Tate cohomology to all $T1$ topological groups. In this paper, we investigate the properties of Mislin completions. As their main feature, Mislin completions of Ext-functors detect finite projective dimension of objects in the domain category. We establish a version of dimension shifting, an Eckmann--Shapiro result as well as cohomology products such as external products, cup products and Yoneda products.
\end{abstract}

\tableofcontents

\section{Introduction}\label{sec:intro}

\subsection{A generalisation of Tate cohomology and its properties}\label{subsec:expo}

Originally, Tate cohomology was developed only for finite groups by J.\! Tate in the paper~\cite{tat52} from 1952 motivated by class field theory. As it unites group homology and group cohomology of finite groups in a convenient manner, it attracted the attention of group theorists~\cite[p.~128]{bro82}. It was generalised to groups of finite virtual cohomological dimension in F.\! T.\! Farrell's paper~\cite{far77} from 1977 and then to all groups in D.\! J.\! Benson and J.\! F.\! Carlson's paper~\cite{ben92} from 1992, F.\! Goichot's paper~\cite{goi92} from 1992 and G.\! Mislin's paper~\cite{mis94} from 1994. The different approaches of the above papers have been used in generalised constructions by several authors with applications to group theory, ring theory and homotopical algebra. There is a need for a uniform account that explains why the approaches underlying these constructions all lead to the same conclusions: a theory that does not only work for discrete groups or modules over a ring, but also in greater generality. The subject of the previous paper~\cite{ghe24} was to provide a uniform treatment of the entire theory, putting in place detailed proofs that establish the equivalence of different definitions of the different authors. \\

This is the follow-up paper to~\cite{ghe24} in which we investigate the properties of this generalisation of Tate cohomology that we describe in detail further below. In summary, our theory detects whether a group (object) has finite cohomological dimension. It satisfies a form of dimension shifting, meaning that one can express a Tate cohomology group of any degree isomorphically in terms of any other degree. It satisfies an Eckmann–Shapiro Lemma, meaning that Tate cohomology of finite index (closed) subgroups of a discrete or profinite group can be described by Tate cohomology of the entire group. Cup products and Yoneda products are developed in Tate cohomology under certain conditions and their key properties are established.

\subsection{Outline of the general and uniform theory}\label{subsec:outline}

We outline in the following our uniform theory of Tate cohomology. The key notion is a completion of cohomological functors. Connecting homomorphisms are one of the most important structures for cohomology. Particularly in group cohomology, they constitute fundamental computational tools. Given the prominence of connecting homomorphisms, one can view cohomological functors as a generalisation of group cohomology only preserving the connecting homomorphisms. More specifically, if $\mathcal{C}$, $\mathcal{D}$ are abelian categories, then a family of additive functors $(T^n: \mathcal{C} \rightarrow \mathcal{D})_{n \in \mathbb{Z}}$ is a cohomological functor if there are connecting homomorphisms $(\delta^n: T^n \rightarrow T^{n+1})_{n \in \mathbb{Z}}$ satisfying two straightforward axioms~\cite[p.~201--202]{kro95}. In particular, if $G$ is a discrete group, $R$ a discrete ring and $A$ a  discrete $R$-module, then  setting $H_R^n(G, -) = 0$ and $\mathrm{Ext}_R^n(A,-) = 0$ for $n < 0$ renders group cohomology and Ext-functors into cohomological functors~\cite[p.~201]{kro95}, \cite[p.~295]{mis94}. Our generalisation of Tate cohomology in~\cite{ghe24} is obtained over a completion of cohomological functors that we term a Mislin completion. More specifically, a Mislin completion of a cohomological functor $T^{\bullet}: \mathcal{C} \rightarrow \mathcal{D}$ is another cohomological functor $\widehat{T}^{\bullet}: \mathcal{C} \rightarrow \mathcal{D}$ together with a morphism $\Phi^{\bullet}: T^{\bullet} \rightarrow \widehat{T}^{\bullet}$ such that $\widehat{T}^n(P) = 0$ for any projective $P \in \mathrm{obj}(\mathcal{C})$ and $n \in \mathbb{Z}$. It satisfies the universal property that any morphism $T^{\bullet} \rightarrow V^{\bullet}$ to a cohomological functor $V^{\bullet}: \mathcal{C} \rightarrow \mathcal{D}$ also vanishing on projectives factors uniquely through $\Phi^{\bullet}$. By its universal property, any Mislin completion is unique up to isomorphism. This completion of cohomological functors tracing back to G.\! Mislin's paper~\cite{mis94} bears the advantage that it relates group cohomology with Tate cohomology in a unique manner. \\

Let us detail under what mild assumptions our Mislin completions exist and how they compare to the literature. By Theorem~3.2 in~\cite{ghe24}, a Mislin completion of a cohomological functor $T^{\bullet}: \mathcal{C} \rightarrow \mathcal{D}$ exists whenever $\mathcal{C}$ has enough projectives and in $\mathcal{D}$ all countable direct limits exist and are exact. Throughout the paper, we assume that the abelian categories $\mathcal{C}$ and $\mathcal{D}$ satisfy the above assumptions. Mislin completions generalise F.\! T.\! Farrell's approach from~\cite{far77} as we demonstrate in Section~\ref{sec:complextandcan} . F.\! T.\! Farrell formulated it originally for groups of finite virtual cohomological dimension where P.\! Symonds extended it to profinite groups of finite virtual cohomological dimension in~\cite[p.~34]{sym07}. Our theory applies to all groups and to all profinite groups. L.\! L.\! Avramov and O.\! Veliche defined Mislin completions for Ext-functors of modules over a commutative Noetherian ring $R$ and used them to characterise when $R$ is regular, complete intersection or Gorenstein. Our theory encompasses their ring theoretic work. The generality of our theory prompts us to introduce the following terminology. Whenever we apply our theory it to groups or to topological groups, we follow the convention from~\cite{kro95} and term the resulting generalisation complete cohomology. Analogously, we term the generalised Ext-functors resulting from our theory completed Ext-functors. \\

As a remarkable feature, our generalisation of Tate cohomology is applicable to condensed mathematics and thus to most topological groups of interest. Condensed mathematics a powerful novel theory developed by D.\! Clausen and P.\! Scholze in 2018~\cite{sch19}. In a nutshell, it provides a unified approach for studying topological groups, rings and modules~\cite[p.~6]{sch19}. In~\cite{sch20}, P.\! Scholze writes that he wants ``to make the strong claim that in the foundations of mathematics, one should replace topological spaces with condensed sets''. Here, a condensed set can be thought as a sheaf of sets. Let us substantiate P.\! Scholze's claim. In practice, most topological spaces (resp.\! groups, rings, etc) are $T1$, meaning that all their points are closed. These can be functorially turned into condensed sets (resp.\! groups, rings, etc), meaning that one can work only with the corresponding condensed object. Condensed mathematics provides an example of a Milsin completion that cannot be treated through any previous framework from the literature. If $\mathcal{R}$ is a condensed ring, $\mathrm{Cond}(\mathrm{Mod}(\mathcal{R}))$ the category of condensed $\mathcal{R}$-modules and $\mathrm{Cond}(\mathbf{Ab})$ the category of condensed abelian groups, then the  Mislin completion of the enriched Ext-functor
\[ \underline{\widehat{\mathrm{Ext}}}_{\mathcal{R}}^{\bullet}(A, -): \mathrm{Cond}(Mod(\mathcal{R})) \rightarrow \mathrm{Cond}(\mathbf{Ab}) \]
can be constructed only by means of our uniform theory~\cite[Remark~7.2]{ghe24}. One can define complete cohomology for all condensed groups and thus, for all $T1$ topological groups, meaning for most topological groups as the following list demonstrates \cite[Theorem~7.1]{ghe24}.
\begin{itemize}
\item Complete cohomology can be defined for all Lie groups and in particular, for $p$-adic analytic groups.
\item Complete cohomology can be defined for any Galois group or equivalently, for any profinite group. More specifically, a profinite group is an inverse limit of finite discrete groups, meaning that it can be assembled from finite groups in a particular manner. Profinite groups are of interest because they possess properties of infinite groups such as finite generation, but also properties of finite groups such as Sylow subgroups. Moreover, they have recently featured in $3$-manifold topology via the study of profinite rigidity of their fundamental groups.
\item Complete cohomology can be defined for totally disconnected locally compact (tdlc) groups that are becoming increasingly fashionable. Tdlc groups can be defined as locally profinite groups, thus they generalise both discrete and profinite groups. One can interpret their profinite open subgroups as taking the role of finite subgroups in discrete groups. Tdlc groups play a crucial role in understanding the structure of the more general locally compact groups. Examples of tdlc groups also include $p$-adic analytic groups and automorphisms of locally finite trees with the compact-open topology.
\item However, complete cohomology cannot be defined for algebraic groups via condensed mathematics. Namely, any topological group can be turned into a condensed group because any cartesian product of two topological spaces results in a product of their corresponding condensed sets. In contrast, a product of two algebraic varieties with the Zariski topology does not result in a product of the corresponding condensed sets.
\end{itemize}

\subsection{Characterising finite cohomological dimension}\label{subsec:findim}

The most prominent feature of complete cohomology is that it detects finite cohomological dimension of a group (object). Here, cohomological dimension is one of the fundamental homological invariants of a group. An object in a category has finite projective dimension if it admits a projective resolution of finite length~\cite[p.~152]{bro82}. A group (object) $G$ has finite cohomological dimension over a ring (object) $R$ if $R$ as a module object over a group ring has finite projective dimension~\cite[p.~184--185]{bro82}. Since the below result applies to condensed mathematics, complete cohomology vanishes for all $T1$ topological groups of finite cohomological dimension.

\begin{lem} (= Lemma~\ref{lem:prevanishing})
\begin{enumerate}
    \item Assume that $T^{\bullet}: \mathcal{C} \rightarrow \mathcal{D}$ is a cohomological functor where $\mathcal{C}$ has enough projectives and in $\mathcal{D}$ all countable direct limits exist and are exact. If $M \in \mathrm{obj}(\mathcal{C})$ has finite projective dimension, then $\widehat{T}^n(M) = 0$ for every $n \in \mathbb{Z}$. In particular, if every object in $\mathcal{C}$ has finite projective dimension such as in a category of modules over a ring of finite global dimension, then $\widehat{T}^{\bullet} = 0$ for any cohomological functor $T^{\bullet}$.
    \item If one takes \textbf{enriched} Ext-functors $\mathrm{Ext}_{\mathcal{C}}^n(A, -): \mathcal{C} \rightarrow \mathcal{D}$ with $A \in \mathrm{obj}(\mathcal{C})$ of finite projective dimension, then $\widehat{\mathrm{Ext}}_{\mathcal{C}}^n(A, -) = 0$ for every $n \in \mathbb{Z}$. In particular, complete cohomology $\widehat{H}_R^{\bullet}(G, M)$ vanishes if the group object $G$ has finite cohomological dimension or the module object $M$ has finite projective dimension. As this applies to any condensed group $G$, this holds for any $T1$ topological group.
\end{enumerate}
\end{lem}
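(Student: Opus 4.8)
The plan is to handle the two parts separately: part (1) by an induction on projective dimension via dimension shifting, and part (2) by converting the finiteness of the fixed argument into the boundedness of the underlying Ext-functor and then invoking the explicit construction of the completion.

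For part (1), I would first record the standard consequence of $\mathcal{C}$ having enough projectives: an object $M$ of projective dimension $d \geq 1$ fits into a short exact sequence $0 \to \Omega M \to P \to M \to 0$ with $P$ projective and $\mathrm{pd}(\Omega M) = d-1$. Since $\widehat{T}^{\bullet}$ is itself a cohomological functor, this short exact sequence produces a long exact sequence in which the terms $\widehat{T}^{n}(P)$ and $\widehat{T}^{n+1}(P)$ vanish by the defining property of the Mislin completion; the connecting homomorphism therefore induces an isomorphism $\widehat{T}^{n}(M) \cong \widehat{T}^{n+1}(\Omega M)$ for every $n$ (the direction of the shift depends on the variance of $T^{\bullet}$, but the argument is insensitive to this). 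The base case $d=0$ is precisely the statement that $\widehat{T}^{\bullet}$ vanishes on projectives, so an induction on $d$ — equivalently, iterating the isomorphism $d$ times until the syzygy $\Omega^{d}M$ becomes projective — gives $\widehat{T}^{n}(M)=0$ for all $n$. The ``in particular'' is then immediate: if every object of $\mathcal{C}$ has finite projective dimension, as over a ring of finite global dimension, the argument applies to every object and forces $\widehat{T}^{\bullet}=0$.

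For part (2), I would exploit that the finiteness hypothesis now sits on the fixed argument $A$ rather than on the variable. If $\mathrm{pd}(A)=d$, then $\mathrm{Ext}_{\mathcal{C}}^{n}(A,-)=0$ for every $n>d$ and, by convention, for $n<0$, so the cohomological functor $T^{\bullet}=\mathrm{Ext}_{\mathcal{C}}^{\bullet}(A,-)$ is concentrated in finitely many degrees. I would then appeal to the explicit construction of the Mislin completion from \cite{ghe24}, which realises it as the direct limit over successive dimension shifts,
\[ \widehat{T}^{n}(B) \;\cong\; \varinjlim_{k}\, T^{n+k}\!\left(\Omega^{k} B\right), \]
the directed system being given by the connecting homomorphisms of the syzygy extensions $0 \to \Omega^{k+1}B \to P_{k} \to \Omega^{k}B \to 0$; this countable direct limit is exactly what the hypothesis on $\mathcal{D}$ guarantees to exist and be exact. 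For $T^{\bullet}=\mathrm{Ext}_{\mathcal{C}}^{\bullet}(A,-)$ every term $\mathrm{Ext}_{\mathcal{C}}^{n+k}(A,\Omega^{k}B)$ of this system vanishes once $n+k>d$, so the system is eventually zero and its colimit vanishes; hence $\widehat{\mathrm{Ext}}_{\mathcal{C}}^{n}(A,B)=0$ for every $B$ and every $n$, that is $\widehat{\mathrm{Ext}}_{\mathcal{C}}^{\bullet}(A,-)=0$. The consequences for complete cohomology follow by specialising to group cohomology $H_{R}^{\bullet}(G,-)=\mathrm{Ext}_{RG}^{\bullet}(R,-)$: finite cohomological dimension of $G$ means $\mathrm{pd}_{RG}(R)<\infty$, which is the hypothesis of part (2) with $A=R$, whereas finite projective dimension of the coefficient module $M$ is the hypothesis of part (1). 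The condensed-group and $T1$-topological-group statements then follow from \cite[Theorem~7.1]{ghe24}.

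The main obstacle is located entirely in part (2). Part (1) is a routine dimension-shifting induction once one knows that $\widehat{T}^{\bullet}$ is a cohomological functor vanishing on projectives. The real content of part (2) is that boundedness of $A$ must be converted into vanishing of the completion, and the abstract universal property alone does not obviously suffice for this: the iterated dimension shift in the variable slot only relates completed values among themselves, so one genuinely needs the concrete direct-limit description to connect back to the eventually-vanishing ordinary Ext-groups. The care required is therefore to confirm that the description is the one furnished by the construction in \cite{ghe24}, and that the indexing and direction of the colimit match the degree shift $n \mapsto n+k$, so that an eventually-zero directed system indeed yields a zero colimit.
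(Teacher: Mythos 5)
Your proposal is correct, but it splits the lemma in a way the paper does not. The paper proves \emph{both} parts by one uniform mechanism, the resolution construction: for part (1) it chooses a projective resolution of finite length, so that the syzygies $\widetilde{M}_k$ vanish for $k \geq m$, whence the direct system $\bigl(T^{n+k}(\widetilde{M}_k)\bigr)_k$ is eventually zero and its colimit $\widehat{T}^n(M)$ vanishes; for part (2) it uses, exactly as you do, that $\mathrm{Ext}_{\mathcal{C}}^{n+k}(A,-) = 0$ once $n+k$ exceeds the projective dimension of $A$, so the same eventually-zero-colimit argument applies. Your part (2) is therefore essentially the paper's proof, and your diagnosis that the universal property alone cannot handle the fixed-argument case is sound --- the paper, too, must pass through the concrete direct-limit description. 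Your part (1), by contrast, is genuinely different: an induction on $\mathrm{pd}(M)$ using only that $\widehat{T}^{\bullet}$ is a cohomological functor vanishing on projectives, with the long exact sequence of $0 \to \Omega M \to P \to M \to 0$ giving $\widehat{T}^n(M) \cong \widehat{T}^{n+1}(\Omega M)$. This buys construction-independence: it proves the stronger statement that \emph{any} cohomological functor vanishing on projectives vanishes on all objects of finite projective dimension, without invoking the identification of the Mislin completion with the resolution construction, at the modest cost of the syzygy lemma $\mathrm{pd}(\Omega M) = \mathrm{pd}(M) - 1$ (or at least that $\Omega^d M$ is projective, via Schanuel). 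The paper's route is shorter and treats both parts identically, but silently relies on the independence of the construction from the choice of resolution in order to pick a finite-length one; your induction sidesteps that point entirely.
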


The most prominent feature of completed Ext-functors is that they determine whether an object has finite projective dimension and thus, whether a (topological) group has finite cohomological dimension.

\begin{thm}\label{thm:vanishing}
(= Theorem~\ref{thm:realvanishing}) If $\widehat{\mathrm{Ext}}_{\mathcal{C}}^{\bullet}(A, -): \mathcal{C} \rightarrow \mathbf{Ab}$ denote completed (unenriched) Ext-functors for $A \in \mathrm{obj}(\mathcal{C})$, then the following are equivalent.
\begin{enumerate}
\item The object $A$ has finite projective dimension.
\item $\widehat{\mathrm{Ext}}_{\mathcal{C}}^n(A, -) = \widehat{\mathrm{Ext}}_{\mathcal{C}}^n(-, A) = 0$ for any $n \in \mathbb{Z}$.
\item $\widehat{\mathrm{Ext}}_{\mathcal{C}}^0(A, A) = 0$.
\end{enumerate}
In particular, the zeroeth complete cohomology group detects whether a group (object) has finite cohomological dimension. This applies to any condensed group and thus to any $T1$ topological group.
\end{thm}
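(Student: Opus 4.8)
The plan is to prove the cycle of implications $(1) \Rightarrow (2) \Rightarrow (3) \Rightarrow (1)$, organised around the single distinguished element
\[ \widehat{1}_A := \Phi^0(\mathrm{id}_A) \in \widehat{\mathrm{Ext}}_{\mathcal{C}}^0(A, A), \]
the image of the identity morphism under the completion map. The implication $(2) \Rightarrow (3)$ is a trivial specialisation, and one half of $(1) \Rightarrow (2)$, namely $\widehat{\mathrm{Ext}}_{\mathcal{C}}^n(A, -) = 0$, is already furnished by Lemma~\ref{lem:prevanishing}. Thus the two points requiring genuine work are the detection statement $(3) \Rightarrow (1)$ and the vanishing $\widehat{\mathrm{Ext}}_{\mathcal{C}}^n(-, A) = 0$ in $(1) \Rightarrow (2)$. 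The key structural fact I would isolate first is that
\[ \widehat{1}_A = 0 \quad \Longleftrightarrow \quad A \text{ has finite projective dimension}, \]
from which both remaining points follow.

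For this key equivalence I would work with the explicit colimit model of the Mislin completion from~\cite{ghe24}, under which $\Phi^0$ is the canonical structure map of the directed system and the transition maps are the iterated connecting homomorphisms of the syzygy sequences $0 \to \Omega^{k+1} A \to P_k \to \Omega^k A \to 0$ attached to a fixed projective resolution of $A$. First I would check that the image of $\mathrm{id}_A$ at stage $k$ is exactly the class $\eta_k \in \mathrm{Ext}_{\mathcal{C}}^k(A, \Omega^k A)$ of the $k$-fold extension $0 \to \Omega^k A \to P_{k-1} \to \cdots \to P_0 \to A \to 0$. By dimension shifting this class corresponds to the extension class of $0 \to \Omega^k A \to P_{k-1} \to \Omega^{k-1} A \to 0$ in $\mathrm{Ext}_{\mathcal{C}}^1(\Omega^{k-1} A, \Omega^k A)$, so $\eta_k = 0$ precisely when this sequence splits, that is, when $\Omega^{k-1} A$ is projective, that is, when $\mathrm{proj\,dim}\, A \leq k-1$. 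Since $\widehat{1}_A$ is the class represented by the $\eta_k$ in the colimit, $\widehat{1}_A = 0$ holds if and only if some $\eta_k$ vanishes, which is exactly finite projective dimension of $A$. This yields $(3) \Rightarrow (1)$ at once, because $\widehat{\mathrm{Ext}}_{\mathcal{C}}^0(A, A) = 0$ forces $\widehat{1}_A = 0$.

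To finish $(1) \Rightarrow (2)$ I would propagate the vanishing of $\widehat{1}_A$ through the Yoneda product structure. Regarding $\widehat{\mathrm{Ext}}_{\mathcal{C}}^{\bullet}(A, A)$ as a graded ring and $\widehat{\mathrm{Ext}}_{\mathcal{C}}^{\bullet}(A, M)$, $\widehat{\mathrm{Ext}}_{\mathcal{C}}^{\bullet}(M, A)$ as graded modules over it, the element $\widehat{1}_A$ is the multiplicative unit and acts as the identity on both modules, being the image of $\mathrm{id}_A$ under the multiplicative completion map $\Phi$. Hence $\widehat{1}_A = 0$ forces $\xi = \xi \cdot \widehat{1}_A = 0$ and $\zeta = \widehat{1}_A \cdot \zeta = 0$ for all $\xi \in \widehat{\mathrm{Ext}}_{\mathcal{C}}^n(A, M)$ and $\zeta \in \widehat{\mathrm{Ext}}_{\mathcal{C}}^n(M, A)$, giving $(2)$. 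A product-free alternative for the $(-, A)$ half is to invoke the balancing of completed Ext together with its vanishing on projective objects in either variable, and then to d\'evissage along a finite projective resolution of $A$.

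The main obstacle I anticipate is the core equivalence $\widehat{1}_A = 0 \Leftrightarrow \mathrm{proj\,dim}\, A < \infty$: it hinges on a faithful identification of $\Phi^0(\mathrm{id}_A)$ and its colimit representatives with the syzygy extension classes in the specific model of~\cite{ghe24}, and on confirming that $\Phi$ is multiplicative with $\Phi^0(\mathrm{id}_A)$ as unit, so that the module argument applies. The vanishing $\widehat{\mathrm{Ext}}_{\mathcal{C}}^n(-, A) = 0$ is the genuinely asymmetric point, since finite projective dimension of $A$ does not bound its injective dimension and so cannot be read off term-by-term; it is exactly here that the unit-acts-as-identity mechanism (or balancing) is indispensable.
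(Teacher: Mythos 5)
Your proposal is correct in substance, but it diverges from the paper's proof at both of the points where you located the real work, and at one of them you have misdiagnosed what is needed. For $(3) \Rightarrow (1)$ the paper argues via the na\"ive (Benson--Carlson) construction rather than the resolution construction: there $\widehat{\mathrm{Ext}}_{\mathcal{C}}^0(A, A) = \varinjlim_k [\widetilde{A}_k, \widetilde{A}_k]_{\mathcal{C}}$, the classes of the identities $\mathrm{id}_{\widetilde{A}_k}$ form a compatible system representing the distinguished element, and vanishing of the colimit forces $\mathrm{id}_{\widetilde{A}_k} + \mathcal{P}_{\mathcal{C}}(\widetilde{A}_k, \widetilde{A}_k) = 0$ at some finite stage $k$; since unenriched Hom-functors are used, $\mathrm{id}_{\widetilde{A}_k}$ then genuinely factors through a projective, so $\widetilde{A}_k$ is a retract of a projective, hence projective, and the resolution truncates. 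Your route --- identifying the stage-$k$ image of $\Phi^0(\mathrm{id}_A)$ under the resolution construction with the Yoneda class $\eta_k$ of the truncated resolution, and noting $\eta_k = 0$ iff the syzygy sequence splits iff $\widetilde{A}_{k-1}$ is projective --- is a correct parallel argument; it just carries the extra bookkeeping of the dimension-shifting identification that the paper's choice of model avoids.

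The genuine discrepancy is in $(1) \Rightarrow (2)$. You assert that the vanishing $\widehat{\mathrm{Ext}}_{\mathcal{C}}^n(-, A) = 0$ cannot be read off term-by-term and that the unit-acts-as-identity mechanism (or balancing) is indispensable there. That is not so: in this framework $\widehat{\mathrm{Ext}}_{\mathcal{C}}^n(B, A)$ is the Mislin completion of the cohomological functor $\mathrm{Ext}_{\mathcal{C}}^{\bullet}(B, -)$ --- completed in the \emph{covariant} variable --- evaluated at $A$, and every construction of it (satellite, resolution, na\"ive) resolves the argument $A$ by projectives. Hence Lemma~\ref{lem:prevanishing}(1), applied to $T^{\bullet} = \mathrm{Ext}_{\mathcal{C}}^{\bullet}(B, -)$ for each $B$, gives $\widehat{\mathrm{Ext}}_{\mathcal{C}}^n(B, A) = 0$ at once: the syzygies $\widetilde{A}_k$ vanish for large $k$, so the defining colimit is eventually zero. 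Injective dimension never enters, and this one-line appeal to Lemma~\ref{lem:prevanishing} is exactly how the paper disposes of $(1) \Rightarrow (2)$. Your Yoneda-product argument is nevertheless valid --- Theorem~\ref{thm:yonedaprods}, Lemma~\ref{lem:cohomrings} and Proposition~\ref{prop:canisringhomom} do supply the unit $1_A = \Phi^0(\mathrm{id}_A)$, its two-sided identity property and the multiplicativity of $\Phi^{\bullet}$, and none of these results depends on the present theorem, so there is no circularity --- but it imports machinery from Sections~\ref{sec:cohomprodsexists} and~\ref{sec:cohomprodpropties} to prove something that Section~\ref{sec:complextandcan} settles directly from the constructions.
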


This theorem implies that complete cohomology does not vanish for the vast majority groups and thus, constitutes a  nontrivial invariant. More specifically, any group or profinite group with torsion has infinite cohomological dimension and thus, nontrivial complete cohomology.

\subsection{An Eckmann--Shapiro Lemma and dimension shifting}\label{subsec:shapiroshifting}

Complete cohomology of certain subgroups relates to the entire group via an Eckmann--Shapiro Lemma, which can be used to establish dimension shifting. For instance, an Eckmann--Shapiro Lemma pertains to finite index subgroups of discrete groups and open subgroups of profnite groups. Results à la Eckmann--Shapiro are relevant for computational purposes as computing the cohomology for a subgroup might be easier than for the entire group. As this is relevant for such a lemma, a functor $F: \mathcal{C} \rightarrow \mathcal{D}$ is said to preserve projectives if for every projective $P \in \mathrm{obj}(\mathcal{C})$ also $F(P) \in \mathrm{obj}(\mathcal{D})$ is projective.

\begin{lem}[Eckmann--Shapiro]\label{lem:shapiro}
(= Lemma~\ref{lem:eckmannshapiro}) Let $G$ be a group object, $H$ a subgroup object and $R$ a ring object in a category. Denote the abelian category of $R$-module objects with a compatible $G$-action by $Mod_R(G)$ and assume that this category has enough projectives. Assume further that any $M \in \mathrm{obj}(Mod_R(H))$ can be turned into an object in $Mod_R(G)$ by induction $\mathrm{Ind}_H^G(M)$ and coinduction $\mathrm{Coind}_H^G(M)$ while any $M \in \mathrm{obj}(Mod_R(G))$ can be turned into an object in $Mod_R(H)$ by restriction $\mathrm{Res}_H^G(M)$.
\begin{enumerate}
\item If the adjoint functors $\mathrm{Res}_H^G(-)$ and $\mathrm{Coind}_H^G(-)$ are exact and preserve projective objects, then
\[ \widehat{\mathrm{Ext}}_{R, H}^n(\mathrm{Res}_H^G(A), B) \cong \widehat{\mathrm{Ext}}_{R, G}^n(A, \mathrm{Coind}_H^G(B)) \]
as (unenriched) completed Ext-functors for every $n \in \mathbb{Z}$, $A \in \mathrm{obj}(Mod_R(G))$ and $B \in \mathrm{obj}(Mod_R(H))$. If $A = R$, one has
\[ \widehat{H}_R^n(H, B) \cong \widehat{H}_R^n(G, \mathrm{Coind}_H^G(B)) \, . \]
\item If the adjoint functors $\mathrm{Ind}_H^G(-)$ and $\mathrm{Res}_H^G(-)$ are exact and preserve projectives, then
\[ \widehat{\mathrm{Ext}}_{R, G}^n(\mathrm{Ind}_H^G(A), B) \cong \widehat{\mathrm{Ext}}_{R, H}^n(A, \mathrm{Res}_H^G(B)) \]
as (unenriched) completed Ext-functors for every $n \in \mathbb{Z}$, $A \in \mathrm{obj}(Mod_R(H))$ and $B \in \mathrm{obj}(Mod_R(G))$.
\end{enumerate}
\end{lem}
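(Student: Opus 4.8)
The plan is to deduce both parts from the corresponding \emph{classical} Eckmann--Shapiro isomorphisms for ordinary Ext, together with two structural facts about Mislin completions: that they are functorial for isomorphisms of cohomological functors, and that they commute with precomposition by an exact functor preserving projectives. Throughout I would regard each asserted isomorphism as an isomorphism of cohomological functors in the covariant variable $B$ and only complete at the very end.

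First I would establish the underlying classical isomorphisms as isomorphisms of cohomological functors. For part (2), take a projective resolution $Q_\bullet \to A$ over $H$; since $\mathrm{Ind}_H^G$ is exact and preserves projectives, $\mathrm{Ind}_H^G Q_\bullet \to \mathrm{Ind}_H^G A$ is a projective resolution over $G$, and the adjunction $\mathrm{Ind}_H^G \dashv \mathrm{Res}_H^G$ gives a natural isomorphism of $\mathrm{Hom}$-complexes $\mathrm{Hom}_G(\mathrm{Ind}_H^G Q_\bullet, B) \cong \mathrm{Hom}_H(Q_\bullet, \mathrm{Res}_H^G B)$; passing to cohomology yields $\mathrm{Ext}_{R,G}^\bullet(\mathrm{Ind}_H^G A, -) \cong \mathrm{Ext}_{R,H}^\bullet(A, \mathrm{Res}_H^G(-))$ on $Mod_R(G)$. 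Naturality in $B$ is clear, and compatibility with the connecting homomorphisms for a short exact sequence in $Mod_R(G)$ holds because $\mathrm{Res}_H^G$ is exact (so the sequence stays short exact over $H$) and the adjunction identifies the two short exact sequences of $\mathrm{Hom}$-complexes. Part (1) is the mirror image: using a projective resolution $P_\bullet \to A$ over $G$, the adjunction $\mathrm{Res}_H^G \dashv \mathrm{Coind}_H^G$, exactness and projective-preservation of $\mathrm{Res}_H^G$ to make $\mathrm{Res}_H^G P_\bullet$ a resolution over $H$, and exactness of $\mathrm{Coind}_H^G$ for the $\delta$-compatibility, one obtains $\mathrm{Ext}_{R,H}^\bullet(\mathrm{Res}_H^G A, -) \cong \mathrm{Ext}_{R,G}^\bullet(A, \mathrm{Coind}_H^G(-))$ on $Mod_R(H)$.

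Next I would record the two facts about completions. The first is that an isomorphism $\alpha \colon S^\bullet \cong {S'}^\bullet$ of cohomological functors induces an isomorphism $\widehat{S}^\bullet \cong \widehat{S'}^\bullet$: since $\widehat{S'}^\bullet$ vanishes on projectives, $\Phi_{S'}\circ\alpha$ factors uniquely through $\Phi_S$, and the uniqueness clause of the universal property forces the resulting map and its analogue for $\alpha^{-1}$ to be mutually inverse. The second, and the technical heart, is that for an exact functor $U$ preserving projectives and a cohomological functor $T^\bullet$ on the target, $\widehat{(T^\bullet\circ U)} \cong \widehat{T}^\bullet\circ U$. Here I would use the explicit construction from \cite{ghe24}, under which $\widehat{T}^n(M) = \varinjlim_k T^{n+k}(\Omega^k M)$ is the countable colimit over the dimension-shifting maps attached to a projective resolution of $M$ (precisely the countable exact colimit whose existence is assumed). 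Because $U$ is exact and preserves projectives, it sends a projective resolution of $M$ to one of $U(M)$, whence $U(\Omega^k M) \cong \Omega^k(UM)$, and it carries the defining short exact sequences $0\to\Omega^{k+1}M\to P_k\to\Omega^k M\to 0$ to those computing $\widehat{T}^\bullet(UM)$. Since the connecting homomorphisms of $T^\bullet\circ U$ are by definition those of $T^\bullet$ applied to the $U$-image sequences, the two colimit systems coincide and the completions agree.

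Finally I would assemble the pieces. For part (2), applying the first fact to the classical isomorphism gives $\widehat{\mathrm{Ext}}_{R,G}^{\,n}(\mathrm{Ind}_H^G A, -) \cong \widehat{\bigl(\mathrm{Ext}_{R,H}^\bullet(A,-)\circ\mathrm{Res}_H^G\bigr)}^{\,n}$, and the second fact with $U=\mathrm{Res}_H^G$ rewrites the right-hand side as $\widehat{\mathrm{Ext}}_{R,H}^{\,n}(A,\mathrm{Res}_H^G(-))$; the case $A=R$ is then immediate. Part (1) follows identically with $U=\mathrm{Coind}_H^G$, yielding the first displayed isomorphism and, for $A=R$, the statement about $\widehat{H}_R^n$. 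I expect the main obstacle to be the second structural fact: the bookkeeping showing that $U$ intertwines the \emph{entire} dimension-shifting colimit system, including the connecting homomorphisms, rather than merely the individual terms. Once the compatibility of $\delta$ with $U$ is pinned down the argument is routine, but this is where exactness and projective-preservation of $U$ are genuinely used and where an error would most easily hide.
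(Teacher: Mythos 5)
Your proposal is correct, and its mathematical core coincides with the paper's: both arguments ultimately rest on the classical adjunction isomorphism for ordinary Ext, its compatibility with connecting homomorphisms, and the observation that an exact, projective-preserving functor carries a projective resolution of $B$ to one of its image, so that the resolution-construction colimit on the $G$-side really computes $\widehat{\mathrm{Ext}}_{R,G}^n(A,\mathrm{Coind}_H^G(B))$. The organisation, however, is genuinely different. The paper proves the statement in one pass: it forms the adjunction isomorphisms $\mathrm{Ext}_{R,H}^m(\mathrm{Res}_H^G(A),\widetilde{B}_k)\cong\mathrm{Ext}_{R,G}^m(A,\mathrm{Coind}_H^G(\widetilde{B}_k))$ with syzygy coefficients (via naturality of the adjunction and the Five Lemma applied to kernels and cokernels of Hom-complexes), verifies by an explicit three-dimensional commuting diagram that these isomorphisms intertwine the connecting homomorphisms of the two direct systems, and then takes the direct limit inside the resolution construction. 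You instead factor the proof through two standalone lemmas about Mislin completions --- functoriality under isomorphism of cohomological functors (a formal consequence of the universal property, essentially the uniqueness remark after Definition~\ref{defn:mislincompletion}), and the commutation $\widehat{T^\bullet\circ U}\cong\widehat{T}^\bullet\circ U$ for $U$ exact and projective-preserving --- and then assemble the result purely formally. Your second lemma is exactly what the paper uses implicitly when it identifies the $G$-side limit, but isolating it buys a reusable general principle, makes transparent precisely where each hypothesis (exactness, preservation of projectives) enters, and replaces the large diagram by the routine identification of two short exact sequences of Hom-complexes; the paper's inline version buys explicitness about the direct systems, consistent with its concrete use of the resolution construction elsewhere. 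Your own diagnosis is accurate: the only place an error could hide is in checking that $U$ intertwines the entire colimit system including the connecting maps, and your argument there (that $U$ of the defining short exact sequences of syzygies are the defining short exact sequences for $U(M)$) is exactly right. One small point to make explicit in a final write-up: $Mod_R(H)$ must also be assumed to have enough projectives, which both you and the paper use tacitly.
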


For the below example we note that a profinite ring $R = \varprojlim_{i \in I} R_i$ (respectively, space, module etc.) is defined as an inverse limit of finite discrete rings $R_i$ (respectively, spaces, modules, etc.)~\cite[p.~1]{rib10}. For a profinite group $G = \varprojlim_{j \in J} G_j$ the completed group ring $R{\llbracket}G{\rrbracket} := \varprojlim_{(i, j) \in I \times J} R_i[G_j]$ is a profinite ring that is a profinite version of a (discrete) group ring $R[G]$~\cite[p.~171]{rib10}.

\begin{exl}
(= Example~\ref{exl:eckmannshapiro}) The conditions of the above lemma are satisfied in the following two instances.
\begin{enumerate}
\item $G$ is a discrete group, $H$ a finite index subgroup and $R$ a discrete commutative ring. Modules are taken over the respective group rings.
\item $G$ is a profinite group, $H$ an open subgroup and $R$ a profinite commutative ring. Profinite modules are taken over the respective completed group rings.
\end{enumerate}
\end{exl}

Dimension shifting means that one can express a cohomology group of any degree isomorphically in terms of any other degree. This is especially useful in proofs as proving a statement for any cohomology group becomes equivalent to proving the statement for one specific cohomology group such as the zeroeth. We provide a (partial) version of dimension shifting for Mislin completions.

\begin{thm}
(= Theorem~\ref{thm:dimshifting}) Let $T^{\bullet}: \mathcal{C} \rightarrow \mathcal{D}$ be a cohomological functor where $\mathcal{C}$ has enough projectives and in $\mathcal{D}$ all countable direct limits exist and are exact.
\begin{itemize}
\item For every $M \in \mathrm{obj}(\mathcal{C})$ there is $M^{\ast} \in \mathrm{obj}(\mathcal{C})$ such that $\widehat{T}^{n+1}(M^{\ast}) \cong \widehat{T}^n(M)$ for every $n \in \mathbb{Z}$.
\item If there is a monomorphism $f: M \rightarrow N$ in $\mathcal{C}$ with $\widehat{T}^k(N) = 0$ for every $k \in \mathbb{Z}$, then $\widehat{T}^{n-1}(\mathrm{Coker}(f)) \cong \widehat{T}^n(M)$.
\item Assume that $G$ is a group object, that $R$ is a ring object and that an Eckmann--Shapiro Lemma such as Lemma~\ref{lem:shapiro} holds. Then there is a monomorphism as in the previous assertion for $\widehat{T}^{\bullet} = \widehat{H}_R^{\bullet}(G, -)$ if there exists a subgroup object $H$ of finite cohomological dimension over $R$ such that $\mathrm{Res}_H^G(-)$ is a faithful functor.
\end{itemize}
\end{thm}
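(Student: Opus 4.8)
The plan is to derive all three assertions from the long exact sequence that the cohomological functor $\widehat{T}^{\bullet}$ assigns to a short exact sequence, exploiting the defining feature that $\widehat{T}^n$ vanishes on projectives. For the first assertion, I would use that $\mathcal{C}$ has enough projectives to choose an epimorphism $P \twoheadrightarrow M$ with $P$ projective and set $M^{\ast} := \ker(P \twoheadrightarrow M)$, producing a short exact sequence $0 \to M^{\ast} \to P \to M \to 0$. Applying $\widehat{T}^{\bullet}$ gives a long exact sequence in $\mathcal{D}$, and since $\widehat{T}^n(P) = \widehat{T}^{n+1}(P) = 0$ for every $n$, the connecting homomorphism $\delta^n \colon \widehat{T}^n(M) \to \widehat{T}^{n+1}(M^{\ast})$ is an isomorphism in each degree. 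The second assertion runs on the same mechanism applied to $0 \to M \xrightarrow{f} N \to \mathrm{Coker}(f) \to 0$: the hypothesis $\widehat{T}^k(N) = 0$ for all $k$ kills the two flanking terms $\widehat{T}^{n-1}(N)$ and $\widehat{T}^n(N)$, so the connecting map $\widehat{T}^{n-1}(\mathrm{Coker}(f)) \to \widehat{T}^n(M)$ is an isomorphism.

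For the third assertion the task is to manufacture, for a given $M$, a monomorphism $f \colon M \to N$ with $\widehat{H}_R^k(G, N) = 0$ for all $k$, so that the second assertion then applies. My candidate is the unit of the adjunction $\mathrm{Res}_H^G \dashv \mathrm{Coind}_H^G$, that is $\eta_M \colon M \to \mathrm{Coind}_H^G(\mathrm{Res}_H^G(M))$, so I set $N := \mathrm{Coind}_H^G(\mathrm{Res}_H^G(M))$. The required vanishing would follow by chaining two earlier results: as $H$ has finite cohomological dimension over $R$, Lemma~\ref{lem:prevanishing} gives $\widehat{H}_R^k(H, \mathrm{Res}_H^G(M)) = 0$ for every $k$, and the Eckmann--Shapiro isomorphism of Lemma~\ref{lem:shapiro}(1) with $B = \mathrm{Res}_H^G(M)$ transports this to $\widehat{H}_R^k(G, N) \cong \widehat{H}_R^k(H, \mathrm{Res}_H^G(M)) = 0$.

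The remaining and genuinely non-formal point is to prove that $\eta_M$ is a monomorphism, and this is exactly where the faithfulness hypothesis on $\mathrm{Res}_H^G$ enters. I would invoke the standard categorical fact that for an adjunction $L \dashv R$ the left adjoint $L$ is faithful precisely when every component of the unit $1 \to RL$ is monic: through the adjunction bijection, injectivity of $g \mapsto Lg$ on $\mathrm{Hom}(X, M)$ translates, via naturality of the unit, into injectivity of $g \mapsto \eta_M \circ g$, which is the statement that $\eta_M$ is monic. Applying this with $L = \mathrm{Res}_H^G$, assumed faithful, yields that $\eta_M$ is a monomorphism and completes the construction. I expect the main obstacle to lie not in any single deep step but in the bookkeeping around the hypotheses: one must ensure that the exactness and projective-preservation conditions feeding Lemma~\ref{lem:shapiro} are in force, and that Lemma~\ref{lem:prevanishing} is applied to the pair $(H, \mathrm{Res}_H^G(M))$ rather than to $(G, M)$, since it is the finite cohomological dimension of the \emph{subgroup} $H$ that drives the argument.
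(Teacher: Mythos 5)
Your proposal is correct and follows essentially the same route as the paper: the first two assertions via the long exact sequence applied to $0 \to M^{\ast} \to P \to M \to 0$ (resp.\ $0 \to M \to N \to \mathrm{Coker}(f) \to 0$) together with vanishing on projectives, and the third via the unit of the adjunction $\mathrm{Res}_H^G \dashv \mathrm{Coind}_H^G$, whose components are monomorphisms precisely because the left adjoint is faithful (the paper cites MacLane, Theorem~IV.3.1 for this), combined with Lemma~\ref{lem:prevanishing} and the Eckmann--Shapiro isomorphism. Your write-up is in fact more explicit than the paper's proof about how the vanishing is transported from $(H, \mathrm{Res}_H^G(M))$ to $(G, \mathrm{Coind}_H^G(\mathrm{Res}_H^G(M)))$, which the paper leaves implicit.
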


\begin{exl}
(= Example~\ref{exl:dimshifting}) The conditions of the third assertion in the above theorem are satisfied in the following two instances.
\begin{enumerate}
\item $G$ is a discrete group, $H$ a finite-index subgroup of finite cohomological dimension and $R$ a discrete commutative ring. Modules are taken to be discrete over the group ring $R[G]$.
\item $G$ is a profinite group, $H$ an open subgroup of finite cohomological dimension and $R$ a profinite commutative ring. Modules are taken to be profinite over the completed group ring $R{\llbracket}G{\rrbracket}$.
\end{enumerate}
\end{exl}

\subsection{Cup products and Yoneda products}\label{subsec:cohomprods}

Most of this paper is dedicated to cup products and Yoneda products for complete cohomology that we construct in great generality and whose properties we establish. Let us first focus on cup products. Thus far, cup products have been developed for Tate--Farrell cohomology of groups with finite virtual cohomological dimension in~\cite[pp.~278--279]{bro82} and their (non-)vanishing in the case of finite groups has been investigated in~\cite{ben92}. The only reason why they are well defined in the latter two settings is because they are obtained from the cup products of `ordinary' cohomology via dimension shifting. Our novel construction of cup products for complete cohomology generalises the previous two constructions by extending them to all groups and to all profinite groups (Lemma~\ref{lem:tatefarrelltwo}). \\

Cup products in complete cohomology descend from external products defined for completed Ext-functors which in turn descend from tensor products. Thus, our treatment of cup products requires the following general preliminaries on tensor products. Tensor products are taken to be bi-additive associate functors. Given our focus on group cohomology, we only consider tensor products $\otimes_R$ in categories of module objects $Mod_R$ over a ring object $R$ or in categories $Mod_R(G)$ of $R$-module objects with a compatible action of a group object $G$. In either case, it is assumed that this category has enough projectives and that the tensor product $P \otimes_R Q$ is projective whenever $P$, $Q$ are projective. Further, it is assumed that there are natural isomorphisms $M \otimes_R R \cong M \cong R \otimes_R M$. For instance, the tensor product of discrete modules and of profinite modules satisfy the above conditions (Example~\ref{exl:tensorprods}).

\begin{thm}
(= Theorem~\ref{thm:extandcupprod}) Let $\otimes_R$ be a tensor product in a category of module objects $Mod_R$ over a ring object $R$ or in a category $Mod_R(G)$ of $R$-module object with a compatible action of a group object $G$. Assume in either case that this category has enough projectives.
\begin{enumerate}
    \item Let $A_{\bullet}$, $C_{\bullet}$ be projective resolutions of module objects $A, C$ such that the tensor product of resolutions $A_{\bullet} \otimes_R C_{\bullet}$ is a projective resolution of $A \otimes_R C$. If $B, E$ are module objects possessing projective resolutions of a specific form, then for every $m, n \in \mathbb{Z}$ external products
    \[ \vee: \widehat{\mathrm{Ext}}_R^m(A, B) \otimes \widehat{\mathrm{Ext}}_R^n(C, E) \rightarrow \widehat{\mathrm{Ext}}_R^{m+n}(A  \otimes_R C, B \otimes_R E) \]
    can be defined for completed (unenriched) Ext-functors.
    \item Assume that the restriction functor $Mod_R(G) \rightarrow Mod_R$ forgetting the $G$-action on $R$-module objects preserves projectives and that $R$ as an object in $Mod_R$ is projective. Then for every $m, n \in \mathbb{Z}$ there are cup products
    \[ \smile: \widehat{H}_R^m(G, M) \otimes \widehat{H}_R^n(G, N) \rightarrow \widehat{H}_R^{m+n}(G, M \otimes_R N) \]
    for complete group cohomology that descend from the above external products.
\end{enumerate}
\end{thm}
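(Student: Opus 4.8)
The plan is to build the external product on completed Ext by lifting the classical external product on ordinary Ext through the colimit that defines the Mislin completion. Throughout I use the explicit model
\[ \widehat{\mathrm{Ext}}_R^n(A, B) \;=\; \varinjlim_{k} \mathrm{Ext}_R^{n+k}(\Omega^k A, B), \]
where $\Omega^k A$ denotes the $k$-th syzygy of $A$ taken from a fixed projective resolution $A_{\bullet} \to A$, the transition maps being the dimension-shifting connecting homomorphisms attached to the short exact sequences $0 \to \Omega^{k+1}A \to A_k \to \Omega^k A \to 0$; by the balancing of completed Ext this computes the Mislin completion of $\mathrm{Ext}_R^{\bullet}(A,-)$ and vanishes on projectives as in Lemma~\ref{lem:prevanishing}. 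Classically, given projective resolutions $A_{\bullet}$, $C_{\bullet}$ for which $A_{\bullet} \otimes_R C_{\bullet}$ resolves $A \otimes_R C$, there is an external product on ordinary Ext sending a cocycle for $\alpha \in \mathrm{Ext}_R^p(A,B)$ and a cocycle for $\beta \in \mathrm{Ext}_R^q(C,E)$ to the pairing $\alpha \otimes \beta$ evaluated on the total complex of $A_{\bullet} \otimes_R C_{\bullet}$, landing in $\mathrm{Ext}_R^{p+q}(A \otimes_R C, B \otimes_R E)$. This is where the hypothesis on $B, E$ enters: it guarantees that the coefficient pairing $\mathrm{Hom}(-,B) \otimes \mathrm{Hom}(-,E) \to \mathrm{Hom}(- \otimes_R -, B \otimes_R E)$ is compatible with the chosen resolutions.

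First I would fix representatives. An element of $\widehat{\mathrm{Ext}}_R^m(A,B)$ is represented at some stage $k$ by $\alpha_k \in \mathrm{Ext}_R^{m+k}(\Omega^k A, B)$ and an element of $\widehat{\mathrm{Ext}}_R^n(C,E)$ by $\beta_l \in \mathrm{Ext}_R^{n+l}(\Omega^l C, E)$. Forming the classical external product yields
\[ \alpha_k \vee \beta_l \;\in\; \mathrm{Ext}_R^{(m+k)+(n+l)}(\Omega^k A \otimes_R \Omega^l C,\; B \otimes_R E). \]
The hypothesis that $A_{\bullet} \otimes_R C_{\bullet}$ is a projective resolution of $A \otimes_R C$ supplies, through the filtration of the associated double complex by rows and columns, a natural comparison between the $(k+l)$-th syzygy of $A \otimes_R C$ and $\Omega^k A \otimes_R \Omega^l C$ modulo projective summands. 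The induced map on Ext carries $\alpha_k \vee \beta_l$ to a representative at stage $k+l$ of the target colimit $\widehat{\mathrm{Ext}}_R^{m+n}(A \otimes_R C, B \otimes_R E)$. I would then define $\alpha \vee \beta$ to be the class of this representative and extend bi-additively.

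The hard part is well-definedness: one must show that the assignment is independent of the chosen stages $k, l$ and of the cocycle representatives, so that it descends to the colimit. Concretely this amounts to checking that the classical external products commute with the transition maps of the two source colimits and of the target colimit, that is, with the dimension-shifting connecting homomorphisms. I would verify this by a diagram chase on the double complex $A_{\bullet} \otimes_R C_{\bullet}$: the connecting homomorphism for $A$ in the passage $k \mapsto k+1$ corresponds to a shift along one axis of the double complex, that for $C$ to a shift along the other, and the syzygy comparison above intertwines a single diagonal shift of $\Omega^{\bullet}(A \otimes_R C)$ with the sum of the two, up to terms supported on projectives that vanish in the colimit by the vanishing of completed Ext on projectives (Lemma~\ref{lem:prevanishing}). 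This compatibility, together with the exactness of $\otimes_R$ and its commutation with the countable colimits defining the completion, yields a well-defined bi-additive pairing on the completions; functoriality in $A, C, B, E$ then follows from naturality of each ingredient.

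For the cup products in assertion~(2) I would reduce to assertion~(1). Writing complete group cohomology as completed Ext over the group ring, $\widehat{H}_R^n(G, M) = \widehat{\mathrm{Ext}}_{R[G]}^n(R, M)$, and taking $\otimes_R$ in $Mod_R(G)$ to be the tensor product over the base ring with the diagonal $G$-action, one has $R \otimes_R R \cong R$ as the trivial module. The external product of assertion~(1) with $A = C = R$ therefore takes the shape
\[ \widehat{\mathrm{Ext}}_{R[G]}^m(R, M) \otimes \widehat{\mathrm{Ext}}_{R[G]}^n(R, N) \;\xrightarrow{\ \vee\ }\; \widehat{\mathrm{Ext}}_{R[G]}^{m+n}(R \otimes_R R,\, M \otimes_R N) \;=\; \widehat{H}_R^{m+n}(G, M \otimes_R N), \]
and I would set $\smile := \vee$. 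It remains to verify that the hypotheses of assertion~(1) hold for $A = C = R$. A projective resolution $P_{\bullet} \to R$ over $R[G]$ exists since $Mod_R(G)$ has enough projectives, and the assumptions that restriction $Mod_R(G) \to Mod_R$ preserves projectives and that $R$ is projective over itself force each $P_i$ to be $R$-projective, hence $R$-flat. Consequently $P_{\bullet} \otimes_R P_{\bullet}$ is exact, resolves $R \otimes_R R = R$, and is degreewise projective in $Mod_R(G)$, because under exactly these hypotheses the tensor product of $G$-projectives with the diagonal action is again $G$-projective. Thus assertion~(1) applies verbatim and exhibits the cup product as descending from the external product, as claimed.
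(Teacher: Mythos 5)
Your proof rests on the explicit model $\widehat{\mathrm{Ext}}_R^n(A,B) = \varinjlim_k \mathrm{Ext}_R^{n+k}(\Omega^k A, B)$ with transition maps the connecting homomorphisms of $0 \to \Omega^{k+1}A \to A_k \to \Omega^k A \to 0$, and this model does not exist: the contravariant connecting homomorphism of that sequence is a map $\mathrm{Ext}^{j}(\Omega^{k+1}A, B) \to \mathrm{Ext}^{j+1}(\Omega^k A, B)$, which raises the cohomological degree while \emph{lowering} the syzygy index, so it cannot serve as a transition map $\mathrm{Ext}^{n+k}(\Omega^k A, B) \to \mathrm{Ext}^{n+k+1}(\Omega^{k+1}A, B)$. (By dimension shifting such a map would amount to a natural degree-raising map $\mathrm{Ext}^{n+2k}(A,B) \to \mathrm{Ext}^{n+2k+2}(A,B)$, which does not exist.) The Mislin completion of the cohomological functor $\mathrm{Ext}^{\bullet}(A,-)$ is a completion in the \emph{covariant} variable: the paper's resolution construction (Definition~\ref{defn:resolsinoutline}) gives $\widehat{\mathrm{Ext}}^n(A,B) = \varinjlim_k \mathrm{Ext}^{n+k}(A, \widetilde{B}_k)$ along the covariant connecting maps of $0 \to \widetilde{B}_{k+1} \to B_k \to \widetilde{B}_k \to 0$, which do go in the right direction. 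The only first-variable description available is Benson--Carlson's naïve construction $\varinjlim_k [\widetilde{A}_{n+k}, \widetilde{B}_k]_{\mathcal{C}}$, which must shift \emph{both} variables and work modulo maps factoring through projectives.

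Even granting a repaired two-variable model, your key step --- comparing $\Omega^k A \otimes_R \Omega^l C$ with the $(k+l)$-th syzygy of $A \otimes_R C$ ``modulo projective summands'' and pushing the classical product through --- is exactly the move that fails in general, and avoiding it is the point of the paper's proof. The remark following Theorem~\ref{thm:extandcupprod} records that the tensor product of two almost chain maps need not be an almost chain map, which is why Benson--Carlson's external product is well defined only in the Tate--Farrell setting where dimension shifting through complete resolutions is available. The paper instead completes in the coefficient variables: it forms interleaved direct systems $\mathrm{Ext}^{m+P(o)_k}(A, \widetilde{B}_{P(o)_k}) \otimes \mathrm{Ext}^{n+D(o)_k}(C, \widetilde{E}_{D(o)_k})$ indexed by sequences $o \in \lbrace 0,1 \rbrace^{\mathbb{N}}$ taking both values infinitely often; uses the compatibility of the ordinary external product with connecting homomorphisms (Diagrams~\ref{diag:extandconnfirst} and~\ref{diag:extandconnsecond}, assembled into the cube~\ref{diag:extprodcube}), where the interleaving is needed precisely so the signs $(-1)^K$ cancel in the colimit --- a bookkeeping issue you do not address; identifies the target colimit with $\widehat{\mathrm{Ext}}^{m+n}(A \otimes_R C, B \otimes_R E)$ using the hypothesis that tensoring with the syzygies $\widetilde{B}_k$, $\widetilde{E}_k$ is exact and preserves projectives, so that the interleaved tensor complex is a projective resolution of $B \otimes_R E$ (note the special hypothesis sits on the coefficients $B, E$, not where your sketch places it); and commutes the colimit past $\otimes$ via Proposition~\ref{prop:dirlimcommswithtensor}. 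Your appeal to ``the exactness of $\otimes_R$'' also assumes more than is given --- only tensoring with those particular syzygies is assumed exact. Your reduction in part~(2) (taking $A = C = R$, using that restriction preserves projectives and $R$ is projective over itself to see that the tensor product of two projective resolutions of $R$ again resolves $R$) does match the paper's argument.
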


\begin{exl}
(= Example~\ref{exl:extandcupprods}) All conditions of the above theorem are satisfied in the following instances.
\begin{enumerate}
    \item $G$ is a discrete group, $R$ a principal ideal domain and the restriction of the $R[G]$-modules $A$, $B$, $C$, $E$ to $R$-modules is projective.
    \item $G$ is a profinite group, $R$ a profinite commutative ring with a unique maximal open ideal and the restriction of the profinite $R{\llbracket}G{\rrbracket}$-modules $A$, $B$, $C$, $E$ to $S$-modules is projective. The $p$-adic integers $\mathbb{Z}_p$ are an example of such a profinite ring where the restriction of any $p$-torsionfree profinite $\mathbb{Z}_p{\llbracket}G{\rrbracket}$-module to a $\mathbb{Z}_p$-module is projective.
\end{enumerate}
\end{exl}

We construct Yoneda products for completed (unenriched) Ext-functors in the greatest possible generality. Recall that Yoneda products of Ext-functors descend from compositions of morphisms in the domain category. In particular, this generalises D.\! J.\! Benson and J.\! F.\! Carlson construction~\cite[p.~110]{ben92}.

\begin{thm}
(= Theorem~\ref{thm:yonedaprods}) Let $\mathcal{C}$ be an abelian category with enough projectives and $F, H, J \in \mathrm{obj}(\mathcal{C})$. If $\otimes$ denotes the tensor product in $\mathbf{Ab}$, then for every $m, n \in \mathbb{Z}$ Yoneda products
    \[ \circ: \widehat{\mathrm{Ext}}_{\mathcal{C}}^n(H, J) \otimes \widehat{\mathrm{Ext}}_{\mathcal{C}}^m(F, H) \rightarrow \widehat{\mathrm{Ext}}_{\mathcal{C}}^{m+n}(F, J) \]
    can be defined for completed (unenriched) Ext-functors.
\end{thm}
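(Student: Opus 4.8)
The plan is to realise the completed Yoneda product as a map of direct limits built from the ordinary Yoneda product, using the explicit form of the Mislin completion. Recall first that the classical Yoneda product
\[ \mathrm{Ext}_{\mathcal{C}}^{q}(B,C)\otimes\mathrm{Ext}_{\mathcal{C}}^{p}(A,B)\to\mathrm{Ext}_{\mathcal{C}}^{p+q}(A,C) \]
arises as composition of chain maps between projective resolutions and is natural in $A$, $B$, $C$. Recall next that, because $\mathrm{Ext}_{\mathcal{C}}^{\bullet}(X,-)$ is a cohomological functor on $\mathcal{C}$ and $\mathcal{C}$ has enough projectives, the construction of the Mislin completion in~\cite{ghe24} expresses it as the direct limit
\[ \widehat{\mathrm{Ext}}_{\mathcal{C}}^{j}(X,Y)=\varinjlim_{k}\mathrm{Ext}_{\mathcal{C}}^{j+k}(X,\Omega^{k}Y), \]
where $\Omega^{k}Y$ is the $k$-th syzygy of $Y$ in a fixed projective resolution and the transition maps are the connecting homomorphisms of the short exact sequences $0\to\Omega^{k+1}Y\to P_{k}\to\Omega^{k}Y\to 0$. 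Finally, I will use ordinary dimension shifting in the first variable: the same short exact sequences give isomorphisms $\mathrm{Ext}_{\mathcal{C}}^{j}(\Omega^{\ell}H,C)\cong\mathrm{Ext}_{\mathcal{C}}^{j+\ell}(H,C)$ whenever $j\geq 1$.

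To define the product, represent a class of $\widehat{\mathrm{Ext}}_{\mathcal{C}}^{n}(H,J)$ by some $\tilde a\in\mathrm{Ext}_{\mathcal{C}}^{n+k}(H,\Omega^{k}J)$ and a class of $\widehat{\mathrm{Ext}}_{\mathcal{C}}^{m}(F,H)$ by some $\tilde b\in\mathrm{Ext}_{\mathcal{C}}^{m+\ell}(F,\Omega^{\ell}H)$, where by raising $k$ I may assume $n+k-\ell\geq 1$. First I transport $\tilde a$ along the first-variable dimension-shifting isomorphism to a class in $\mathrm{Ext}_{\mathcal{C}}^{n+k-\ell}(\Omega^{\ell}H,\Omega^{k}J)$, so that its source matches the target $\Omega^{\ell}H$ of $\tilde b$. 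I then form the ordinary Yoneda product
\[ \mathrm{Ext}_{\mathcal{C}}^{n+k-\ell}(\Omega^{\ell}H,\Omega^{k}J)\otimes\mathrm{Ext}_{\mathcal{C}}^{m+\ell}(F,\Omega^{\ell}H)\to\mathrm{Ext}_{\mathcal{C}}^{m+n+k}(F,\Omega^{k}J) \]
and push the result into $\widehat{\mathrm{Ext}}_{\mathcal{C}}^{m+n}(F,J)=\varinjlim_{p}\mathrm{Ext}_{\mathcal{C}}^{m+n+p}(F,\Omega^{p}J)$ via the structure map at level $p=k$, producing a candidate value $\tilde a\circ\tilde b\in\widehat{\mathrm{Ext}}_{\mathcal{C}}^{m+n}(F,J)$.

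The substance of the proof is to show that this value is independent of all choices and bilinear, so that it descends to a well-defined pairing on the tensor product. Bilinearity is inherited from the ordinary Yoneda product and the additivity of the transition and shifting maps, and independence of the chain-level representatives of $\tilde a$ and $\tilde b$ follows from naturality of the ordinary product. The two remaining independences are the crux. Raising $k$ to $k+1$ replaces $\tilde a$ by its image under the connecting homomorphism in the coefficient variable $J$; compatibility here is the standard fact that Yoneda composition commutes, up to sign, with the connecting homomorphism in the outer coefficient variable, which matches the target transition map at $p=k+1$. Raising $\ell$ to $\ell+1$ replaces $\tilde b$ by its image under the connecting homomorphism of $0\to\Omega^{\ell+1}H\to P_{\ell}\to\Omega^{\ell}H\to 0$, while the first-variable shift applied to $\tilde a$ simultaneously lengthens by one step; one must check that these two changes cancel under Yoneda composition.

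I expect this last compatibility to be the main obstacle: it is the assertion that the ordinary Yoneda product is compatible, up to a controlled sign, with the connecting homomorphism in the shared middle variable $H$, so that shifting the source of the first factor and the target of the second factor along the same syzygy sequence have cancelling effects. This is precisely the point at which the two direct systems — the one resolving the coefficient $J$ and the one resolving the middle object $H$ — interact, and getting the signs and the cofinality of the index set right is where the care is needed. Once these coherences are in place, the construction assembles into the required homomorphism $\widehat{\mathrm{Ext}}_{\mathcal{C}}^{n}(H,J)\otimes\widehat{\mathrm{Ext}}_{\mathcal{C}}^{m}(F,H)\to\widehat{\mathrm{Ext}}_{\mathcal{C}}^{m+n}(F,J)$. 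As an alternative route for the coefficient variable, the universal property of the Mislin completion of $\mathrm{Ext}_{\mathcal{C}}^{\bullet}(H,-)$ can be invoked to extend $-\circ b$, for fixed ordinary $b$, directly to the completion in $J$, which streamlines the independence-of-$k$ check and simultaneously fixes the naturality of the product; the middle-variable coherence, however, seems unavoidable by any route.
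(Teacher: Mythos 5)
Your strategy is genuinely different from the paper's, and as written it has a gap exactly where you flag one. The paper never invokes dimension shifting: in its hypercohomology construction an element of $\widehat{\mathrm{Ext}}_{\mathcal{C}}^m(F, H)$ is an almost chain map $F[m+n]_{\bullet} \rightarrow H[n]_{\bullet}$ modulo chain homotopy, and the Yoneda product is literally composition of almost chain maps, so well-definedness and bi-additivity are immediate; equivalently, in the naïve construction the product is the direct limit of the composition pairings
\[ \circ: [\widetilde{H}_{n+k}, \widetilde{J}_k]_{\mathcal{C}} \times [\widetilde{F}_{n+m+k}, \widetilde{H}_{n+k}]_{\mathcal{C}} \rightarrow [\widetilde{F}_{n+m+k}, \widetilde{J}_k]_{\mathcal{C}} \]
over a \emph{single} index $k$, whose transition maps shift the syzygies of $F$, $H$ and $J$ simultaneously. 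The middle variable is therefore aligned by construction, and no signs, no cofinality argument and no first-variable shifting ever enter; the paper then identifies the two descriptions via the isomorphism $\rho^{n+m}$, and Proposition~\ref{prop:dirlimcommswithtensor} converts the bilinear pairing of limits into a map on the tensor product. Your route --- the resolution construction with independent indices $k$, $\ell$, repaired by the first-variable isomorphisms $\mathrm{Ext}_{\mathcal{C}}^j(\widetilde{H}_{\ell}, C) \cong \mathrm{Ext}_{\mathcal{C}}^{j+\ell}(H, C)$ for $j \geq 1$ --- is precisely the dimension-shifting scheme underlying the older Tate--Farrell-style products which, as the discussion around Lemma~\ref{lem:tatefarrelltwo} stresses, the paper's construction is designed to avoid.

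The gap is that you state the crucial middle-variable compatibility (``these two changes cancel under Yoneda composition'') as an expectation, and your candidate map is not well defined on the direct limits until it is proved. It is closable by a standard argument you do not supply: let $\chi_{\ell} \in \mathrm{Ext}_{\mathcal{C}}^1(\widetilde{H}_{\ell}, \widetilde{H}_{\ell+1})$ be the class of $0 \rightarrow \widetilde{H}_{\ell+1} \rightarrow H_{\ell} \rightarrow \widetilde{H}_{\ell} \rightarrow 0$. Both the coefficient-variable connecting homomorphism $\mathrm{Ext}_{\mathcal{C}}^{m+\ell}(F, \widetilde{H}_{\ell}) \rightarrow \mathrm{Ext}_{\mathcal{C}}^{m+\ell+1}(F, \widetilde{H}_{\ell+1})$ and the inverse of your first-variable shift $\mathrm{Ext}_{\mathcal{C}}^{j-1}(\widetilde{H}_{\ell+1}, C) \rightarrow \mathrm{Ext}_{\mathcal{C}}^j(\widetilde{H}_{\ell}, C)$ are, up to a degree-dependent sign, Yoneda multiplication by $\chi_{\ell}$ (on the left and on the right respectively), so associativity of the ordinary Yoneda product makes the two modifications cancel; one must then still verify that the accumulated signs are consistent along the whole limit. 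With that lemma in place your construction does assemble, and your closing remark is sound: for fixed ordinary $b$, right composition $- \circ b$ commutes with connecting homomorphisms in the coefficient variable, hence is a morphism of cohomological functors and extends to the completion in $J$ by the universal property. What the comparison buys is clear, though: the paper's definition makes well-definedness essentially formal and renders associativity, the unit and compatibility with $\Phi^{\bullet}$ (Lemma~\ref{lem:cohomrings}, Proposition~\ref{prop:canisringhomom}) transparent, whereas on your route each of these properties would demand a further diagram chase through the shifting isomorphisms.
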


Our cohomology products (cup products, external products and Yoneda products) do not only exist in great generality, but also possess almost all properties one would expect. More specifically, they are natural (Lemma~\ref{lem:cohomprodsnatural}) and associative (Lemma~\ref{lem:cohomprodsassoc}). Cup products turn complete cohomology and Yoneda products turn completed Ext-functors into a graded ring with identity (Lemma~\ref{lem:cohomrings}). Lastly, external and cup products satisfy a version of graded commutativity (Proposition~\ref{prop:extprodsgradedcomm}) and are compatible with connecting homomorphisms (Lemma~\ref{lem:cohomprodsandconn}).

\subsection{Detecting finite groups and Tate--Farrell cohomology}\label{subsec:varia}

We close this introduction by highlighting two noteworthy features of complete cohomology. The first feature concerns pro-$p$ groups, which are a well-behaved class of profinite groups. For a fixed prime number $p$, they can be defined as inverse limits of finite $p$-groups and are thus a profinite version of $p$-groups. One example of a pro-$p$ group are the $p$-adic integers $\mathbb{Z}_p$ which are studied in~\cite[Section~1.5]{wil98}. There is a rich theory of the cohomology of pro-$p$ groups. The first feature is that complete cohomology detects finite groups among pro-$p$ groups.

\begin{lem} (= Lemma~\ref{lem:propgroupfinite})
A pro-$p$ group $G$ is finite if and only if
\begin{itemize}
\item $H_{\mathbb{Z}_p}^0(G, -) \ncong \widehat{H}_{\mathbb{Z}_p}^0(G, -)$ and
\item $H_{\mathbb{Z}_p}^n(G, -) \cong \widehat{H}_{\mathbb{Z}_p}^n(G, -)$ for any $n \geq 1$.
\end{itemize}
\end{lem}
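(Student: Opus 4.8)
The plan is to prove both directions by combining the vanishing theorem (Theorem~\ref{thm:vanishing}) with the defining property of Mislin completions, namely that $\widehat{H}^{\bullet}$ kills projectives and that $\Phi^{\bullet}\colon H^{\bullet} \to \widehat{H}^{\bullet}$ is the universal such morphism. I would first establish the easy direction: suppose $G$ is finite. Then $G$ has finite cohomological dimension over $\mathbb{Z}_p$ precisely when it is finite? No --- a finite group generally has \emph{infinite} cohomological dimension when it has torsion, so I must be careful here; the point is rather that for a finite group the comparison map $\Phi^n$ can be analysed degreewise. The cleaner route is to recall that for a finite pro-$p$ group $G$ (i.e.\ a finite $p$-group) the completed group ring $\mathbb{Z}_p\llbracket G\rrbracket = \mathbb{Z}_p[G]$ is a finitely generated $\mathbb{Z}_p$-module, and Tate--Farrell cohomology in the classical finite-group sense agrees with the Mislin completion (invoking the identification established earlier in the paper, Lemma~\ref{lem:tatefarrelltwo}); for finite groups ordinary Tate cohomology satisfies exactly $\widehat{H}^n \cong H^n$ for $n \geq 1$ and $\widehat{H}^0 \ncong H^0$ because the norm map has nontrivial kernel/cokernel. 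I would spell out that the degree-zero discrepancy records the Tate cohomology $\widehat{H}^0_{\mathbb{Z}_p}(G,-)$ being the cokernel of the norm, which is nonzero for nontrivial finite $p$-groups.

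For the converse, the strategy is contrapositive: assume $G$ is an infinite pro-$p$ group and deduce that the two displayed conditions cannot both hold. The key structural input is that an infinite pro-$p$ group $G$ has \emph{infinite} cohomological dimension over $\mathbb{Z}_p$ only in the torsion case; torsion-free pro-$p$ groups such as $\mathbb{Z}_p$ itself can have finite cohomological dimension (e.g.\ $\mathrm{cd}_p(\mathbb{Z}_p) = 1$). This forces a case split. If $G$ has \emph{finite} cohomological dimension over $\mathbb{Z}_p$, then by Lemma~\ref{lem:prevanishing} (part~2) the entire completed functor vanishes, $\widehat{H}^{\bullet}_{\mathbb{Z}_p}(G,-) = 0$; in particular $\widehat{H}^0 = 0$ but $H^0_{\mathbb{Z}_p}(G,-) = (-)^G \ne 0$ in general, so the comparison fails to be an isomorphism in degree zero --- however this \emph{agrees} with the first bullet, so the contradiction must instead come from the second bullet in positive degrees, where $H^n \ne 0 = \widehat{H}^n$ fails the required isomorphism. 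If $G$ has \emph{infinite} cohomological dimension, I would use Theorem~\ref{thm:vanishing} to conclude $\widehat{H}^n_{\mathbb{Z}_p}(G,-) \ne 0$ for suitable $n$, and then argue that the positive-degree isomorphisms of the second bullet cannot persist, because in high degrees ordinary cohomology of an infinite-dimensional group continues to interact nontrivially with the completion through the connecting maps.

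The main obstacle I anticipate is making the degreewise comparison between $H^n$ and $\widehat{H}^n$ precise enough to separate the two bullet conditions, since the lemma asserts a \emph{simultaneous} statement (failure in degree~$0$ together with agreement in all positive degrees). The cleanest way to handle this is to exploit the long exact sequence relating $H^{\bullet}$, $\widehat{H}^{\bullet}$ and the ``co-completion'' or fibre of $\Phi^{\bullet}$, which for a finite group degenerates so that $\Phi^n$ is an isomorphism for all $n \geq 1$ and is exactly the norm map in degree~$0$ --- this is the standard long exact sequence $\cdots \to \widehat{H}^{n-1} \to H_{n} \to H^{n} \to \widehat{H}^{n} \to \cdots$ whose Tate-style behaviour characterises the finite case. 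I would therefore organise the proof around showing that $\Phi^n$ being an isomorphism for all $n \geq 1$ together with non-isomorphism in degree~$0$ is equivalent to the vanishing of homology $H_n$ in all positive degrees except for the norm contribution, which by a standard argument (the pro-$p$ version of the fact that $\mathbb{Z}_p$ has finite projective dimension over $\mathbb{Z}_p\llbracket G\rrbracket$ iff $G$ is finite) holds precisely when $G$ is finite. The delicate point will be justifying that no infinite pro-$p$ group, whether of finite or infinite cohomological dimension, can reproduce this exact degreewise pattern, which I expect to require invoking the non-vanishing in Theorem~\ref{thm:vanishing} together with a dimension-shifting argument (Theorem~\ref{thm:dimshifting}) to propagate a single nonzero or non-isomorphic degree across all degrees.
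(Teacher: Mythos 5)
Your forward direction is essentially the paper's: for a finite $p$-group the Mislin completion agrees with classical Tate cohomology, which is isomorphic to ordinary cohomology in positive degrees and differs in degree zero. The converse, however, has a genuine gap, concentrated exactly where the real work lies. Your case split is on the cohomological dimension of $G$, and the finite-cd case is fine: Lemma~\ref{lem:prevanishing} gives $\widehat{H}_{\mathbb{Z}_p}^{\bullet}(G,-)=0$, while $H_{\mathbb{Z}_p}^{d}(G,-)\neq 0$ for $d=\mathrm{cd}(G)\geq 1$ when $G$ is infinite (indeed nontrivial), so the second bullet fails. But in the infinite-cd case you have no argument. You propose to derive a contradiction from infinite cohomological dimension together with the two bullets, via Theorem~\ref{thm:realvanishing} and an unspecified ``interaction through the connecting maps''; no argument of this shape can exist, because a nontrivial \emph{finite} $p$-group also has infinite cohomological dimension and \emph{does} satisfy both bullets. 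Whatever breaks the bullets for an infinite pro-$p$ group must use infiniteness of $G$ as a group, not infiniteness of its cohomological dimension, and your sketch never identifies such an input. Relatedly, the ``standard fact'' you invoke --- that $\mathbb{Z}_p$ has finite projective dimension over $\mathbb{Z}_p\llbracket G\rrbracket$ iff $G$ is finite --- is false in both directions: a nontrivial finite $p$-group has infinite cohomological dimension, whereas $G=\mathbb{Z}_p$ is infinite with $\mathrm{cd}(\mathbb{Z}_p)=1$. The homology--cohomology long exact sequence you appeal to is likewise a finite-group (Tate) phenomenon; the sequence actually available here (Lemma~\ref{lem:lesandcanmorphism}) involves $H^{n}(\mathrm{Bdd}_{\mathcal{C}}(A_{\bullet},B_{\bullet})_{\bullet})$ rather than group homology, and identifying the two for infinite groups is unjustified.

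What the paper does instead, and what your proposal is missing: by Proposition~\ref{prop:mislincohomgroups}, the second bullet forces $H_{\mathbb{Z}_p}^{k}(G,-)$ to vanish on projectives for all $k\geq 1$, and the first bullet then forces the existence of a projective $P$ with $H_{\mathbb{Z}_p}^{0}(G,P)=P^{G}\neq 0$; unwinding free modules and limits, this yields $(\mathbb{Z}_p\llbracket G\rrbracket)^{G}\neq 0$. Finiteness then comes from an arithmetic computation specific to pro-$p$ groups: writing $\mathbb{Z}_p\llbracket G\rrbracket=\varprojlim_{U}\mathbb{Z}_p[G/U]$, the $G$-fixed points of $\mathbb{Z}_p[G/U]$ are the constant tuples and the transition maps multiply constants by the indices $|U:V|$ (Proposition~\ref{prop:specialzeroethcohom}); if $G$ were infinite, a nonzero compatible family of fixed points would produce an element of $\mathbb{Z}_p$ divisible by arbitrarily large powers of $p$, which is impossible. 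This reduction via Proposition~\ref{prop:mislincohomgroups} and the $p$-divisibility argument are the steps your proposal would need and does not contain.
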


As the second feature, completed (unenriched) Ext-functors generalise F.\! T.\! Farrell's approach from~\cite{far77} to the greatest extent. This includes Tate--Farrell cohomology for profinite groups defined in~\cite[p.~34]{sym07} if one takes profinite modules as coefficients (Example~\ref{exl:tatefarrell}). More specifically, a complete resolution of an object $A$ in a category $\mathcal{C}$ is a particular acyclic chain complex $(\overline{A}_n)_{n \in \mathbb{Z}}$ of projective objects that agrees with a projective resolution of $A$ in sufficiently high degree~\cite[Definition~1.1]{cor97}. Following~\cite[p.~158]{far77}, we define the Tate--Farrell Ext-functor $\overline{\mathrm{Ext}}_{\mathcal{C}}^{\bullet}(A, B)$ as the cohomology of the cochain complex $\mathrm{Hom}_{\mathcal{C}}(\overline{A}_{\bullet}, B)$ where the Hom-functor is unenriched.

\begin{lem} (= Lemma~\ref{lem:tatefarrell})
The Tate--Farrell Ext-functors $\overline{\mathrm{Ext}}_{\mathcal{C}}^{\bullet}(A, -)$ are isomorphic to the completed (unenriched) Ext-functors $\widehat{\mathrm{Ext}}_{\mathcal{C}}^{\bullet}(A, -)$ as cohomological functors.
\end{lem}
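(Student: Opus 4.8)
The plan is to recognise the Tate--Farrell functor $\overline{\mathrm{Ext}}_{\mathcal{C}}^{\bullet}(A, -)$, equipped with a natural comparison morphism out of $\mathrm{Ext}_{\mathcal{C}}^{\bullet}(A, -)$, as a Mislin completion of $\mathrm{Ext}_{\mathcal{C}}^{\bullet}(A, -)$; the claimed isomorphism of cohomological functors then follows from the uniqueness of Mislin completions recorded in Section~\ref{subsec:outline}. Throughout I use that $\overline{\mathrm{Ext}}_{\mathcal{C}}^{\bullet}(A, -)$ is defined only once $A$ admits a complete resolution $\overline{A}_{\bullet}$, and that such a resolution is totally acyclic in the sense that $\mathrm{Hom}_{\mathcal{C}}(\overline{A}_{\bullet}, Q)$ is acyclic for every projective $Q$.

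First I would check that $\overline{\mathrm{Ext}}_{\mathcal{C}}^{\bullet}(A, -)$ is a cohomological functor. Since every term $\overline{A}_n$ is projective, the functor $\mathrm{Hom}_{\mathcal{C}}(\overline{A}_n, -)$ is exact, so a short exact sequence $0 \to B' \to B \to B'' \to 0$ yields a short exact sequence of cochain complexes $0 \to \mathrm{Hom}_{\mathcal{C}}(\overline{A}_{\bullet}, B') \to \mathrm{Hom}_{\mathcal{C}}(\overline{A}_{\bullet}, B) \to \mathrm{Hom}_{\mathcal{C}}(\overline{A}_{\bullet}, B'') \to 0$. The associated long exact cohomology sequence supplies the connecting homomorphisms, and the axioms of a cohomological functor together with naturality follow from the naturality of the long exact sequence; additivity is clear. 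Next I would produce the comparison morphism and the two properties that pin down the completion. The chain map from $\overline{A}_{\bullet}$ to a projective resolution $A_{\bullet}$ of $A$ witnessing agreement in high degrees induces a cochain map $\mathrm{Hom}_{\mathcal{C}}(A_{\bullet}, -) \to \mathrm{Hom}_{\mathcal{C}}(\overline{A}_{\bullet}, -)$, hence a morphism of cohomological functors $\Phi^{\bullet}: \mathrm{Ext}_{\mathcal{C}}^{\bullet}(A, -) \to \overline{\mathrm{Ext}}_{\mathcal{C}}^{\bullet}(A, -)$ natural in the second variable. Because the two complexes coincide in high degrees and degree-$n$ cohomology depends only on degrees $n-1, n, n+1$, the map $\Phi^n$ is an isomorphism for all $n$ sufficiently large; in particular $\overline{\mathrm{Ext}}_{\mathcal{C}}^n(A, M) \cong \mathrm{Ext}_{\mathcal{C}}^n(A, M)$ there. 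Total acyclicity gives $\overline{\mathrm{Ext}}_{\mathcal{C}}^n(A, Q) = 0$ for every projective $Q$, so the functor vanishes on projectives.

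It remains to verify the universal property, which I expect to be the main obstacle. Given a cohomological functor $V^{\bullet}$ vanishing on projectives and a morphism $\alpha^{\bullet}: \mathrm{Ext}_{\mathcal{C}}^{\bullet}(A, -) \to V^{\bullet}$, I would construct the factorisation $\beta^{\bullet}: \overline{\mathrm{Ext}}_{\mathcal{C}}^{\bullet}(A, -) \to V^{\bullet}$ by dimension shifting. For any cohomological functor $W^{\bullet}$ vanishing on projectives, a short exact sequence $0 \to \Omega M \to P \to M \to 0$ with $P$ projective makes the connecting map $W^n(M) \to W^{n+1}(\Omega M)$ an isomorphism, since the neighbouring terms $W^{\ast}(P)$ vanish. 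Applying this along a fixed projective resolution of $M$ to $W = \overline{\mathrm{Ext}}_{\mathcal{C}}^{\bullet}(A,-)$ and $W = V^{\bullet}$, for each pair $(n, M)$ I choose $k$ with $n + k$ in the range where $\Phi$ is invertible and set $\beta^n_M = (\delta_V^k)^{-1} \circ \alpha^{n+k}_{\Omega^k M} \circ (\Phi^{n+k}_{\Omega^k M})^{-1} \circ \delta_{\overline{\mathrm{Ext}}}^k$.

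The real work is to show that $\beta^n_M$ is independent of $k$ and of the chosen resolution, is natural in $M$, commutes with the connecting homomorphisms, satisfies $\beta^{\bullet} \circ \Phi^{\bullet} = \alpha^{\bullet}$, and is the unique such morphism. Each of these reduces to the fact that $\alpha^{\bullet}$ and $\Phi^{\bullet}$ are morphisms of cohomological functors and hence commute with the shifting isomorphisms, with uniqueness forced by invertibility of $\Phi$ in high degrees. Once this is in place, $\overline{\mathrm{Ext}}_{\mathcal{C}}^{\bullet}(A, -)$ together with $\Phi^{\bullet}$ is a Mislin completion of $\mathrm{Ext}_{\mathcal{C}}^{\bullet}(A, -)$, and uniqueness of Mislin completions yields the asserted isomorphism $\overline{\mathrm{Ext}}_{\mathcal{C}}^{\bullet}(A, -) \cong \widehat{\mathrm{Ext}}_{\mathcal{C}}^{\bullet}(A, -)$ of cohomological functors.
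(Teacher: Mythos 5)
Your proposal is correct, and its skeleton matches the paper's: exhibit $\overline{\mathrm{Ext}}_{\mathcal{C}}^{\bullet}(A,-)$, equipped with a comparison morphism out of $\mathrm{Ext}_{\mathcal{C}}^{\bullet}(A,-)$, as a Mislin completion, then invoke uniqueness of Mislin completions. The difference lies in how much you re-prove. The paper disposes of the lemma in one sentence, citing the proof of Theorem~1.2 in~\cite{cor97} for precisely the facts in your first two paragraphs (cohomological functor, vanishing on projectives, $\Phi^k$ an isomorphism in all sufficiently high degrees) and then applying Proposition~\ref{prop:detamislincompletion}, which says that such data already constitute a Mislin completion. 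Your third and fourth paragraphs are in effect a direct proof of that proposition by dimension shifting over syzygies --- essentially Mislin's original argument for Lemma~2.4 of~\cite{mis94} --- whereas the paper proves Proposition~\ref{prop:detamislincompletion} by a different route, factoring through the satellite-functor completion $T^{\bullet}\langle n \rangle$ of Proposition~\ref{prop:mislincohomgroups} and citing Lemma~3.1 of~\cite{ghe24}. What your route buys is self-containedness: no appeal to~\cite{cor97} or to the satellite machinery of the prequel. What it costs is redoing bookkeeping (independence of $k$ and of the chosen resolution, naturality, compatibility with connecting homomorphisms, uniqueness) that the paper packaged once and for all; Proposition~\ref{prop:detamislincompletion} is stated immediately before this lemma for exactly this purpose, and with it your first two paragraphs would finish the proof. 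One detail worth making explicit in your second paragraph: extending the identity in high degrees to a chain map $\overline{A}_{\bullet} \rightarrow A_{\bullet}$ defined in all degrees is exactly where total acyclicity enters, since the obstruction $d_k \circ f_k$ is a cocycle in $\mathrm{Hom}_{\mathcal{C}}(\overline{A}_{\bullet}, A_{k-1})$ with projective coefficients, hence a coboundary, which yields $f_{k-1}$.
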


\subsection{Section summary}

We showcase in Section~\ref{sec:constructions} the relevant constructions of Mislin completions that we have generalised in~\cite{ghe24} for the reader's convenience. Several properties of completed Ext-functors are established in Section~\ref{sec:complextandcan}. More specifically, we show that zeroeth completed Ext-functors detect finite projective dimension, that completed Ext-functors generalise Tate--Farrell Ext-functors and that complete cohomology detects finiteness of pro-$p$ groups. In Section~\ref{sec:dimshifteckmannshapiro}, we prove a (partial) version of dimension shifting and an Eckmann--Shapiro type lemma. To pave the way for cohomology products, we provide an overview of external products of Ext-functors and cup products of group cohomology in Section~\ref{sec:ordinarycohomprods}. Then, in Section~\ref{sec:cohomprodsexists}, we construct external and Yoneda products for completed Ext-functors and cup products for completed group cohomology. We conclude by proving the most fundamental properties of these cohomology products in Section~\ref{sec:cohomprodpropties}.

\subsection{Notation and terminology}

We adopt the convention that the natural numbers $\mathbb{N}$ include $1$, but do not include $0$. We write $\mathbb{N}_0$ for $\mathbb{N} \cup \lbrace 0 \rbrace$. Moreover, we adopt B.\! Poonen's convention from~\cite{poo18} that a ring is an abelian group together with a totally associative product, meaning a binary associative relation admitting an identity element. In particular, ring homomorphisms are understood to map identity elements to identity elements. We use the symbol $\varinjlim$ only to denote direct limits. If $\mathcal{D}$ is a category, then $\varinjlim_{\mathcal{D}}$ denotes a direct limit in this category and if $I$ is a directed set, then $\varinjlim_{i \in I}$ denotes a direct limit indexed over $I$. We write $\varinjlim_{k \in \mathbb{N}} (M_k, \mu_k)$ if $\mu_k: M_k \rightarrow M_{k+1}$ are the morphisms giving rise to the direct limit. Lastly, we use the same numbering to label diagrams and equations.

\section{Outline of constructions}\label{sec:constructions}

This section provides an overview of the constructions of Mislin completion we have generalised in~\cite{ghe24}. More specifically, these constructions originate from D.\! J.\! Benson and J.\! F.\! Carlson's paper~\cite{ben92}, from F.\! Goichot's paper~\cite{goi92} and from G.\! Mislin's paper~\cite{mis94}. In order to present these, let us axiomatically define cohomological functors. For two abelian categories $\mathcal{C}$, $\mathcal{D}$ a family of additive functors $(T^n: \mathcal{C} \rightarrow \mathcal{D})_{n \in \mathbb{Z}}$ is called a cohomological functor if it satisfies the following two axioms~\cite[p.~201--202]{kro95}.

\begin{axm}\label{axm:delta}
For every $n \in \mathbb{Z}$ and short exact sequence $0 \rightarrow A \rightarrow B \rightarrow C \rightarrow 0$ in $\mathcal{C}$, there is natural connecting homomorphism $\delta^n: T^n(C) \rightarrow T^{n+1}(A)$.
\end{axm}

Being natural means in this context that for every commuting diagram in $\mathcal{C}$
\begin{center}
\begin{tikzcd}
0 \arrow[r] & A \arrow[r] \arrow[d, "f"] & B \arrow[r] \arrow[d] & C \arrow[r] \arrow[d, "g"] & 0 \\
0 \arrow[r] & A' \arrow[r] & B' \arrow[r] & C' \arrow[r] & 0
\end{tikzcd}
\end{center}
with exact rows there is a commuting diagram
\begin{center}
\begin{tikzcd}
T^n(A) \arrow[r, "\delta^n"] \arrow[d, "T^n(g)"] & T^{n+1}(A) \arrow[d, "T^{n+1}(f)"] \\
T^n(A') \arrow[r, "\delta^n"] & T^{n+1}(C')
\end{tikzcd}
\end{center}
in $\mathcal{D}$.

\begin{axm}\label{axm:les}
For every short exact sequence $0 \rightarrow A \xrightarrow{\iota} B \xrightarrow{\pi} C \rightarrow 0$ in $\mathcal{C}$ there is a long exact sequence
\[ \dots {} \xrightarrow{T^{n-1}(\pi)} T^{n-1}(C) \xrightarrow{\delta^{n-1}} T^n(A) \xrightarrow{T^n(\iota)} T^n(B) \xrightarrow{T^n(\pi)} T^n(C) \xrightarrow{\delta^n} T^{n+1}(A) \xrightarrow{T^{n+1}(\iota)} {} \dots \]
\end{axm}

Let us axiomatically define morphisms of cohomological functors as in~\cite[p.~202]{kro95}.

\begin{axm}\label{axm:morphisms}
Let $(T^{\bullet}, \delta^{\bullet})$ and $(U^{\bullet},\varepsilon^{\bullet})$ be cohomological functors from $\mathcal{C}$ to $\mathcal{D}$. Then a family of natural transformations $(\nu^n: T^n \rightarrow U^n)_{n \in \mathbb{Z}}$ is a morphism of cohomological functors if for every $n \in \mathbb{Z}$ and any short exact sequence $0 \rightarrow A \rightarrow B \rightarrow C \rightarrow 0$ in $\mathcal{C}$, the square
\begin{center}
\begin{tikzcd}
  T^n(C) \arrow[r, "\delta^n"] \arrow[d, "\nu^n"]
    &T^{n+1}(A) \arrow[d, "\nu^{n+1}"] \\
  U^n(C) \arrow[r, "\varepsilon^n"]
    &U^{n+1}(A)
\end{tikzcd}
\end{center}
commutes.
\end{axm}

We generalise G.\! Mislin's Definition~2.1 from~\cite{mis94} to the greatest extent.

\begin{defn}[Mislin completion]\label{defn:mislincompletion}
Let $(T^{\bullet}, \delta^{\bullet})$ be a cohomological functor from $\mathcal{C}$ to $\mathcal{D}$. Then its Mislin completion is a cohomological functor $(\widehat{T}^{\bullet}, \widehat{\delta}^{\bullet})$ from $\mathcal{C}$ to $\mathcal{D}$ together with a morphism $\nu^{\bullet}: T^{\bullet} \rightarrow \widehat{T}^{\bullet}$ satisfying the following universal property: \newline
\textnormal{1.} $(\widehat{T}^{\bullet}, \widehat{\delta}^{\bullet})$ vanishes on projectives, meaning that $\widehat{T}^n(P) = 0$ for every projective object $P \in \mathrm{obj}(\mathcal{C})$ and every $n \in \mathbb{Z}$. \newline
\textnormal{2.} If $(U^{\bullet}, \varepsilon^{\bullet})$ is any cohomological functor vanishing on projectives, then each morphism $T^{\bullet} \rightarrow U^{\bullet}$ factors uniquely as $T^{\bullet} \xrightarrow{\nu^{\bullet}} \widehat{T}^{\bullet} \rightarrow U^{\bullet}$.
\end{defn}

By virtue of their universal property, Mislin completions are unique up to isomorphism in the following sense. If $(U^{\bullet}, \varepsilon^{\bullet})$ is another Mislin completion of $(T^{\bullet}, \delta^{\bullet})$, then there is an isomorphism $\mu^{\bullet}: \widehat{T}^{\bullet} \rightarrow U^{\bullet}$, meaning that $\mu^n(M): \widehat{T}^n(M) \rightarrow U^n(M)$ is an isomorphism for any $n \in \mathbb{Z}$ and $M \in \mathrm{obj}(\mathcal{C})$~\cite[p.~202]{kro95}. This allows us to state G.\! Mislin's definition from~\cite[p.~297]{mis94} in the greatest generality.

\begin{defn}[Axiomatic, Mislin]\label{defn:axiomatic}
${} \quad {}$
\begin{itemize}
\item For any $A \in \mathrm{obj}(\mathcal{C})$ extend the (enriched or unenriched) Ext-functors to a cohomological functor by setting $\mathrm{Ext}_{\mathcal{C}}^n(A, -) = 0$ for $n < 0$. Define completed Ext-functors as the Mislin completion $(\widehat{\mathrm{Ext}}_{\mathcal{C}}^{\bullet}(A, -), \widehat{\delta}^{\bullet})$.
\item Analogously, if $G$ is a group object in $\mathcal{C}$ and $R$ a ring object, then extend group cohomology to a cohomological functor $(H_R^{\bullet}(G, -), \delta^{\bullet})$ by imposing $H_R^n(G, -) = 0$ for $n < 0$. Define complete cohomology as the Mislin completion $(\widehat{H}_R^{\bullet}(G, -), \widehat{\delta}^{\bullet})$.
\end{itemize}
\end{defn}

To ensure that Mislin completions exists, we explain what direct limits are and when they are called exact.

\begin{defn}\label{defn:directlimit}
A partially ordered set $(I, \leq)$ is a directed set if for every $i, j \in I$ there is $k \in I$ such that $i,j \leq k$~\cite[p.~1]{rib10}. According to~\cite[p.~14]{rib10}, a diagram $\lbrace D_i \rbrace_{i \in I}$ in $\mathcal{D}$ indexed over a directed set is called a direct system in $\mathcal{D}$. More formally, $I$ can be turned into a category whose objects are its elements and there is a unique morphism $i \rightarrow j$ whenever $i \leq j \in I$. Then a direct system is a covariant functor $I \rightarrow \mathcal{D}, i \mapsto D_i$. A direct limit $\varinjlim_{i \in I} D_i$ in $\mathcal{D}$ is a colimit of a direct system $\lbrace D_i \rbrace_{i \in I}$. Direct limits in $\mathcal{D}$ are called exact if for every direct system of short exact sequence $\lbrace 0 \rightarrow A_i \rightarrow B_i \rightarrow C_i \rightarrow 0 \rbrace_{i \in I}$ also
\[ 0 \rightarrow \varinjlim_{i \in I} A_i \rightarrow \varinjlim_{i \in I} B_i \rightarrow \varinjlim_{i \in I} C_i \rightarrow 0 \]
is a short exact sequence~\cite[\href{https://stacks.math.columbia.edu/tag/079A}{Tag 079A}]{stacks-project}.
\end{defn}

\begin{thm}
(\cite[Theorem~3.2]{ghe24}) Let $\mathcal{C}$ be an abelian category with enough projective objects and let $\mathcal{D}$ be an abelian category in which all direct limits exist and are exact. Then for every cohomological functor $(T^{\bullet}, \delta^{\bullet})$ from $\mathcal{C}$ to $\mathcal{D}$ there exists a Mislin completion $(\widehat{T}^{\bullet}, \widehat{\delta}^{\bullet})$.
\end{thm}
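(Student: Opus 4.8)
The plan is to adapt G.\! Mislin's explicit construction via syzygies and sequential direct limits, since the hypotheses on $\mathcal{C}$ and $\mathcal{D}$ are exactly what this construction requires. For each $M \in \mathrm{obj}(\mathcal{C})$ fix a projective resolution $\cdots \to P_1 \to P_0 \to M \to 0$ and write $\Omega^0 M = M$ and $\Omega^{k} M = \ker(P_{k-1} \to P_{k-2})$ for $k \geq 1$, so that there are short exact sequences $0 \to \Omega^{k+1} M \to P_k \to \Omega^k M \to 0$. Applying Axiom~\ref{axm:delta} to these yields connecting homomorphisms $\delta^{n+k}: T^{n+k}(\Omega^k M) \to T^{n+k+1}(\Omega^{k+1} M)$, which I would take as the structure maps of a direct system indexed over $\mathbb{N}_0$, and then define
\[ \widehat{T}^n(M) := \varinjlim_{k} T^{n+k}(\Omega^k M), \]
letting $\nu^n(M): T^n(M) \to \widehat{T}^n(M)$ be the canonical map from the term at $k = 0$. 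Only countable (sequential) direct limits are used, so these objects exist in $\mathcal{D}$ by hypothesis, and additivity of $\widehat{T}^n$ is inherited from $T^\bullet$ since direct limits commute with finite biproducts.

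The first substantial step is to show that $\widehat{T}^n$ is a functor independent of all choices. A morphism $f: M \to N$ lifts to a chain map of the chosen resolutions and hence to maps $\Omega^k f: \Omega^k M \to \Omega^k N$; the ambiguity between different lifts factors through projective summands and is washed out in the colimit, so $\widehat{T}^n(f)$ is well defined, and the comparison theorem for projective resolutions together with Schanuel's lemma shows the colimit is independent of the resolution up to canonical isomorphism. Next I would equip $\widehat{T}^\bullet$ with connecting homomorphisms and establish the long exact sequence. Given $0 \to A \to B \to C \to 0$, the horseshoe lemma produces compatible resolutions fitting into short exact sequences of syzygies $0 \to \Omega^k A \to \Omega^k B \to \Omega^k C \to 0$ for every $k$. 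Axiom~\ref{axm:les} gives a long exact sequence at each level $k$; assembling these into a direct system and invoking the exactness of direct limits in $\mathcal{D}$ shows that the colimit
\[ \cdots \to \widehat{T}^n(A) \to \widehat{T}^n(B) \to \widehat{T}^n(C) \xrightarrow{\widehat{\delta}^n} \widehat{T}^{n+1}(A) \to \cdots \]
is again exact, with $\widehat{\delta}^n$ the limit of the maps $\delta^{n+k}: T^{n+k}(\Omega^k C) \to T^{n+k+1}(\Omega^k A)$. Naturality of $\widehat{\delta}^\bullet$ and Axiom~\ref{axm:morphisms} for the induced morphisms are then inherited from $T^\bullet$ in the limit.

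Vanishing on projectives is immediate: for projective $P$ one may take the resolution $0 \to P \xrightarrow{\mathrm{id}} P \to 0$, so $\Omega^k P = 0$ for $k \geq 1$ and the direct system is $T^n(P) \to 0 \to 0 \to \cdots$, with colimit $0$. For the universal property the key observation is that a cohomological functor $(U^\bullet, \varepsilon^\bullet)$ that vanishes on projectives is already its own completion: applying Axiom~\ref{axm:les} to $0 \to \Omega M \to P \to M \to 0$ with $U^\bullet(P) = 0$ forces each $\varepsilon: U^n(M) \to U^{n+1}(\Omega M)$ to be an isomorphism, whence $\nu_U: U^\bullet \to \widehat{U}^\bullet$ is an isomorphism. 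Given any $\phi^\bullet: T^\bullet \to U^\bullet$, applying $\phi$ termwise to the defining systems yields $\widehat{\phi}^\bullet: \widehat{T}^\bullet \to \widehat{U}^\bullet$ with $\widehat{\phi}^\bullet \circ \nu_T = \nu_U \circ \phi^\bullet$, and I take the factorisation to be $\nu_U^{-1} \circ \widehat{\phi}^\bullet$. Uniqueness follows because $\widehat{T}^\bullet$ itself vanishes on projectives, so its iterated connecting maps are isomorphisms and every element of $\widehat{T}^n(M)$ lies in the image of $\nu_T$ transported by these isomorphisms; two factorisations that are morphisms of cohomological functors and agree after $\nu_T$ must therefore coincide.

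The main obstacle is the coherence underlying $\widehat{\delta}$. Two families of connecting maps interact: the \emph{horizontal} ones from $0 \to A \to B \to C \to 0$ lifted to the syzygies, and the \emph{vertical} ones defining the direct systems from the syzygy sequences. For $\delta^{n+k}: T^{n+k}(\Omega^k C) \to T^{n+k+1}(\Omega^k A)$ to be a morphism of the two direct systems, and hence to descend to $\widehat{\delta}^n$, I must verify that the squares
\[ \begin{array}{ccc}
T^{n+k}(\Omega^k C) & \longrightarrow & T^{n+k+1}(\Omega^k A) \\
\downarrow & & \downarrow \\
T^{n+k+1}(\Omega^{k+1} C) & \longrightarrow & T^{n+k+2}(\Omega^{k+1} A)
\end{array} \]
commute up to the standard sign, where the horizontal arrows are the horizontal connecting maps and the vertical arrows the syzygy connecting maps. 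This is precisely the anticommutativity of connecting homomorphisms in the $3 \times 3$ diagram assembled from the horseshoe resolution, and carefully establishing it — together with confirming that all the choices made (resolutions, lifts, horseshoe splittings) become irrelevant after passing to the colimit — is where the bulk of the technical effort lies.
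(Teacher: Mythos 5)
Your proposal is correct in outline, but it does not follow the route of the paper's cited proof. \cite[Theorem~3.2]{ghe24} establishes existence via the \emph{satellite functor construction}, $\widehat{T}^n(M) = \varinjlim_{k} S^{-k}T^{n+k}(M)$, whereas you build the completion from syzygies of chosen projective resolutions, i.e.\ the \emph{resolution construction} of Definition~\ref{defn:resolsinoutline}; in~\cite{ghe24} the agreement of that construction with the Mislin completion is a separate, later result (Theorem~4.6 there). The trade-off is exactly the one you flag in your final paragraph. Left satellite functors are, by the Cartan--Eilenberg theory invoked in Section~\ref{sec:constructions}, well-defined functors independent of the choice of short exact sequence, and the maps $S^{-k}\underline{\delta}^{n+k}$ are natural transformations; so in the satellite approach functoriality, independence of choices, and the cohomological-functor structure of $\widehat{T}^{\bullet}$ need no resolution-level bookkeeping, and the universal property falls out of a tower of partial completions $T^{\bullet} \rightarrow T^{\bullet}\langle n \rangle$ (this is precisely how the present paper exploits~\cite{ghe24} in Propositions~\ref{prop:mislincohomgroups} and~\ref{prop:detamislincompletion}). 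Your route makes vanishing on projectives transparent --- the paper makes the same remark about Definition~\ref{defn:resolsinoutline} --- but forces you to verify by hand everything the satellite formalism hides: well-definedness of $\widehat{T}^n(f)$, the horseshoe comparison, and above all the sign coherence between horizontal and vertical connecting maps, which you correctly identify as anticommutativity of iterated connecting homomorphisms in the $3 \times 3$ diagram but leave unproven. That last point is not optional decoration: without inserting signs (say, rescaling the structure maps of one system by $(-1)^k$, which yields an isomorphic direct system and hence the same colimit), the levelwise long exact sequences do not literally form a direct system, and exactness of countable direct limits in $\mathcal{D}$ cannot be applied to produce the long exact sequence for $\widehat{T}^{\bullet}$.

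One further step needs precision. The ambiguity between two lifts of $f: M \rightarrow N$ is not killed because it ``factors through projective summands'' --- $T^{\bullet}$ is not assumed to vanish on projectives, so $T^{n+k}$ of such a factorisation need not vanish. Rather, the difference of the induced syzygy maps factors as $\widetilde{M}_k \rightarrow N_k \xrightarrow{\pi_k} \widetilde{N}_k$ through the epimorphism $\pi_k$ from the projective term of the resolution of $N$, and Axiom~\ref{axm:les} applied to $0 \rightarrow \widetilde{N}_{k+1} \rightarrow N_k \xrightarrow{\pi_k} \widetilde{N}_k \rightarrow 0$ gives $\delta^{n+k} \circ T^{n+k}(\pi_k) = 0$; it is composition with the \emph{next structure map} of the system that annihilates the discrepancy. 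With this mechanism and the sign convention made explicit, your argument is a complete existence proof by the resolution construction, parallel to but technically heavier than the satellite-functor proof the paper cites.
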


\begin{notn}
For the rest of the paper, $\mathcal{C}$ always denotes an abelian category with enough projectives and $\mathcal{D}$ an abelian category in which all countable direct limits exist and are exact.
\end{notn}

In order to clarify where the above assumptions are needed, we present what we term the satellite functor construction, which is due to G.\! Mislin. First, we introduce left satellite functors. Using the above assumption that $\mathcal{C}$ has enough projective objects, there is for any $M \in \mathrm{obj}(\mathcal{C})$ a short exact sequence $0 \rightarrow K \rightarrow P \rightarrow M \rightarrow 0$ in $\mathcal{C}$ with $P$ projective. For a cohomological functor $(T^{\bullet}, \delta^{\bullet})$, we define the zeroeth left satellite functor of $T^n$ as $S^0T^n := T^n$, the first left satellite functor as
\[ S^{-1}T^n(M) := \mathrm{Ker}\big(T^n(K) \rightarrow T^n(P)\big) \]
and the $k^{\text{th}}$ left satellite functor as $S^{-k}T^n := S^{-1}(S^{-k+1}T^n)$ for $k \geq 2$~\cite[p.~36]{car56}. It is shown in~\cite[Section~III.1]{car56} that left satellite functors do not depend on the choice of short exact sequence. Since they have been defined as kernels, it follows from Axiom~\ref{axm:les} that $\delta^n: T^n \rightarrow T^{n+1}$ induces a morphism $\underline{\delta}^n: T^n(M) \rightarrow S^{-1}T^{n+1}(M)$ and therefore
\[ S^{-k}\underline{\delta}^{n+k}: S^{-k}T^{n+k}(M) \rightarrow S^{-k-1}T^{n+k+1}(M) \]
for any $k \in \mathbb{N}$~\cite[p.~207--208]{kro95}. We extend G.\! Mislin's construction from~\cite[p.~293]{mis94} by explicitly using our assumption that all countable  direct limits exist in the codomain category $\mathcal{D}$ of $T^{\bullet}$.

\begin{defn}[Satellite functor construction, Mislin]
The Mislin completion of a cohomological functor $(T^{\bullet}, \delta^{\bullet})$ can be defined as
\[ \widehat{T}^n(M) := \varinjlim_{k \in \mathbb{N}_0} (S^{-k}T^{n+k}(M), S^{-k}\underline{\delta}^{n+k}) \]
for any $M \in \mathrm{obj}(\mathcal{C})$ and $n \in \mathbb{Z}$. Accordingly,
\[ \widehat{H}_R^n(G, M) := \varinjlim_{k \in \mathbb{N}_0} (S^{-k}H_R^{n+k}(G, M), S^{-k}\underline{\delta}^{n+k}) \]
is a definition of complete cohomology.
\end{defn}

In order that the above forms a cohomological functor, we require the very assumption that all direct limits in $\mathcal{D}$ are exact. \\

Let us go over to what we term the resolution construction that occurs in~\cite[Lemma~B.3]{cel17} and can be retrieved from page~299 in G.\! Mislin's paper~\cite{mis94}. If $(M_n)_{n \in \mathbb{N}_0}$ is a projective resolution of $M \in \mathrm{obj}(\mathcal{C})$, let us define $\widetilde{M}_0 := M$ and $\widetilde{M}_k := \mathrm{Ker}(M_{k-1} \rightarrow \widetilde{M}_{k-1})$ for $k \in \mathbb{N}$. This is called the $k^{\text{th}}$ syzygy of $M_{\bullet}$ in the Gorenstein context~\cite[p.~89]{pag16}. The choice of our notation is meant to reflect that our syzygies do not necessarily arise from a specific choice of projective resolution as in \cite{kro95} and \cite{mis94}. Since for every $k \in \mathbb{N}_0$ we have the short exact sequence $0 \rightarrow \widetilde{M}_{k+1} \rightarrow M_k \rightarrow \widetilde{M}_k \rightarrow 0$, there is a connecting homomorphism $\delta^{n+k}: T^{n+k}(\widetilde{M}_k) \rightarrow T^{n+k+1}(\widetilde{M}_{k+1})$ for every $n \in \mathbb{Z}$. Then the following definition makes it more apparent why Mislin completions vanish on projective objects.

\begin{defn}[Resolution construction, Mislin]\label{defn:resolsinoutline}
The Mislin completion of a cohomological functor $(T^{\bullet}, \delta^{\bullet})$ can be defined as 
\[ \widehat{T}^n(M) := \varinjlim_{k \in \mathbb{N}_0} (T^{n+k}(\widetilde{M}_k), \delta^{n+k}) \]
for any $n \in \mathbb{Z}$ and $M \in \mathrm{obj}(\mathcal{C})$. Accordingly,
\[ \widehat{H}_R^n(G, M) := \varinjlim_{k \in \mathbb{N}_0} (H_R^{n+k}(G, \widetilde{M}_k), \delta^{n+k}) \]
is a definition of complete cohomology.
\end{defn}

The next two constructions only give rise to completed unenriched Ext-functors. Let $A_{\bullet}$, $B_{\bullet}$ are projective resolutions of $A, B \in \mathrm{obj}(\mathcal{C})$ and let $\widetilde{f}_{n+k}: \widetilde{A}_{n+k} \rightarrow \widetilde{B}_k$ be a morphism for $n \in \mathbb{Z}$ and $k \in \mathbb{N}_0$ such that $n+k \geq 0$. Then we can write the commuting diagram
\begin{center}
\begin{tikzcd}
    0 \arrow[r] & \widetilde{A}_{n+k+1} \arrow[r] \arrow[d, "\widetilde{f}_{k+1}"] & A_{n+k} \arrow[r] \arrow[d, "f_{k+1}"] & \widetilde{A}_{n+k} \arrow[r] \arrow[d, "\widetilde{f}_k"] & 0 \\
    0 \arrow[r] & \widetilde{B}_{k+1} \arrow[r] & B_{k+1} \arrow[r] & \widetilde{B}_k \arrow[r] & 0
\end{tikzcd}
\end{center}
whose terms arise as follows. Since the bottom row is exact and the term $A_{n+k}$ projective, there is a lift $f_k$ of $\widetilde{f}_k$ making the right-hand square commute. Because $\widetilde{B}_{k+1} \rightarrow B_{k+1}$ is a kernel, there is a morphism $\widetilde{f}_{k+1}$ making the left-hand side commute. If $\mathrm{Hom}_{\mathcal{C}}(-, -)$ denotes the (unenriched) Hom-functor in $\mathcal{C}$, then $\mathrm{Hom}_{\mathcal{C}}(\widetilde{A}_{n+k}, \widetilde{B}_k)$ is an abelian group by virtue of $\mathcal{C}$ being an abelian category. We define $\mathcal{P}_{\mathcal{C}}(\widetilde{A}_{n+k}, \widetilde{B}_k)$ to be the subgroup of $\mathrm{Hom}_{\mathcal{C}}(\widetilde{A}_{n+k}, \widetilde{B}_k)$ consisting of all morphisms factoring through a projective object and write the quotient as $[\widetilde{A}_{n+k}, \widetilde{B}_k]_{\mathcal{C}} := \mathrm{Hom}_{\mathcal{C}}(\widetilde{A}_{n+k}, \widetilde{B}_k)/ \mathcal{P}_{\mathcal{C}}(\widetilde{A}_{n+k}, \widetilde{B}_k)$~\cite[p.~203]{kro95}. As in the case of modules over a ring covered by~\cite[p.~204]{kro95}, one can prove that
\begin{align*}
t_{\widetilde{A}_{n+k}, \widetilde{B}_k}: [\widetilde{A}_{n+k}, \widetilde{B}_k]_{\mathcal{C}} &\rightarrow [\widetilde{A}_{n+k+1}, \widetilde{B}_{k+1}]_{\mathcal{C}}, \\
\widetilde{f}_k + \mathcal{P}_{\mathcal{C}}(\widetilde{A}_{n+k}, \widetilde{B}_k) &\mapsto \widetilde{f}_{k+1} + \mathcal{P}_{\mathcal{C}}(\widetilde{A}_{n+k+1}, \widetilde{B}_{k+1})
\end{align*}
is a well defined homomorphism. Using this, we generalise the following construction from D.\! J.\! Benson and J.\! F.\! Carlson's paper~\cite[p.~109]{ben92}.

\begin{defn}[Naïve construction, Benson \& Carlson]
For any $n \in \mathbb{Z}$, we can define the $n^{\text{th}}$ completed unenriched Ext-functor as
\[ \widehat{\mathrm{Ext}}_{\mathcal{C}}^n(A, B) := \varinjlim_{k \in \mathbb{N}_0, n+k \geq 0} ([\widetilde{A}_{n+k}, \widetilde{B}_k]_{\mathcal{C}}, t_{\widetilde{A}_{n+k}, \widetilde{B}_k}) \, . \]
In particular, if $R_{\bullet}$ is a projective $R[G]$-resolution of $R \in \mathrm{obj}(Mod_R(G))$, we can define complete unenriched cohomology as
\[ \widehat{H}_R^n(G, B) := \varinjlim_{k \in \mathbb{N}_0, n+k \geq 0} ([\widetilde{R}_{n+k}, \widetilde{B}_k]_{\mathcal{C}}, t_{\widetilde{R}_{n+k}, \widetilde{B}_k}) \, . \]
\end{defn}

Lastly, we present what we call the hypercohomology construction of complete cohomology. We define the chain complex $(A_n')_{n \in \mathbb{Z}}$ by $A_n' = A_n$ for $n \geq 0$ and $A_n' = 0$ for $n < 0$ and similarly $(B_n')_{n \in \mathbb{Z}}$~\cite[p.~209]{kro95}. Define the hypercohomology complex $(\mathrm{Hyp}_{\mathcal{C}}(A_{\bullet}', B_{\bullet}')_n, d^n)_{n \in \mathbb{Z}}$ by having $n$-cochains 
\[ \mathrm{Hyp}_{\mathcal{C}}(A_{\bullet}', B_{\bullet}')_n = \prod_{k \in \mathbb{Z}} \mathrm{Hom}_{\mathcal{C}}(A_{k+n}', B_k') \, . \]
To ease notation in the following, we view abelian groups as $\mathbb{Z}$-modules. If we denote by $a_n: A_n' \rightarrow A_{n-1}'$ and $b_n: B_n' \rightarrow B_{n-1}'$ the differentials induced from the respective projective resolution, we define for $n \in \mathbb{Z}$ the differential
\begin{align}
d^n: \mathrm{Hyp}_{\mathcal{C}}(A_{\bullet}', B_{\bullet}')_n &\rightarrow \mathrm{Hyp}_{\mathcal{C}}(A_{\bullet}', B_{\bullet}')_{n+1} \label{eq:hypercohombd} \\ (\varphi_{n+k})_{k \in \mathbb{Z}} &\mapsto (b_{k+1} \circ \varphi_{n+k+1} - (-1)^n \varphi_{n+k} \circ a_{n+k+1})_{k \in \mathbb{Z}} \nonumber
\end{align}
Let us define the bounded complex $\mathrm{Bdd}_{\mathcal{C}}(A_{\bullet}', B_{\bullet}')_{n \in \mathbb{Z}}$ as the subcomplex of $\mathrm{Hyp}_{\mathcal{C}}(A_{\bullet}', B_{\bullet}')_{n \in \mathbb{Z}}$ given by
\[ \mathrm{Bdd}_{\mathcal{C}}(A_{\bullet}', B_{\bullet}')_n = \bigoplus_{k \in \mathbb{Z}} \mathrm{Hom}_{\mathcal{C}}(A_{k+n}', B_k') \, . \]
Define the Vogel complex as the quotient complex~\cite[p.~209]{kro95}
\[ \mathrm{Vog}_{\mathcal{C}}(A_{\bullet}', B_{\bullet}')_{n \in \mathbb{Z}} := \Big(\mathrm{Hyp}_{\mathcal{C}}(A_{\bullet}', B_{\bullet}')_n / \mathrm{Bdd}_{\mathcal{C}}(A_{\bullet}', B_{\bullet}')_n \Big)_{n \in \mathbb{Z}} \, . \]
By this, we generalise Definition~1.2 from F.\! Goichot's paper~\cite{goi92} where he attributes it to P.\! Vogel on page~39.

\begin{defn}[Hypercohomology construction, Vogel]\label{defn:vogel}
For $n \in \mathbb{Z}$ we can define the $n^{\text{th}}$ completed unenriched Ext-functor as
\[ \widehat{\mathrm{Ext}}_{\mathcal{C}}^n(A, B) := H^n((\mathrm{Vog}_{\mathcal{C}}(A_{\bullet}', B_{\bullet}')_k, d^k)_{k \in \mathbb{Z}}) \, . \]
We can thus define complete unenriched cohomology as
\[ \widehat{H}_R^n(G, M) := H^n((\mathrm{Vog}_R(R_{\bullet}', B_{\bullet}')_k, d^k)_{k \in \mathbb{Z}})\, . \]
\end{defn}

\section{Completed Ext-functors and canonical morphisms}\label{sec:complextandcan}

This section is dedicated to properties of completed Ext-functors and their canonical morphisms. We show that zeroeth completed unenriched Ext-functors detect finite projective dimension. We show that completed unenriched Ext-functors generalise Tate--Farrell Ext-functors. We demonstrate that complete unenriched group cohomology detects when a pro-$p$ group is finite. Lastly, we prove that the terms of the canonical morphism from unerniched Ext-functors to their Mislin completion fit into a long exact sequence relating three distinct cohomological functors. \\

Recall from Section~\ref{subsec:findim} that an object in a category is said to have finite projective dimension if it admit a projective resolution of finite length. Remember that group cohomology as well as complete cohomology can be defined as a specific (completed) Ext-functor. Then we re-establish Lemma~4.2.3 from~\cite{kro95} in greater generality where we refer the reader to~\cite[Section~7]{ghe24} for an implementation of complete cohomology into condensed mathematics.

\begin{lem}\label{lem:prevanishing}
\begin{enumerate}
    \item Assume that $T^{\bullet}: \mathcal{C} \rightarrow \mathcal{D}$ is a cohomological functor where $\mathcal{C}$ has enough projectives and in $\mathcal{D}$ all countable direct limits exist and are exact. If $M \in \mathrm{obj}(\mathcal{C})$ has finite projective dimension, then $\widehat{T}^n(M) = 0$ for every $n \in \mathbb{Z}$. In particular, if every object in $\mathcal{C}$ has finite projective dimension such as in a category of modules over a ring of finite global dimension, then $\widehat{T}^{\bullet} = 0$ for any cohomological functor $T^{\bullet}$.
    \item If one takes \textbf{enriched} Ext-functors $\mathrm{Ext}_{\mathcal{C}}^n(A, -): \mathcal{C} \rightarrow \mathcal{D}$ with $A \in \mathrm{obj}(\mathcal{C})$ of finite projective dimension, then $\widehat{\mathrm{Ext}}_{\mathcal{C}}^n(A, -) = 0$ for every $n \in \mathbb{Z}$. In particular, complete cohomology $\widehat{H}_R^{\bullet}(G, M)$ vanishes if the group object $G$ has finite cohomological dimension or $M \in Mod_R(G)$ has finite projective dimension. As this applies to any condensed group $G$, this holds for any $T1$ topological group.
\end{enumerate}
\end{lem}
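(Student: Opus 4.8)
The plan is to compute the Mislin completion through the resolution construction of Definition~\ref{defn:resolsinoutline}, which is permissible since Mislin completions are unique up to isomorphism, and then to exhibit the direct system appearing there as eventually zero so that its countable direct limit vanishes.

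For the first assertion I would use that finite projective dimension of $M$, say equal to $d$, lets me fix a projective resolution of length $d$ and pad it with zero objects in all higher degrees. Tracing the syzygies $\widetilde{M}_k$ from Section~\ref{sec:constructions} down the short exact sequences $0 \to \widetilde{M}_{k+1} \to M_k \to \widetilde{M}_k \to 0$, one checks that $\widetilde{M}_d \cong M_d$ with the surjection $M_d \to \widetilde{M}_d$ an isomorphism, whence $\widetilde{M}_k = 0$ for every $k > d$. Because each $T^{n+k}$ is additive and therefore sends the zero object to the zero object, the direct system $(T^{n+k}(\widetilde{M}_k), \delta^{n+k})_{k \in \mathbb{N}_0}$ has all terms of index $k > d$ equal to zero. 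Since the tail $\{\, k > d \,\}$ is cofinal in $\mathbb{N}_0$ and a direct limit agrees with the direct limit over any cofinal subsystem, I conclude $\widehat{T}^n(M) = 0$ for every $n \in \mathbb{Z}$. The concluding clause is then immediate, as the hypothesis forces every term of the system to vanish for all $M$.

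For the second assertion the completion is taken in the \emph{second} variable, so the syzygy vanishing above is unavailable; instead I would appeal to the classical vanishing of Ext above the projective dimension. Resolving the fixed object $A$ by a projective resolution of length $d = \mathrm{pd}(A)$ and applying the enriched Hom-functor shows $\mathrm{Ext}^m_{\mathcal{C}}(A, -) = 0$ for all $m > d$, the enriched Hom-complex being concentrated in degrees $\le d$. Computing $\widehat{\mathrm{Ext}}^n_{\mathcal{C}}(A, B)$ by the resolution construction applied to $B$ realises it as the direct limit of $(\mathrm{Ext}^{n+k}_{\mathcal{C}}(A, \widetilde{B}_k))_{k}$, and every term with $n + k > d$ vanishes; the same cofinality argument then gives $\widehat{\mathrm{Ext}}^n_{\mathcal{C}}(A, -) = 0$ for all $n$.

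Finally I would deduce the statements on complete cohomology by specialising, writing complete group cohomology as the completed Ext-functor $\widehat{\mathrm{Ext}}^{\bullet}(R, -)$ over the relevant (completed) group ring. If $G$ has finite cohomological dimension over $R$, then $R$ has finite projective dimension as a module object over the group ring, so the second assertion with $A = R$ applies; if instead the coefficient object $M$ has finite projective dimension, the first assertion applied to $T^{\bullet} = H_R^{\bullet}(G, -)$ applies. Since the standing hypotheses on $\mathcal{C}$ and $\mathcal{D}$ are met by condensed modules through the construction of the previous paper, this covers every condensed group and hence every $T1$ topological group. I expect the only point requiring genuine care to be the bookkeeping that identifies the high syzygies, respectively the high Ext-groups, as zero so that the eventually-zero direct-limit argument applies; the remainder is formal.
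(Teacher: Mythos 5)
Your proposal is correct and takes essentially the same route as the paper: both compute the Mislin completion via the resolution construction and observe that the direct system is eventually zero --- through the vanishing of the syzygies $\widetilde{M}_k$ beyond the projective dimension for the first assertion, and through the classical vanishing of $\mathrm{Ext}_{\mathcal{C}}^{n+k}(A,-)$ for $n+k$ above $\mathrm{pd}(A)$ for the second. The bookkeeping you single out (identifying the high syzygies as zero and the cofinality of the tail) is precisely what the paper's shorter proof leaves implicit, and your specialisation to complete cohomology matches the intended deduction.
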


\begin{proof}
If $M$ has finite projective dimension, then there is $m \in \mathbb{N}_0$ such that $\widetilde{M}_k = 0$ for any $k \geq m$. In particular, $T^{n+k}(\widetilde{M}_k) = 0$ for any $k \geq m$ and $\widehat{T}^n(M) = 0$ according to the resolution construction. On the other hand, if $A$ has finite projective dimension, then there $m' \in \mathbb{N}_0$ such that $\mathrm{Ext}_{\mathcal{C}}^{n+k}(A, -) = 0$ for any $n+k \geq m'$. We conclude as before that $\widehat{\mathrm{Ext}}_{\mathcal{C}}^n(A, -) = 0$.
\end{proof}

This allows us to re-establish the following version of a theorem that appears in the literature as~\cite[Proposition~IX.1.3]{bel07}, \cite[Theorem~4.11]{guo23}, \cite[Theorem~3.10]{hu21} and~\cite[Lemma~4.2.4]{kro95}.

\begin{thm}\label{thm:realvanishing}
If $\widehat{\mathrm{Ext}}_{\mathcal{C}}^{\bullet}(A, -): \mathcal{C} \rightarrow \mathbf{Ab}$ denote completed unenriched Ext-functors for $A \in \mathrm{obj}(\mathcal{C})$, then the following are equivalent.
\begin{enumerate}
\item The object $A$ has finite projective dimension.
\item $\widehat{\mathrm{Ext}}_{\mathcal{C}}^n(A, -) = \widehat{\mathrm{Ext}}_{\mathcal{C}}^n(-, A) = 0$ for any $n \in \mathbb{Z}$.
\item $\widehat{\mathrm{Ext}}_{\mathcal{C}}^0(A, A) = 0$.
\end{enumerate}
In particular, the zeroeth complete cohomology group detects whether a group has finite cohomological dimension. This applies to any condensed group and thus to any $T1$ topological group.
\end{thm}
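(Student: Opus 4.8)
The plan is to establish the cycle of implications $(1) \Rightarrow (2) \Rightarrow (3) \Rightarrow (1)$, working throughout with the naïve construction of completed unenriched Ext-functors. That construction realises $\widehat{\mathrm{Ext}}_{\mathcal{C}}^n(A, B)$ as the direct limit $\varinjlim_{k} [\widetilde{A}_{n+k}, \widetilde{B}_k]_{\mathcal{C}}$ of stable Hom-groups, which is the description best suited to tracking the identity morphism.

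For $(1) \Rightarrow (2)$, suppose $A$ has finite projective dimension, so that there is $m \in \mathbb{N}_0$ with $\widetilde{A}_j = 0$ for all $j \geq m$. In $\widehat{\mathrm{Ext}}_{\mathcal{C}}^n(A, B)$ the object $A$ sits in the first slot, so every term $[\widetilde{A}_{n+k}, \widetilde{B}_k]_{\mathcal{C}}$ with $n+k \geq m$ vanishes; the direct limit is then zero, giving $\widehat{\mathrm{Ext}}_{\mathcal{C}}^n(A, -) = 0$. Placing $A$ in the second slot instead, the terms $[\widetilde{B}_{n+k}, \widetilde{A}_k]_{\mathcal{C}}$ vanish once $k \geq m$, so $\widehat{\mathrm{Ext}}_{\mathcal{C}}^n(-, A) = 0$ for all $n$; this is the unenriched counterpart of part~2 of Lemma~\ref{lem:prevanishing}. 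The implication $(2) \Rightarrow (3)$ is immediate on specialising $n = 0$ and the second variable to $A$.

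The content of the theorem is $(3) \Rightarrow (1)$. First I would observe that $\widehat{\mathrm{Ext}}_{\mathcal{C}}^0(A, A) = \varinjlim_k [\widetilde{A}_k, \widetilde{A}_k]_{\mathcal{C}}$ carries a distinguished class, that of the family $(\mathrm{id}_{\widetilde{A}_k})_k$: in the diagram defining the transition map, the identity of $\widetilde{A}_k$ lifts to the identity of $A_k$ and hence induces the identity of $\widetilde{A}_{k+1}$, so $t_{\widetilde{A}_k, \widetilde{A}_k}$ carries $[\mathrm{id}_{\widetilde{A}_k}]$ to $[\mathrm{id}_{\widetilde{A}_{k+1}}]$ and the family is compatible. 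Assuming $\widehat{\mathrm{Ext}}_{\mathcal{C}}^0(A, A) = 0$, this class vanishes, and since an element of a direct limit dies only if it dies at a finite stage, there is some $k \in \mathbb{N}_0$ with $[\mathrm{id}_{\widetilde{A}_k}] = 0$ in $[\widetilde{A}_k, \widetilde{A}_k]_{\mathcal{C}}$. By definition of the stable Hom-group this means $\mathrm{id}_{\widetilde{A}_k} \in \mathcal{P}_{\mathcal{C}}(\widetilde{A}_k, \widetilde{A}_k)$, i.e. it factors as $\widetilde{A}_k \to P \to \widetilde{A}_k$ through a projective $P$. Thus $\widetilde{A}_k$ is a retract of $P$, and since idempotents split in the abelian category $\mathcal{C}$ it is a direct summand of $P$, hence itself projective.

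To finish I would splice: with $\widetilde{A}_k$ projective, the short exact sequences $0 \to \widetilde{A}_{j+1} \to A_j \to \widetilde{A}_j \to 0$ for $0 \leq j \leq k-1$ assemble into a finite projective resolution $0 \to \widetilde{A}_k \to A_{k-1} \to \cdots \to A_0 \to A \to 0$, so $A$ has projective dimension at most $k$. The main obstacle is the heart of $(3) \Rightarrow (1)$: pinning down the identity class in the direct limit, checking its compatibility under the transition maps, and passing from ``$\mathrm{id}_{\widetilde{A}_k}$ factors through a projective'' to ``$\widetilde{A}_k$ is projective''. Everything else is either a direct reading of the naïve construction or the standard truncation of a resolution.
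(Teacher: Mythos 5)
Your proposal is correct and follows essentially the same route as the paper: the paper likewise reduces everything to the implication $(3) \Rightarrow (1)$ and proves it via the naïve construction, locating a stage $k$ at which $\mathrm{id}_{\widetilde{A}_k} + \mathcal{P}_{\mathcal{C}}(\widetilde{A}_k, \widetilde{A}_k) = 0$, concluding that $\mathrm{id}_{\widetilde{A}_k}$ factors through a projective, hence that $\widetilde{A}_k$ is projective and $A$ has finite projective dimension; your extra details (compatibility of the identity classes under the transition maps, the retract argument, the splicing) are exactly what the paper leaves implicit. The only difference is cosmetic: for $(1) \Rightarrow (2)$ you argue directly with the naïve construction, where the paper instead invokes Lemma~\ref{lem:prevanishing} (the resolution construction); both are the same eventually-vanishing-terms argument.
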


Given the torsion-theoretic framework in which A.\! Beligiannis and I.\! Reiten work, this theorem follows from their definitions in~\cite{bel07}. On the other hand, S.\! Guo and L.\! Liang in~\cite{guo23} as well as J.\! Hu et al.\! in~\cite{hu21} prove this theorem using the hypercohomology construction. For completeness, we generalise a proof by the naïve construction found in~\cite[p.~205]{kro95}.

\begin{proof}
As the second statement implies the third, it suffices by Lemma~\ref{lem:prevanishing} to prove that $A$ has finite projective dimension if $\widehat{\mathrm{Ext}}_{\mathcal{C}}^0(A, A) = 0$. By the construction of direct limits of abelian groups from~\cite[p.~261]{osb00}, there is $k \in \mathbb{N}_0$ such that $\mathrm{id}_{\widetilde{A}_k} + \mathcal{P}_{\mathcal{C}}(\widetilde{A}_k, \widetilde{A}_k) = 0$ in $[\widetilde{A}_k, \widetilde{A}_k]_{\mathcal{C}}$. Because unenriched Hom-functors are used, we conclude that $\mathrm{id}_{\widetilde{A}_k}$ factors through a projective. In particular, $\widetilde{A}_k$ is projective and $A$ has finite projective dimension.
\end{proof}

To prove that completed Ext-functors generalise Tate--Farrell Ext-functors, we require a result on when the terms of a cohomological functors agree with the ones of its Mislin completion. The following is a generalisation of Lemma~2.3 in~\cite{mis94}.

\begin{prop}\label{prop:mislincohomgroups}
Let $T^{\bullet}: \mathcal{C} \rightarrow \mathcal{D}$ be a cohomological functor. Then for $n \in \mathbb{Z}$ the following are equivalent.
\begin{enumerate}
\item For any $k \geq n$ the functor $T^k$ vanishes on projective objects.
\item For any $k \geq n$ the functor $T^k$ is naturally isomorphic to $\widehat{T}^k$.
\end{enumerate}
\end{prop}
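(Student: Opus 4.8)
The plan is to prove the two implications separately, using the resolution construction of the Mislin completion (Definition~\ref{defn:resolsinoutline}) together with the canonical morphism $\nu^{\bullet}: T^{\bullet} \to \widehat{T}^{\bullet}$ supplied by Definition~\ref{defn:mislincompletion}, whose $k$-th component is realised in the resolution construction as the structure map from the $j = 0$ term of the defining colimit.

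First I would dispatch the implication (2) $\Rightarrow$ (1), which is immediate. If $T^k$ is naturally isomorphic to $\widehat{T}^k$ for every $k \geq n$, then for any projective $P \in \mathrm{obj}(\mathcal{C})$ we have $T^k(P) \cong \widehat{T}^k(P) = 0$ by the defining vanishing property of a Mislin completion, so $T^k$ vanishes on projectives.

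The substance lies in (1) $\Rightarrow$ (2). Fix $k \geq n$ and $M \in \mathrm{obj}(\mathcal{C})$, and express the completion via the resolution construction as
\[ \widehat{T}^k(M) = \varinjlim_{j \in \mathbb{N}_0} \big( T^{k+j}(\widetilde{M}_j), \delta^{k+j} \big), \]
where $\widetilde{M}_0 = M$ and $\nu^k(M)$ is the structure map out of the $j = 0$ term. For each $j \geq 0$ the short exact sequence $0 \to \widetilde{M}_{j+1} \to M_j \to \widetilde{M}_j \to 0$ with $M_j$ projective yields, via Axiom~\ref{axm:les}, the exact fragment
\[ T^{k+j}(M_j) \to T^{k+j}(\widetilde{M}_j) \xrightarrow{\delta^{k+j}} T^{k+j+1}(\widetilde{M}_{j+1}) \to T^{k+j+1}(M_j). \]
Since $k \geq n$ and $j \geq 0$, both indices $k+j$ and $k+j+1$ satisfy $k+j, k+j+1 \geq n$, so hypothesis (1) forces the two outer terms $T^{k+j}(M_j)$ and $T^{k+j+1}(M_j)$ to vanish. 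Hence every transition map $\delta^{k+j}$ in the direct system is an isomorphism.

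The key step is then the elementary observation that the structure map of a direct limit of a sequence of isomorphisms, emanating from its initial term, is itself an isomorphism; applying this gives that $\nu^k(M): T^k(M) \to \widehat{T}^k(M)$ is an isomorphism. Because $\nu^k$ is a morphism of cohomological functors and hence a natural transformation, these componentwise isomorphisms assemble into a natural isomorphism $T^k \cong \widehat{T}^k$ for every $k \geq n$. The main obstacle to watch is the index bookkeeping—namely confirming that the vanishing hypothesis genuinely covers \emph{both} projective terms $T^{k+j}(M_j)$ and $T^{k+j+1}(M_j)$ appearing in the long exact sequence for every $j \geq 0$—but this is guaranteed precisely by the lower bound $k \geq n$, which is why the equivalence is stated for the tail of degrees rather than a single degree.
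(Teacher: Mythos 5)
Your proposal is correct, but it follows a genuinely different route from the paper's own proof. The paper argues by universal property: it takes an arbitrary morphism $T^{\bullet} \rightarrow U^{\bullet}$ to a cohomological functor vanishing on projectives, factors it uniquely through the truncation $T^{\bullet}\langle n \rangle$ supplied by the satellite functor construction of~\cite{ghe24}, observes that hypothesis (1) makes $T^{\bullet}\langle n \rangle$ itself vanish on projectives, and concludes that $T^{\bullet}\langle n \rangle$ \emph{is} a Mislin completion, whence $T^k = T^k\langle n \rangle \cong \widehat{T}^k$ for $k \geq n$. You instead compute directly inside the resolution construction: the long exact sequence for $0 \rightarrow \widetilde{M}_{j+1} \rightarrow M_j \rightarrow \widetilde{M}_j \rightarrow 0$ sandwiches each transition map $\delta^{k+j}$ between the vanishing terms $T^{k+j}(M_j)$ and $T^{k+j+1}(M_j)$, so every map in the defining direct system is an isomorphism and the structure map out of the $j=0$ term is one too; your index bookkeeping ($k+j, k+j+1 \geq n$ whenever $k \geq n$, $j \geq 0$) is exactly right. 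Each approach buys something: the paper's argument produces the factorisation through $T^{\bullet}\langle n \rangle$ that is reused verbatim in the proof of Proposition~\ref{prop:detamislincompletion}, while yours is more elementary and yields the sharper conclusion that the isomorphism is realised by the canonical morphism $\nu^k$ itself, not merely some natural isomorphism. Do note that your argument leans on two facts imported from~\cite{ghe24} — that the resolution construction computes the Mislin completion of a general cohomological functor, and that $\nu^k(M)$ is identified with the structure map from the initial term of the colimit — the latter identification is the one the paper itself invokes (via Diagram~4.5 of~\cite{ghe24}) in the proof of Proposition~\ref{prop:quotinducescan}, so your reliance on it is legitimate but worth citing explicitly; with it, naturality follows as you say, since a morphism of cohomological functors with invertible components is a natural isomorphism.
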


\begin{proof}
As the second assertion implies the first, assume that $T^k$ vanishes on projective objects for any $k \geq n$. Let $T^{\bullet} \rightarrow U^{\bullet}$ be a morphism of cohomological functors where $U^{\bullet}$ vanishes on projective objects. By Lemma~3.1 and the subsequent satellite functor construction found in~\cite{ghe24}, this morphism factors uniquely as $T^{\bullet} \rightarrow T^{\bullet}\langle n \rangle \rightarrow U^{\bullet}$. Because $T^{\bullet}\langle n \rangle$ vanishes on projectives by our assumptions, we infer that it is a Mislin completion. In particular, we observe for any $k \geq n$ that $T^k = T^k \langle n \rangle \cong \widehat{T}^k$.
\end{proof}

We also require a criterion for when a cohomological functor vanishing on projectives is a Mislin completion. The following is a generalisation of Lemma~2.4 in~\cite{mis94}.

\begin{prop}\label{prop:detamislincompletion}
Let $T^{\bullet}, V^{\bullet}: \mathcal{C} \rightarrow \mathcal{D}$ be cohomological functors where $V^{\bullet}$ vanishes on projective objects. Assume that $\Phi^{\bullet}: T^{\bullet} \rightarrow V^{\bullet}$ is a morphism such that there is $n \in \mathbb{Z}$ with the property that $\Phi^k$ is an isomorphism for any $k \geq n$. Then $V^{\bullet}$ together with $\Phi^{\bullet}$ forms a Mislin completion of $T^{\bullet}$.
\end{prop}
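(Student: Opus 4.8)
The plan is to compare $V^{\bullet}$ with the Mislin completion $\widehat{T}^{\bullet}$ of $T^{\bullet}$, whose existence is guaranteed by our standing assumptions, and to show that the comparison morphism is an isomorphism. Since $V^{\bullet}$ vanishes on projectives by hypothesis, the universal property of $\widehat{T}^{\bullet}$ produces a unique factorisation $\Phi^{\bullet} = \psi^{\bullet} \circ \nu^{\bullet}$ through the canonical morphism $\nu^{\bullet}: T^{\bullet} \to \widehat{T}^{\bullet}$, for a unique morphism of cohomological functors $\psi^{\bullet}: \widehat{T}^{\bullet} \to V^{\bullet}$. It then suffices to prove that every $\psi^k$ is a natural isomorphism: for then $(V^{\bullet}, \Phi^{\bullet})$ inherits the universal property of $(\widehat{T}^{\bullet}, \nu^{\bullet})$ along $\psi^{\bullet}$ (one transports the factorisation of any morphism $T^{\bullet} \to U^{\bullet}$ through $\psi^{\bullet}$) and is therefore itself a Mislin completion, as recorded after Definition~\ref{defn:mislincompletion}.

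Next I would settle the high degrees $k \geq n$. For such $k$ the isomorphism $\Phi^k$ identifies $T^k$ with $V^k$, and since $V^{\bullet}$ vanishes on projectives this forces $T^k$ to vanish on projectives for every $k \geq n$. Proposition~\ref{prop:mislincohomgroups} then makes the canonical morphism $\nu^k: T^k \to \widehat{T}^k$ an isomorphism for all $k \geq n$. Reading $\psi^k = \Phi^k \circ (\nu^k)^{-1}$ off the factorisation shows that $\psi^k$ is an isomorphism for every $k \geq n$.

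The crux is descending to the low degrees $k < n$ by dimension shifting. For an arbitrary $M \in \mathrm{obj}(\mathcal{C})$ I would choose a short exact sequence $0 \to \widetilde{M}_1 \to P \to M \to 0$ with $P$ projective. As both $\widehat{T}^{\bullet}$ and $V^{\bullet}$ vanish on $P$, the long exact sequences of Axiom~\ref{axm:les} collapse, so that the connecting homomorphisms $\widehat{T}^{k}(M) \to \widehat{T}^{k+1}(\widetilde{M}_1)$ and $V^{k}(M) \to V^{k+1}(\widetilde{M}_1)$ are isomorphisms for every $k$. Because $\psi^{\bullet}$ is a morphism of cohomological functors, Axiom~\ref{axm:morphisms} yields a commuting square relating $\psi^{k}(M)$ to $\psi^{k+1}(\widetilde{M}_1)$ whose two horizontal maps are precisely these isomorphisms; hence $\psi^{k}(M)$ is an isomorphism whenever $\psi^{k+1}(\widetilde{M}_1)$ is. Starting from the degrees $k \geq n$ already handled and applying this equivalence as a downward induction on $k$, I obtain that $\psi^{k}$ is an isomorphism in every degree and for every object, which finishes the argument.

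The formal steps—the factorisation and the transport of the universal property along $\psi^{\bullet}$—are routine, and the verification in high degrees is immediate from Proposition~\ref{prop:mislincohomgroups}. The main obstacle is organising the dimension-shifting induction so that the connecting-homomorphism isomorphisms and the naturality square of Axiom~\ref{axm:morphisms} interlock correctly; once that square is set up, the downward induction is purely formal.
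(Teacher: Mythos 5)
Your proposal is correct and follows essentially the same route as the paper: factor $\Phi^{\bullet}$ through a Mislin completion via the universal property, observe that the factored morphism is an isomorphism in degrees $k \geq n$ using Proposition~\ref{prop:mislincohomgroups}, and upgrade this to an isomorphism in all degrees. The only difference is that the paper outsources the final descent to Lemma~3.1 of~\cite{ghe24} (and works with the truncated completion $T^{\bullet}\langle n \rangle$ rather than $\widehat{T}^{\bullet}$), whereas you reprove that step inline by the projective-syzygy dimension-shifting induction, which is precisely the content of the cited lemma.
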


\begin{proof}
Note that $T^k$ vanishes on projectives for any $k \geq n$. It follows from the proof of Proposition~\ref{prop:mislincohomgroups} that there is a unique factorisation $\Phi^{\bullet}: T^{\bullet} \rightarrow T^{\bullet}\langle n \rangle \xrightarrow{\Psi^{\bullet}} V^{\bullet}$ where $T^{\bullet}\langle n \rangle$ is the Mislin completion of $T^{\bullet}$. In particular, $\Psi^k = \Phi^k$ is an isomorphism for any $k \geq n$. Therefore, $\Psi^{\bullet}$ is an isomorphism of cohomological functors by Lemma~3.1 in~\cite{ghe24} and $V^{\bullet}$ is a Mislin completion.
\end{proof}

To define Tate--Farrell Ext-functors, a complete resolution of $A \in \mathrm{obj}(\mathcal{C})$ is an acyclic chain complex $(\overline{A}_n)_{n \in \mathbb{Z}}$ of projectives satisfying the following two properties~\cite[Definition~1.1]{cor97}.
\begin{itemize}
\item There is $n \in \mathbb{N}_0$ for which $(\overline{A}_k)_{k \geq n}$ agrees with a projective resolution of $A$.
\item If $\mathrm{Hom}_{\mathcal{C}}(-, -)$ denotes the unenriched Hom-functor, then the cochain complex $\mathrm{Hom}_{\mathcal{C}}(\overline{A}_{\bullet}, P)$ is acyclic for any projective $P \in \mathrm{obj}(\mathcal{C})$.
\end{itemize}
In accordance with~\cite[p.~158]{far77}, we define $\overline{\mathrm{Ext}}_{\mathcal{C}}^{\bullet}(A, B) := H^{\bullet}(\mathrm{Hom}_{\mathcal{C}}(\overline{A}_{\bullet}, B))$. By~\cite[Lemma~2.4]{cor97}, any two complete resolutions are chain homotopic and thus, Tate--Farrell Ext-functors do not depend on the choice of a complete resolution. In~\cite[Proposition~4.15]{guo23}, S.\! Guo and L.\! Liang demonstrate that the Tate--Farrell Ext-functor $\overline{\mathrm{Ext}}_{\mathcal{C}}^n(A, -)$ is naturally isomorphic to $\widehat{\mathrm{Ext}}_{\mathcal{C}}^n(A, -)$ for every $n \in \mathbb{Z}$. We strengthen their result to an isomorphism of cohomological functors and thus obtain the following version of Theorem~4.6 from~\cite{hu21}.

\begin{lem}\label{lem:tatefarrell}
The Tate--Farrell Ext-functors $\overline{\mathrm{Ext}}_{\mathcal{C}}^{\bullet}(A, -)$ are isomorphic to the completed unenriched Ext-functors $\widehat{\mathrm{Ext}}_{\mathcal{C}}^{\bullet}(A, -)$ as cohomological functors.
\end{lem}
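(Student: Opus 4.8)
The plan is to exhibit $\overline{\mathrm{Ext}}_{\mathcal{C}}^{\bullet}(A, -)$ as a Mislin completion of the ordinary Ext-functor $\mathrm{Ext}_{\mathcal{C}}^{\bullet}(A, -)$ and then invoke uniqueness of Mislin completions. Rather than upgrading the degreewise natural isomorphism of Guo--Liang by hand---which would require checking compatibility with connecting homomorphisms in every degree---I would route everything through Proposition~\ref{prop:detamislincompletion}, which only demands a morphism of cohomological functors that is an isomorphism in sufficiently high degrees. This turns the desired ``strengthening'' into an essentially formal consequence of the universal property.

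First I would verify that $\overline{\mathrm{Ext}}_{\mathcal{C}}^{\bullet}(A, -)$ is a cohomological functor in the second variable. Since every $\overline{A}_k$ is projective, the functor $\mathrm{Hom}_{\mathcal{C}}(\overline{A}_k, -)$ is exact, so any short exact sequence $0 \to B' \to B \to B'' \to 0$ yields a short exact sequence of cochain complexes $0 \to \mathrm{Hom}_{\mathcal{C}}(\overline{A}_{\bullet}, B') \to \mathrm{Hom}_{\mathcal{C}}(\overline{A}_{\bullet}, B) \to \mathrm{Hom}_{\mathcal{C}}(\overline{A}_{\bullet}, B'') \to 0$. The associated long exact cohomology sequence supplies the connecting homomorphisms and verifies Axioms~\ref{axm:delta} and~\ref{axm:les}, with naturality inherited from the naturality of the snake-lemma connecting map. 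The defining acyclicity property of a complete resolution, namely that $\mathrm{Hom}_{\mathcal{C}}(\overline{A}_{\bullet}, P)$ is acyclic for projective $P$, then gives $\overline{\mathrm{Ext}}_{\mathcal{C}}^n(A, P) = 0$ for all $n$, so this cohomological functor vanishes on projectives.

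Next I would build the comparison morphism $\Phi^{\bullet}$. Writing $A_{\bullet}$ for the projective resolution with which $(\overline{A}_k)_{k \geq n_0}$ agrees, the brutal-truncation projection $\tau_{\bullet}: \overline{A}_{\bullet} \to A_{\bullet}$ (the identity in degrees $\geq n_0$ and zero below) is a chain map independent of the coefficients. Applying $\mathrm{Hom}_{\mathcal{C}}(-, B)$ gives a cochain map $\mathrm{Hom}_{\mathcal{C}}(A_{\bullet}, B) \to \mathrm{Hom}_{\mathcal{C}}(\overline{A}_{\bullet}, B)$ and hence $\Phi^n: \mathrm{Ext}_{\mathcal{C}}^n(A, B) \to \overline{\mathrm{Ext}}_{\mathcal{C}}^n(A, B)$ (the zero map for $n < 0$, where the source vanishes). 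Because $\tau_{\bullet}$ is a single fixed chain map, applying it to the short exact sequence of Hom-complexes above produces a commuting ladder of short exact sequences of cochain complexes; naturality of the connecting homomorphism then forces the squares of Axiom~\ref{axm:morphisms} to commute, so $\Phi^{\bullet}$ is a morphism of cohomological functors. In degrees $k > n_0$ the two complexes literally coincide and $\tau_{\bullet}$ is the identity, so $\Phi^k$ is an isomorphism for all large $k$.

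With these ingredients Proposition~\ref{prop:detamislincompletion} applies verbatim: $\overline{\mathrm{Ext}}_{\mathcal{C}}^{\bullet}(A, -)$ vanishes on projectives and $\Phi^{\bullet}$ is an isomorphism in high degrees, so $(\overline{\mathrm{Ext}}_{\mathcal{C}}^{\bullet}(A, -), \Phi^{\bullet})$ is a Mislin completion of $\mathrm{Ext}_{\mathcal{C}}^{\bullet}(A, -)$. Since $\widehat{\mathrm{Ext}}_{\mathcal{C}}^{\bullet}(A, -)$ is a Mislin completion by Definition~\ref{defn:axiomatic}, uniqueness of Mislin completions up to isomorphism of cohomological functors yields the claimed isomorphism. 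I expect the main obstacle to be the bookkeeping that makes $\Phi^{\bullet}$ a genuine morphism of cohomological functors---pinning down $\tau_{\bullet}$ as an honest chain map across the truncation boundary and confirming that the resulting ladder intertwines the two families of connecting homomorphisms; once that is in place, the universal-property machinery does the rest.
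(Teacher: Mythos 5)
Your overall strategy is exactly the route the paper sketches: verify that $\overline{\mathrm{Ext}}_{\mathcal{C}}^{\bullet}(A,-)$ is a cohomological functor vanishing on projectives, produce a morphism $\Phi^{\bullet}$ from $\mathrm{Ext}_{\mathcal{C}}^{\bullet}(A,-)$ that is an isomorphism in high degrees, and invoke Proposition~\ref{prop:detamislincompletion} plus uniqueness of Mislin completions (the paper's proof is literally the remark that this follows from the proof of Theorem~1.2 in~\cite{cor97} together with that proposition). However, there is a genuine gap at the heart of your construction: the brutal truncation $\tau_{\bullet}: \overline{A}_{\bullet} \rightarrow A_{\bullet}$ is \emph{not} a chain map. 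At the truncation boundary, commutativity of the square involving $\overline{d}_{n_0}: \overline{A}_{n_0} \rightarrow \overline{A}_{n_0-1}$ and $d_{n_0}: A_{n_0} \rightarrow A_{n_0-1}$ demands $d_{n_0} = d_{n_0} \circ \tau_{n_0} = \tau_{n_0-1} \circ \overline{d}_{n_0} = 0$, which fails whenever $n_0 \geq 1$; and one cannot in general arrange $n_0 = 0$, since a complete resolution agreeing with a projective resolution in \emph{all} nonnegative degrees exists only for objects that are essentially Gorenstein projective (for Farrell's motivating examples, $n_0$ is the virtual cohomological dimension). Everything downstream in your argument --- the ladder of short exact sequences of Hom-complexes, compatibility with connecting homomorphisms, the isomorphisms in high degrees --- rests on this nonexistent map.

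The missing idea, which is the actual content of the cited proof in~\cite{cor97}, is that the identity in degrees $\geq n_0$ must be \emph{extended downwards degree by degree} to an honest chain map $\tau_{\bullet}: \overline{A}_{\bullet} \rightarrow A_{\bullet}'$, and this is precisely where the second axiom of complete resolutions enters. Concretely, $d_{n_0} \circ \tau_{n_0} = d_{n_0}$ vanishes on $\mathrm{Ker}(\overline{d}_{n_0}) = \mathrm{Im}(\overline{d}_{n_0+1}) = \mathrm{Ker}(d_{n_0})$, hence factors through $\mathrm{Im}(\overline{d}_{n_0})$; acyclicity of $\mathrm{Hom}_{\mathcal{C}}(\overline{A}_{\bullet}, P)$ for projective $P$, applied to $P = A_{n_0-1}$ and the short exact sequence $0 \rightarrow \mathrm{Im}(\overline{d}_{n_0}) \rightarrow \overline{A}_{n_0-1} \rightarrow \mathrm{Im}(\overline{d}_{n_0-1}) \rightarrow 0$, is exactly what lets the factored map extend over $\overline{A}_{n_0-1}$, yielding $\tau_{n_0-1}$; one then iterates down to negative degrees, where the extended-by-zero target makes the conditions vacuous. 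It is telling that your proposal uses the total-acyclicity axiom only to get vanishing on projectives and never in building $\Phi^{\bullet}$: without that axiom the lemma is false, because an acyclic complex of projectives agreeing with a resolution in high degrees but with $\mathrm{Hom}_{\mathcal{C}}(\overline{A}_{\bullet}, P)$ non-acyclic defines a functor that fails to vanish on some projective, so it cannot be the Mislin completion. Once $\tau_{\bullet}$ is constructed by this downward induction, the remainder of your argument (the ladder argument for Axiom~\ref{axm:morphisms}, isomorphisms in degrees above $n_0$, Proposition~\ref{prop:detamislincompletion}, uniqueness) goes through as you describe.
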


Apart from the proof found in~\cite{hu21}, one can demonstrate this by using the proof of Theorem~1.2 in~\cite{cor97} together with Proposition~\ref{prop:detamislincompletion}. As indicated at the very start of Subsection~\ref{subsec:expo}, complete cohomology generalises Tate--Farrell cohomology for any group of finite virtual cohomological dimension. For the following example, we recall from Subsection~\ref{subsec:outline} that profinite groups are defined as Galois groups or equivalently, as inverse limits of finite groups.

\begin{exl}\label{exl:tatefarrell}
P.\! Symonds constructs in~\cite[p.~34]{sym07} Tate--Farrell cohomology for a profinite group with an open subgroup of finite cohomological dimension taking coefficients in profinite modules. Thus, complete cohomology generalises his Tate--Farrell cohomology.
\end{exl}

We generalise another result from discrete groups to profinite groups by proving that complete cohomology detects finite groups among pro-$p$ groups. For a fixed prime number $p$, we recall from Subsection~\ref{subsec:varia} that pro-$p$ groups are inverse limits of finite $p$-groups and are thus a profinite version of $p$-groups. As we are investigating cohomology of profinite groups, recall the completed group ring $R{\llbracket}G{\rrbracket}$ of a profinite group $G$ over a profinite ring $R$ from Subsection~\ref{subsec:shapiroshifting}. We require the following description of $H_R^0(G, M)$.

\begin{prop}\label{prop:zeroethgaloiscohom}
(\cite[Lemma~6.2.1]{rib10}) Let $G$ be a profinite group and $R$ a profinite commutative ring. For any discrete or profinite $R{\llbracket}G{\rrbracket}$-module $M$ define
\[ M^G := \lbrace m \in M \mid \forall g \in G: g \cdot m = m \rbrace \, . \]
Then $H_R^0(G, M) \cong \mathrm{Hom}_{R{\llbracket}G{\rrbracket}}(R, M) \cong M^G$.
\end{prop}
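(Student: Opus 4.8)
The plan is to prove the two isomorphisms separately, deriving the first from the definition of group cohomology and the second from an explicit evaluation map. For the first isomorphism, recall that group cohomology $H_R^{\bullet}(G, -)$ is by Definition~\ref{defn:axiomatic} the Ext-functor $\mathrm{Ext}_{R{\llbracket}G{\rrbracket}}^{\bullet}(R, -)$, where $R$ carries the trivial $R{\llbracket}G{\rrbracket}$-module structure. Choosing a projective resolution $\cdots \rightarrow P_1 \rightarrow P_0 \rightarrow R \rightarrow 0$ in the category of $R{\llbracket}G{\rrbracket}$-modules (which exists since this category has enough projectives), the group $H_R^0(G, M)$ is the zeroth cohomology of $\mathrm{Hom}_{R{\llbracket}G{\rrbracket}}(P_{\bullet}, M)$, that is, the kernel of $\mathrm{Hom}_{R{\llbracket}G{\rrbracket}}(P_0, M) \rightarrow \mathrm{Hom}_{R{\llbracket}G{\rrbracket}}(P_1, M)$. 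Applying the left-exact functor $\mathrm{Hom}_{R{\llbracket}G{\rrbracket}}(-, M)$ to the right-exact tail $P_1 \rightarrow P_0 \rightarrow R \rightarrow 0$ identifies this kernel with $\mathrm{Hom}_{R{\llbracket}G{\rrbracket}}(R, M)$, which gives the first isomorphism.

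For the second isomorphism, I would introduce the augmentation $\varepsilon: R{\llbracket}G{\rrbracket} \rightarrow R$, $g \mapsto 1$, which realises the trivial module structure on $R$, and then consider the evaluation map $\mathrm{ev}_1: \mathrm{Hom}_{R{\llbracket}G{\rrbracket}}(R, M) \rightarrow M$, $\varphi \mapsto \varphi(1)$. For $R{\llbracket}G{\rrbracket}$-linear $\varphi$ and $g \in G$ one has $g \cdot \varphi(1) = \varphi(g \cdot 1) = \varphi(1)$, so $\mathrm{ev}_1$ lands in $M^G$; since $1$ generates $R$ as an $R{\llbracket}G{\rrbracket}$-module (because $\varepsilon$ is surjective, whence $R{\llbracket}G{\rrbracket} \cdot 1 = R$), the map $\mathrm{ev}_1$ is injective. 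For surjectivity I would send $m \in M^G$ to the map $\varphi_m: r \mapsto r \cdot m$; it then remains to verify that $\varphi_m$ is a continuous $R{\llbracket}G{\rrbracket}$-module homomorphism.

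This last verification is where the main obstacle lies, and it is purely topological. Reducing $R{\llbracket}G{\rrbracket}$-linearity of $\varphi_m$ to the identity $\lambda \cdot m = \varepsilon(\lambda) \cdot m$ for every $\lambda \in R{\llbracket}G{\rrbracket}$, I would first check this for $\lambda$ a finite $R$-linear combination of group elements, where it follows at once from $g \cdot m = m$ and from commutativity of the $R$- and $G$-actions, and then extend to all of $R{\llbracket}G{\rrbracket}$ by a density argument: the images of the finite group algebras are dense in the completed group ring $R{\llbracket}G{\rrbracket} = \varprojlim_{(i,j)} R_i[G_j]$, the module action $R{\llbracket}G{\rrbracket} \times M \rightarrow M$ is continuous, and the fixed-point set $M^G = \bigcap_{g \in G} \ker(m \mapsto g \cdot m - m)$ is closed, so the identity propagates from a dense subset to the whole ring. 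The same continuity of scalar multiplication $R \times M \rightarrow M$ yields continuity of $\varphi_m$. This density-and-closedness argument must be run uniformly for both discrete and profinite $M$, which is the delicate point; once it is in place, $\mathrm{ev}_1$ and $m \mapsto \varphi_m$ are seen to be mutually inverse and continuous $R$-module maps, completing the isomorphism $\mathrm{Hom}_{R{\llbracket}G{\rrbracket}}(R, M) \cong M^G$.
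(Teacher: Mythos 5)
Your proof is correct, but there is nothing in the paper to compare it against: the paper does not prove Proposition~3.5 at all, it imports the statement wholesale from Ribes--Zalesskii as~\cite[Lemma~6.2.1]{rib10}. What you have written is essentially the standard argument that the cited source supplies. Your first step (identifying $H_R^0(G,M)$ with $\mathrm{Hom}_{R{\llbracket}G{\rrbracket}}(R,M)$ as the kernel of $\mathrm{Hom}_{R{\llbracket}G{\rrbracket}}(P_0,M) \rightarrow \mathrm{Hom}_{R{\llbracket}G{\rrbracket}}(P_1,M)$ via left exactness of the contravariant Hom applied to $P_1 \rightarrow P_0 \rightarrow R \rightarrow 0$) is exactly the zeroth-derived-functor computation, and it is valid in both the profinite and the discrete coefficient setting, since in either case the relevant Hom-functor is left exact and the resolution is taken by profinite projectives. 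Your second step is also sound, with one small imprecision worth flagging: what the density argument actually uses is not the closedness of $M^G$ but the closedness of the \emph{equaliser} of the two continuous maps $\lambda \mapsto \lambda \cdot m$ and $\lambda \mapsto \varepsilon(\lambda) \cdot m$ from $R{\llbracket}G{\rrbracket}$ to the Hausdorff space $M$ (discrete or profinite), which contains the dense image of the abstract group algebra $R[G]$ and hence is everything. With that reading, $\mathrm{ev}_1$ and $m \mapsto \varphi_m$ are mutually inverse as you claim, and continuity of $\varphi_m$ follows from continuity of the scalar action restricted along the closed embedding $R \hookrightarrow R{\llbracket}G{\rrbracket}$. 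So: no gap, but also no divergence from the literature proof the paper leans on --- your contribution is a correct reconstruction of it.
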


We restrict our attention to the $p$-adic integers $R = \mathbb{Z}_p$ which are an important example of a pro-$p$ ring~\cite[Section~1.5]{wil98}. In the case of the $\mathbb{Z}_p{\llbracket}G{\rrbracket}$-module $\mathbb{Z}_p[G/U]$ with $U \trianglelefteq G$ open, we provide a more explicit description of $H_{\mathbb{Z}_p}^0(G, \mathbb{Z}_p[G/U])$.

\begin{prop}\label{prop:specialzeroethcohom}
For any profinite group $G$ and any open subgroup $U \trianglelefteq G$ we have
\[ \mathbb{Z}_p[G/U]^G = \Big\lbrace (b_y)_{y \in G/U} \in \prod_{y \in G/U} \mathbb{Z}_p \mid \exists b \in \mathbb{Z}_p \, \forall y \in G/U: b_y = b \Big\rbrace \, . \]
If $V \trianglelefteq U \trianglelefteq  G$ is another open subgroup and $G/V \rightarrow G/U$ denotes the projection homomorphism, then the induced homomorphism is given by
\[ \mathbb{Z}_p[G/V] \rightarrow \mathbb{Z}_p[G/U], (b)_{y \in G/V} \rightarrow (|U:V| \cdot b)_{y' \in G/U} \, . \]
\end{prop}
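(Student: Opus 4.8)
The statement to prove is Proposition~\ref{prop:specialzeroethcohom}, which computes the $G$-invariants of the permutation module $\mathbb{Z}_p[G/U]$ and describes the transfer/projection map between two such modules. Let me sketch how I would prove this.

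The first part is a direct unpacking of the definition of invariants combined with Proposition~\ref{prop:zeroethgaloiscohom}. The module $\mathbb{Z}_p[G/U]$ has the cosets $\{yU : y \in G/U\}$ as a topological basis, so an element is a tuple $(b_y)_{y \in G/U}$ with $b_y \in \mathbb{Z}_p$. The group $G$ acts by permuting the cosets, i.e.\ $g \cdot (b_y)_y = (b_{g^{-1}y})_y$. Since $G$ acts \emph{transitively} on $G/U$, an element is fixed by all of $G$ exactly when all its coordinates coincide, which is precisely the claimed description. I would phrase this as: an invariant element must have $b_{gy} = b_y$ for all $g \in G$ and all cosets $y$, and transitivity forces a single common value $b \in \mathbb{Z}_p$.

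\begin{prop} \label{prop:specialzeroethcohomproof}
For any profinite group $G$ and any open subgroup $U \trianglelefteq G$ we have
\[ \mathbb{Z}_p[G/U]^G = \Big\lbrace (b_y)_{y \in G/U} \in \prod_{y \in G/U} \mathbb{Z}_p \mid \exists b \in \mathbb{Z}_p \, \forall y \in G/U: b_y = b \Big\rbrace \, . \]
\end{prop}

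\begin{proof}[Plan]
\textbf{First part.} The plan is to unpack the definition of invariants. By Proposition~\ref{prop:zeroethgaloiscohom} we have $H_{\mathbb{Z}_p}^0(G, \mathbb{Z}_p[G/U]) \cong \mathbb{Z}_p[G/U]^G$, so it suffices to identify the fixed points directly. Writing an element as a tuple $(b_y)_{y \in G/U}$ indexed by the cosets, the $G$-action permutes coordinates via $g \cdot (b_y)_y = (b_{g^{-1}y})_y$. An element is fixed precisely when $b_{gy} = b_y$ for all $g \in G$ and all $y \in G/U$. Since $G$ acts transitively on the coset space $G/U$, any two coordinates are related by some group element, forcing all $b_y$ to equal a common value $b \in \mathbb{Z}_p$. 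This yields exactly the claimed set, and conversely every constant tuple is manifestly invariant.

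\textbf{Second part.} For the induced map, I would identify the projection $\pi: G/V \rightarrow G/U$ with its $\mathbb{Z}_p$-linear extension $\mathbb{Z}_p[G/V] \rightarrow \mathbb{Z}_p[G/U]$ arising functorially from $V \trianglelefteq U$. The key point is to determine which map on invariants this induces and to compute the multiplicity factor. Each coset $yU \in G/U$ decomposes into the fibre $\pi^{-1}(yU)$, which consists of exactly $|U:V|$ cosets in $G/V$ because $V \trianglelefteq U$ is open of finite index. The map on group rings sends a basis element $yV$ to $\pi(y)U = yU$; applying this to a constant invariant tuple $(b)_{y \in G/V}$, each of the $|U:V|$ cosets in a fibre maps onto the same target coset, so the coefficients add up. Hence the image of the constant tuple with value $b$ is the tuple with each coordinate equal to $|U:V| \cdot b$, as claimed.

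The main obstacle I anticipate is bookkeeping the finite-index arithmetic cleanly: one must verify that $V \trianglelefteq U$ being open forces $|U:V|$ to be finite (so the fibre is genuinely finite and the sum makes sense) and that the transfer is given by this honest sum of $|U:V|$ terms rather than some twisted average. This is a routine consequence of $V$ being open in the profinite group $U$, but it should be stated explicitly so that the multiplication-by-$|U:V|$ factor is justified rather than asserted.
\end{proof}
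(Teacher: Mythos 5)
Your proposal is correct and follows essentially the same route as the paper: unpack invariance as coordinate-wise equality under the permutation action, use transitivity of $G$ on $G/U$ to force a single common value, and compute the induced map by summing coefficients over the fibres of $G/V \rightarrow G/U$, whose size $|U:V|$ is the order of the kernel $U/V$. The only cosmetic difference is that the paper additionally justifies the passage from equality of tuples to equality of individual coefficients by embedding $\mathbb{Z}_p[G/U]$ into the $\mathbb{Q}_p$-vector space $\mathbb{Q}_p^{|G:U|}$, a step that is immaterial here since the statement already identifies $\mathbb{Z}_p[G/U]$ with the product $\prod_{y \in G/U} \mathbb{Z}_p$.
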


\begin{proof}
Let $(b_y)_{y \in G/U} \in \mathbb{Z}_p[G/U]^G$. Then $(b_{g \cdot y})_{y \in G/U} = g \cdot (b_y)_{y \in G/U} = (b_y)_{y \in G/U}$ for any $g \in G$. According to~\cite[Section~0.9]{dix99} the $p$-adic integers $\mathbb{Z}_p$ can be embedded into the $p$-adic rationals $\mathbb{Q}_p$ which are a topological field. In particular, one can embed $\mathbb{Z}_p[G/U]$ into the $\mathbb{Q}_p$-vector space $\mathbb{Q}_p^{|G:U|}$. This implies that $b_y = b_{g \cdot y}$ for any $y \in G/U$ and $g \in G$, proving the first assertion. For the second assertion, if $f: X \rightarrow Y$ is a map between two finite discrete spaces, then the corresponding induced homomorphism is given by
\[ \mathbb{Z}_p[f]: \mathbb{Z}_p[X] \rightarrow \mathbb{Z}_p[Y], \: (c_x)_{x \in X} \mapsto \Big(\sum_{x \in f^{-1}(y)} c_x \Big)_{y \in Y} \, . \]
Note now that the kernel of the group homomorphism $G/V \rightarrow G/U$ is given by $U/V$.
\end{proof}

This allows us to generalise Proposition~3.9 of~\cite{jo04} from discrete groups to pro-$p$ groups.

\begin{lem}\label{lem:propgroupfinite}
A pro-$p$ group $G$ is finite if and only if
\begin{itemize}
\item $H_{\mathbb{Z}_p}^0(G, -) \ncong \widehat{H}_{\mathbb{Z}_p}^0(G, -)$ and
\item $H_{\mathbb{Z}_p}^n(G, -) \cong \widehat{H}_{\mathbb{Z}_p}^n(G, -)$ for any $n \geq 1$.
\end{itemize}
\end{lem}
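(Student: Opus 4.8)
The plan is to exploit the criterion from Proposition~\ref{prop:mislincohomgroups}, which tells us that $H_{\mathbb{Z}_p}^k(G,-) \cong \widehat{H}_{\mathbb{Z}_p}^k(G,-)$ for all $k \geq n$ precisely when $H_{\mathbb{Z}_p}^k(G,-)$ vanishes on projective objects for all $k \geq n$. Thus the two bulleted conditions translate as follows: the second bullet says that $H_{\mathbb{Z}_p}^k(G,-)$ vanishes on projectives for all $k \geq 1$, while the first bullet says that $H_{\mathbb{Z}_p}^0(G,-)$ does \emph{not} vanish on all projectives (equivalently, $0$ is not already in the range where functor and completion agree). So the combined conditions are equivalent to the single statement: $n=1$ is the smallest integer for which $H_{\mathbb{Z}_p}^k(G,-)$ vanishes on projectives for all $k \geq n$.

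For the forward direction, suppose $G$ is finite. Then $\mathbb{Z}_p$ as a $\mathbb{Z}_p{\llbracket}G{\rrbracket}$-module does \emph{not} have finite projective dimension (a finite group with order divisible by $p$ has infinite cohomological dimension over $\mathbb{Z}_p$, by the standard argument that periodicity of the trivial module would otherwise force vanishing Tate cohomology, contradicting nontriviality for $p$-torsion). For positive degrees, I would argue that $H_{\mathbb{Z}_p}^k(G,P)=0$ for every projective $P$ and every $k \geq 1$: a projective $\mathbb{Z}_p{\llbracket}G{\rrbracket}$-module is a summand of a free module, and higher cohomology of a finite group with coefficients in a (co)induced/free module vanishes by the Eckmann--Shapiro Lemma (Lemma~\ref{lem:shapiro}), which reduces computation to the trivial subgroup where all positive cohomology vanishes. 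For degree zero, I would use Proposition~\ref{prop:zeroethgaloiscohom} together with the explicit computation in Proposition~\ref{prop:specialzeroethcohom}: the transition maps $\mathbb{Z}_p[G/V] \to \mathbb{Z}_p[G/U]$ in the relevant direct system are multiplication by the index $|U:V|$ on the invariants, and since $G$ is a finite $p$-group these indices are powers of $p$ that tend to zero in the $p$-adic topology, yet the functor $H^0$ itself is just $(-)^G$ which is nonzero on the free module $\mathbb{Z}_p[G]$. This exhibits the failure of vanishing on projectives in degree $0$, giving the first bullet.

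For the converse, I would argue the contrapositive: if $G$ is an infinite pro-$p$ group, then the two conditions cannot both hold. The key point is that an infinite pro-$p$ group has cohomological dimension that is either infinite or, if finite, still forces a different pattern; in the finite-cohomological-dimension case Lemma~\ref{lem:prevanishing} gives $\widehat{H}_{\mathbb{Z}_p}^{\bullet}(G,-)=0$ entirely, so $H^0 \cong \widehat H^0 = 0$ would be \emph{compatible} with the first bullet failing, breaking the biconditional. More carefully, I would show that for infinite $G$ the direct system computing $\widehat{H}^0$ stabilises so that $H_{\mathbb{Z}_p}^0(G,-) \cong \widehat{H}_{\mathbb{Z}_p}^0(G,-)$ already in degree zero, again via Proposition~\ref{prop:specialzeroethcohom}: here the indices $|U:V|$ behave differently and the invariants computation shows $H^0$ vanishes on projectives when $G$ is infinite, contradicting the first bullet. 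The main obstacle I anticipate is exactly this degree-zero analysis --- pinning down precisely how the $p$-adic transition maps in the satellite/resolution direct system behave on $(\mathbb{Z}_p[G/U])^G$ and separating the finite from the infinite case. This is where Proposition~\ref{prop:specialzeroethcohom} must be pushed hardest, and I would expect to need the pro-$p$ hypothesis essentially (so that all relevant indices are $p$-powers) rather than merely the profinite one.
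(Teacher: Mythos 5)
Your strategy for the hard (converse) direction is essentially the paper's: translate the two bullets via Proposition~\ref{prop:mislincohomgroups} into ``$H_{\mathbb{Z}_p}^k(G,-)$ vanishes on projectives for $k \geq 1$ but not for $k=0$'', and then kill the infinite case using Proposition~\ref{prop:specialzeroethcohom}, where the transition maps multiply invariants by indices $|U:V|$ that are positive powers of $p$. Your forward direction differs from the paper's, which simply cites that complete cohomology of a finite group is Tate cohomology~\cite{mis94} together with the classical computations in~\cite{ade04}; your hands-on alternative (Shapiro's lemma for coefficients induced from the trivial subgroup in positive degrees, plus the nonzero norm element in $\mathbb{Z}_p[G]^G$ in degree zero) is more self-contained and works, though the ``indices tend to zero'' remark is irrelevant there, and the opening claim about infinite projective dimension of $\mathbb{Z}_p$ is not needed for either bullet.

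Two steps, as literally written, would fail. First, in your finite-cohomological-dimension digression you assert that $H^0 \cong \widehat{H}^0 = 0$ could occur, making the first bullet fail. It cannot: $H_{\mathbb{Z}_p}^0(G,-) = (-)^G$ is never the zero functor (evaluate on the trivial module $\mathbb{Z}_p$), so for an infinite pro-$p$ group of finite cohomological dimension the \emph{first} bullet holds and it is the \emph{second} bullet that fails (any nontrivial pro-$p$ group has $H^1(G,\mathbb{F}_p) \neq 0$, while $\widehat{H}^1 = 0$ by Lemma~\ref{lem:prevanishing}). This digression is redundant anyway, since your main argument should cover all infinite $G$ uniformly. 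Second, your key claim that for infinite $G$ one has $H_{\mathbb{Z}_p}^0(G,-) \cong \widehat{H}_{\mathbb{Z}_p}^0(G,-)$ is false as an unconditional statement: $G = \mathbb{Z}_p$ is a counterexample, since there $\widehat{H}^{\bullet} = 0$ but $H^0 \neq 0$. What is true (and what the paper proves) is only that $H^0$ vanishes on projectives when $G$ is infinite; upgrading this to $H^0 \cong \widehat{H}^0$ needs vanishing on projectives in \emph{all} degrees $k \geq 0$, i.e.\ it needs the second bullet, so the statement must stay inside the proof by contradiction where both bullets are assumed. Finally, the detail you defer is exactly where the paper's effort goes and is genuinely needed: a projective profinite module is a summand of a free module $(\mathbb{Z}_p{\llbracket}G{\rrbracket}){\llbracket}X{\rrbracket}$ on a profinite \emph{space} $X$, and one must pass through the inverse-limit decompositions $(\mathbb{Z}_p{\llbracket}G{\rrbracket}){\llbracket}X{\rrbracket} = \varprojlim_i (\mathbb{Z}_p{\llbracket}G{\rrbracket})[X_i]$ and $\mathbb{Z}_p{\llbracket}G{\rrbracket} = \varprojlim_U \mathbb{Z}_p[G/U]$, using that $\mathrm{Hom}_{\mathbb{Z}_p{\llbracket}G{\rrbracket}}(\mathbb{Z}_p,-)$ preserves inverse limits, to reduce to the situation of Proposition~\ref{prop:specialzeroethcohom}; only then does an infinite strictly decreasing chain of open normal subgroups force a nonzero invariant to be divisible by arbitrarily large powers of $p$, which is impossible in $\mathbb{Z}_p$.
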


\begin{proof}
The forward implication follows from~\cite[p.~78--79]{ade04} because complete cohomology of a finite group $G$ agrees with Tate cohomology by~\cite{mis94}. Hence, assume that $H_{\mathbb{Z}_p}^0(G, -)$ is isomorphic to $\widehat{H}_{\mathbb{Z}_p}^0(G, -)$ in any degree greater, but not equal to $0$. By Proposition~\ref{prop:mislincohomgroups}, there is a projective $\mathbb{Z}_p{\llbracket}G{\rrbracket}$-module $P$ such that $H_{\mathbb{Z}_p}^0(G, P) \neq 0$. According to~\cite[Lemma~5.2.5]{rib10}, $P$ is a quotient of a free profinite module $(\mathbb{Z}_p{\llbracket}G{\rrbracket}){\llbracket}X{\rrbracket}$ on a profinite space $X$ where the quotient homomorphism has a (continuous) section. In particular, $H_{\mathbb{Z}_p}^0(G, (\mathbb{Z}_p{\llbracket}G{\rrbracket}){\llbracket}X{\rrbracket}) \neq 0$. By~\cite[Proposition~5.2.2]{rib10},
\[ (\mathbb{Z}_p{\llbracket}G{\rrbracket}){\llbracket}X{\rrbracket} = \varprojlim_{i \in I} (\mathbb{Z}_p{\llbracket}G{\rrbracket})[X_i] \]
where $X = \varprojlim_{i \in I} X_i$ with every $X_i$ a finite discrete space. Denote the corresponding $\mathbb{Z}_p{\llbracket}G{\rrbracket}$-homomorphisms by 
\[ f_{i, j}: (\mathbb{Z}_p{\llbracket}G{\rrbracket})[X_j] \rightarrow (\mathbb{Z}_p{\llbracket}G{\rrbracket})[X_i] \, . \]
By Proposition~\ref{prop:zeroethgaloiscohom}, $H_{\mathbb{Z}_p}^0(G, -) = \mathrm{Hom}_{\mathbb{Z}_p{\llbracket}G{\rrbracket}}(\mathbb{Z}_p, -)$ where such a Hom-functors preserve limits according to~\cite[p.~116]{mac78}. In particular, $\varprojlim_{i \in I} H_{\mathbb{Z}_p}^0(G, (\mathbb{Z}_p{\llbracket}G{\rrbracket})[X_i]) \neq 0$ as an inverse limit of abelian groups. If we use~\cite[p.~2--3]{rib10} and Proposition~\ref{prop:zeroethgaloiscohom}, we can describe it as the submodule of the product $\prod_{i \in I} H_{\mathbb{Z}_p}^0(G, (\mathbb{Z}_p{\llbracket}G{\rrbracket})[X_i])$ given by
\[ \Big\lbrace (a_i)_{i \in I} \in \prod_{i \in I} H_{\mathbb{Z}_p}^0(G, (\mathbb{Z}_p{\llbracket}G{\rrbracket})[X_i]) \mid \forall i \leq j: f_{i, j}(a_j) = a_i \Big\rbrace \, . \]
Thus, there is $i \in I$ such that $0 \neq a_i \in H_{\mathbb{Z}_p}^0(G, (\mathbb{Z}_p{\llbracket}G{\rrbracket})[X_i])$. As is noted in~\cite[p.~167]{rib10}, $(\mathbb{Z}_p{\llbracket}G{\rrbracket})[X_i] = \prod_{x \in X_i} \mathbb{Z}_p{\llbracket}G{\rrbracket}$ which implies that $H_{\mathbb{Z}_p}^0(G, \mathbb{Z}_p{\llbracket}G{\rrbracket}) \neq 0$. \\

Assume by contradiction that $G$ is infinite. Because the completed group ring can be given as $\mathbb{Z}_p{\llbracket}G{\rrbracket} = \varprojlim_{U \trianglelefteq G \text{ open}} \mathbb{Z}_p[G/U]$ by~\cite[p.~171]{rib10}, we may conclude the existence of a nonzero element $b_{U_0} \in H_{\mathbb{Z}_p}^0(G, \mathbb{Z}_p[G/U_0])$ as before. Moreover, because $G$ is infinite, we can also conclude that there is a countable sequence of open subgroups $U_{n+1} \triangleleftneq U_n \trianglelefteq G$ and of elements $b_{U_n} \in H_{\mathbb{Z}_p}^0(G, \mathbb{Z}_p[G/U_n])$ such that for any $m \leq n \in \mathbb{N}_0$ the projection homomorphism $\mathbb{Z}_p[G/U_n] \rightarrow \mathbb{Z}_p[G/U_m]$ maps $b_{U_n}$ to $b_{U_m}$. It follows from Proposition~\ref{prop:specialzeroethcohom} that there is a $b \in \mathbb{Z}_p$ such that $b_{U_0} = (b)_{x \in G/U_0} \in \prod_{x \in G/U_0} \mathbb{Z}_p$. Since $G$ is assumed to be a pro-$p$ group, it also follows that $b$ is divisible by arbitrarily large powers of $p$. However, this is impossible by~\cite[p.~26--27]{wil98} and thus $G$ is finite.
\end{proof}

Denote now by $\Phi^n: \mathrm{Ext}_{\mathcal{C}}^n(A, -) \rightarrow \widehat{\mathrm{Ext}}_{\mathcal{C}}^n(A, -)$ the terms of the canonical morphism to the Mislin completion as in Definition~\ref{defn:mislincompletion}. In order to prove that the terms $\Phi^n$ fit into a long exact sequence relating three distinct cohomological functors, we generalise a remark from~\cite[p.~210]{kro95} to

\begin{prop}\label{prop:quotinducescan}
For unenriched Ext-functors $\mathrm{Ext}_{\mathcal{C}}^{\bullet}(A, -): \mathcal{C} \rightarrow \mathbf{Ab}$ the quotient map $\mathrm{Hyp}_{\mathcal{C}}(A_{\bullet}, B_{\bullet})_{\bullet} \rightarrow \mathrm{Vog}_{\mathcal{C}}(A_{\bullet}, B_{\bullet})_{\bullet}$ of chain complexes from the hypercohomology construction induces the canonical morphism $\Phi^{\bullet}: \mathrm{Ext}_{\mathcal{C}}^{\bullet}(A, B) \rightarrow \widehat{\mathrm{Ext}}_{\mathcal{C}}^{\bullet}(A, B)$ of cohomological functors from the definition of a Mislin completion.
\end{prop}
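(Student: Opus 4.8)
The plan is to read off the induced morphism from the long exact cohomology sequence attached to the short exact sequence of cochain complexes
\[ 0 \to \mathrm{Bdd}_{\mathcal{C}}(A_{\bullet}, B_{\bullet})_{\bullet} \to \mathrm{Hyp}_{\mathcal{C}}(A_{\bullet}, B_{\bullet})_{\bullet} \xrightarrow{\;q\;} \mathrm{Vog}_{\mathcal{C}}(A_{\bullet}, B_{\bullet})_{\bullet} \to 0 \]
that defines the Vogel complex. By Definition~\ref{defn:vogel} one has $H^n(\mathrm{Vog}_{\mathcal{C}}(A_{\bullet}, B_{\bullet})_{\bullet}) = \widehat{\mathrm{Ext}}_{\mathcal{C}}^n(A, B)$, so the map $q_{\ast}$ induced by the quotient on cohomology already lands in the correct target. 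Granting the identification $H^n(\mathrm{Hyp}_{\mathcal{C}}(A_{\bullet}, B_{\bullet})_{\bullet}) \cong \mathrm{Ext}_{\mathcal{C}}^n(A, B)$ discussed next, the quotient thus yields a morphism $\mathrm{Ext}_{\mathcal{C}}^n(A, B) \cong H^n(\mathrm{Hyp}) \xrightarrow{q_{\ast}} H^n(\mathrm{Vog}) = \widehat{\mathrm{Ext}}_{\mathcal{C}}^n(A, B)$, and the whole content of the proposition is that this morphism coincides with the canonical morphism $\Phi^{\bullet}$ from Definition~\ref{defn:mislincompletion}.

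The identification $H^n(\mathrm{Hyp}) \cong \mathrm{Ext}_{\mathcal{C}}^n(A, B)$ is the first technical step. The augmentation $B_{\bullet} \to B$ induces a cochain map $\mathrm{Hyp}_{\mathcal{C}}(A_{\bullet}, B_{\bullet})_{\bullet} \to \mathrm{Hom}_{\mathcal{C}}(A_{\bullet}, B)$ by sending $(\varphi_{n+k})_{k}$ to its component $\varphi_n \in \mathrm{Hom}_{\mathcal{C}}(A_n, B_0)$ postcomposed with $B_0 \to B$; that this is a cochain map uses only that the augmented resolution is a complex. I would prove it is a quasi-isomorphism by viewing $\mathrm{Hyp}$ as the product total complex of the double complex $\mathrm{Hom}_{\mathcal{C}}(A_p, B_q)$ and exploiting that $A_{\bullet}$ is a complex of projectives concentrated in non-negative degrees: for any acyclic $W_{\bullet}$ the complex with $n$-cochains $\prod_{p} \mathrm{Hom}_{\mathcal{C}}(A_p, W_{p-n})$ is acyclic, proved by lifting a cocycle degree-by-degree upward starting from $A_0$ (the bounded-below hypothesis supplies the base case, and the use of products lets the homotopy have infinite support). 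Applying this to the acyclic complex $\cdots \to B_1 \to B_0 \to B \to 0$ gives the quasi-isomorphism. I would then check this identification is natural in $B$ and compatible with connecting homomorphisms: for a short exact sequence $0 \to B' \to B \to B'' \to 0$ the horseshoe lemma produces a degreewise-split short exact sequence of projective resolutions, hence a short exact sequence of hypercohomology complexes whose connecting maps reproduce the Ext-connecting homomorphisms, and $q$ respects all of this. Consequently $q_{\ast}$ assembles into a morphism of cohomological functors $\mathrm{Ext}_{\mathcal{C}}^{\bullet}(A, -) \to \widehat{\mathrm{Ext}}_{\mathcal{C}}^{\bullet}(A, -)$.

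It remains to identify $q_{\ast}$ with the canonical morphism, and this is where I expect the real work to lie. Since the target vanishes on projectives, the universal property of Definition~\ref{defn:mislincompletion} shows $q_{\ast}$ factors as $\beta \circ \Phi^{\bullet}$ for a unique endomorphism $\beta$ of $\widehat{\mathrm{Ext}}_{\mathcal{C}}^{\bullet}(A, -)$, so everything reduces to proving $\beta = \mathrm{id}$. I would establish this by comparing the hypercohomology model with the naïve direct-limit model $\widehat{\mathrm{Ext}}_{\mathcal{C}}^n(A, B) = \varinjlim_k [\widetilde{A}_{n+k}, \widetilde{B}_k]_{\mathcal{C}}$: under the natural isomorphism between the two models coming from the equivalence of the constructions recalled in Section~\ref{sec:constructions} (established in~\cite{ghe24}), the map $q_{\ast}$ corresponds to the structure map of the colimit out of its initial $k = 0$ stage $[\widetilde{A}_n, B]_{\mathcal{C}}$, precomposed with the natural map $\mathrm{Ext}_{\mathcal{C}}^n(A, B) \to [\widetilde{A}_n, B]_{\mathcal{C}}$ obtained by restricting a cocycle $A_n \to B$ to the syzygy $\widetilde{A}_n$. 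This colimit structure map is exactly how $\Phi^{\bullet}$ is described in the satellite and resolution constructions, which forces $\beta = \mathrm{id}$.

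The main obstacle is therefore the explicit matching in the last paragraph: one must pin down how a Vogel cohomology class is represented by a hypercohomology cocycle, follow it through the quotient $q$ and through the comparison isomorphism to the direct-limit model, and verify that the outcome agrees with the image of the canonical morphism — all while respecting the connecting homomorphisms, so that the identification is one of cohomological functors rather than merely of individual groups. The quasi-isomorphism $H^n(\mathrm{Hyp}) \cong \mathrm{Ext}_{\mathcal{C}}^n(A, B)$ for the product total complex is a secondary subtlety, but one the bounded-below hypothesis on $A_{\bullet}$ resolves cleanly.
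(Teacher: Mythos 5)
Your strategy is sound and, at the level of architecture, coincides with the paper's: both arguments come down to identifying the map induced by the quotient with the canonical morphism by passing through the comparison isomorphisms between the constructions established in~\cite{ghe24}. The waypoints, however, differ. The paper never identifies $H^n(\mathrm{Hyp}_{\mathcal{C}}(A_{\bullet}, B_{\bullet})_{\bullet})$ with $\mathrm{Ext}_{\mathcal{C}}^n(A,B)$ via your augmentation quasi-isomorphism; it works at cocycle level instead: the quotient restricts to a map $\mathrm{Ker}(d^n) \rightarrow \mathrm{Ker}(\overline{d}^n)$, which under the identification of cocycles with chain maps $A[n]_{\bullet} \rightarrow B_{\bullet}$ (Notation~6.1 of~\cite{ghe24}) is, essentially by definition, the map $\vartheta_n^0$ from chain maps modulo homotopy to almost chain maps modulo homotopy. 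The bulk of the paper's proof is then devoted to showing that $\vartheta_{\bullet}^0$ is itself a canonical morphism, by transporting the satellite construction's canonical morphism through the isomorphisms to the resolution construction and onwards to the chain-map descriptions $\mathcal{E}xt_{\mathcal{C}}^{\bullet}$ and $\widehat{\mathcal{E}xt}_{\mathcal{C}}^{\bullet}$. You instead prove the identification $H^n(\mathrm{Hyp}) \cong \mathrm{Ext}^n$ from scratch (a clean argument the paper gets for free from~\cite{ghe24}), insert a harmless universal-property detour ($q_{\ast} = \beta \circ \Phi^{\bullet}$, reduce to $\beta = \mathrm{id}$, which saves no work since proving $\beta = \mathrm{id}$ requires the same matching), and match against the na\"ive Benson--Carlson model $\varinjlim_{k} [\widetilde{A}_{n+k}, \widetilde{B}_k]_{\mathcal{C}}$, in which the canonical morphism is most transparent (your description of the map $\mathrm{Ext}_{\mathcal{C}}^n(A,B) \rightarrow [\widetilde{A}_n, B]_{\mathcal{C}}$ is right, except that a cocycle $A_n \rightarrow B$ \emph{factors through} the syzygy $\widetilde{A}_n$, a quotient of $A_n$, rather than restricting to it). Both routes are viable, and they rely on the same external input: the matching you correctly flag as the ``main obstacle''---chasing a Vogel class through the comparison isomorphisms to a direct-limit model, compatibly with connecting homomorphisms---is exactly the step the paper discharges by citing Diagram~4.5 of Lemma~4.1, Definition~6.9, Lemma~6.12 and Lemma~6.16 of~\cite{ghe24}. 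So your proposal is complete precisely modulo that machinery, which is also where the paper's own proof concentrates all of its effort.
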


\begin{proof}
Denote by $\Phi^{\bullet}: \mathrm{Ext}_{\mathcal{C}}^{\bullet}(A, -) \rightarrow \widehat{\mathrm{Ext}}_{\mathcal{C}}^{\bullet}(A, -)$ the canonical morphism of cohomological functors established through the satellite functor construction in~\cite[Theorem~3.2]{ghe24}. If $\omega_{\bullet}: \widehat{\mathrm{Ext}}_{\mathcal{C}}^{\bullet}(A, -) \rightarrow \mathrm{Ext}_{Res\mathcal{C}}^{\bullet}(A, -)$ is the isomorphism of cohomological functors to the resolution construction from~\cite[Theorem~4.6]{ghe24}, then
\[ \omega_{\bullet} \circ \Phi^{\bullet}: \mathrm{Ext}_{\mathcal{C}}^{\bullet}(A, -) \rightarrow \mathrm{Ext}_{Res, \mathcal{C}}^{\bullet}(A, -) \]
is also a canonical morphism to the Mislin completion. According to Diagram~4.5 of the proof of Lemma~4.1 in~\cite{ghe24}, each term $\omega_n \circ \Phi^n: \mathrm{Ext}_{\mathcal{C}}^n(A, B) \rightarrow \mathrm{Ext}_{Res, \mathcal{C}}^n(A, B)$ is a homomorphism to the direct limit occurring in the resolution construction as in Definition~\ref{defn:resolsinoutline}. If both $\mathcal{E}xt_{\mathcal{C}}^{\bullet}(A, -)$ and $\mathcal{E}xt_{Res, \mathcal{C}}^{\bullet}(A, -)$ are taken as in~\cite[Definition~6.9]{ghe24}, we denote by $\Psi^{\bullet}: \mathcal{E}xt_{\mathcal{C}}^{\bullet}(A, -) \rightarrow \mathcal{E}xt_{Res, \mathcal{C}}^{\bullet}(A, -)$ an analogous morphism to the direct limit. Taking the isomorphisms of cohomological functors $\zeta_{\bullet}$ and $\zeta^{\bullet}$ also from~\cite[Definition~6.9]{ghe24}, we see that the diagram
\begin{center}
\begin{tikzcd}
    \mathrm{Ext}_{\mathcal{C}}^{\bullet}(A{,} \, -) \arrow[r, "\zeta_{\bullet}"] \arrow[d, "\omega_{\bullet} \circ \Phi^{\bullet}"] & \mathcal{E}xt_{\mathcal{C}}^{\bullet}(A{,} \, -) \arrow[d, "\Psi^{\bullet}"] \\
    \mathrm{Ext}_{\mathcal{C}}^{Res, \bullet}(A{,} \, -) \arrow[r, "\zeta^{\bullet}"] & \mathcal{E}xt_{Res, \mathcal{C}}^{\bullet}(A{,} \, -)
\end{tikzcd}
\end{center}
commutes. Since $\mathcal{E}xt_{\mathcal{C}}^{\bullet}(A, -)$ is a different description of the Ext-functors $\mathrm{Ext}_{\mathcal{C}}^{\bullet}(A, -)$ according to~\cite[Notation~6.1]{ghe24}, $\Psi^{\bullet}$ also represents a canonical morphism to the Mislin completion. If $\vartheta_{\bullet}^0$ denote homomorphisms from the proof of~\cite[Lemma~6.12]{ghe24} and $\vartheta^{\bullet}$ the isomorphism of a cohomological functors from~\cite[Lemma~6.16]{ghe24}, then the diagram
\begin{center}
\begin{tikzcd}
    \mathcal{E}xt_{\mathcal{C}}^{\bullet}(A{,} \, -) \arrow[d, "\Psi^{\bullet}"] \arrow[dr, "\vartheta_{\bullet}^0"] & \\
    \mathcal{E}xt_{Res, \mathcal{C}}^{\bullet}(A{,} \, -) \arrow[r, "\vartheta^{\bullet}"] & \widehat{\mathcal{E}xt}_{\mathcal{C}}^{\bullet}(A{,} \, -)
\end{tikzcd}
\end{center}
is commutative. As before, we infer that $\vartheta_{\bullet}^0$ is a canonical morphism to the Mislin completion $\widehat{\mathcal{E}xt}_{\mathcal{C}}^{\bullet}(A, -)$. \\

Note that we can restrict the quotient map of chain complexes
\[ (\mathrm{Hyp}_{\mathcal{C}}(A_{\bullet}, B_{\bullet})_n, d^n)_{n \in \mathbb{Z}} \rightarrow (\mathrm{Vog}_{\mathcal{C}}(A_{\bullet}, B_{\bullet})_n, \overline{d}^n)_{n \in \mathbb{Z}} \]
to a homomorphism $\mathrm{Ker}(d^n) \rightarrow \mathrm{Ker}(\overline{d}^n)$ for any $n \in \mathbb{Z}$. Again by~\cite[Notation~6.1]{ghe24}, the latter is equivalent to $\mathrm{Hom}_{\mathrm{Ch}(\mathcal{C})}(A[n]_{\bullet}, B_{\bullet}) \rightarrow \widehat{\mathrm{Hom}}_{\mathrm{Ch}(\mathcal{C})}(A[n]_{\bullet}, B_{\bullet})$. By definition, this further descends to the homomorphism $\vartheta_n^0: \mathcal{E}xt_{\mathcal{C}}^n(A, B) \rightarrow \widehat{\mathcal{E}xt}_{\mathcal{C}}^n(A, B)$ as desired.
\end{proof}

From this proposition we deduce

\begin{lem}\label{lem:lesandcanmorphism}
The short exact sequence of chain complexes
\[  0 \rightarrow \mathrm{Bdd}_{\mathcal{C}}(A_{\bullet}, B_{\bullet})_{\bullet} \rightarrow \mathrm{Hyp}_{\mathcal{C}}(A_{\bullet}, B_{\bullet})_{\bullet} \rightarrow \mathrm{Vog}_{\mathcal{C}}(A_{\bullet}, B_{\bullet})_{\bullet} \rightarrow 0 \]
from the hypercohomology construction induces the long exact sequence
\[ \dots {} \rightarrow H^n(\mathrm{Bdd}_{\mathcal{C}}(A_{\bullet}, B_{\bullet})_{\bullet}) \rightarrow \mathrm{Ext}_{\mathcal{C}}^n(A, B) \xrightarrow{\Phi^n} \widehat{\mathrm{Ext}}_{\mathcal{C}}^n(A, B) 
\rightarrow H^{n+1}(\mathrm{Bdd}_{\mathcal{C}}(A_{\bullet}, B_{\bullet})_{\bullet}) \rightarrow {} \dots \]
where $\Phi^{\bullet}: \mathrm{Ext}_{\mathcal{C}}^{\bullet}(A, B) \rightarrow \widehat{\mathrm{Ext}}_{\mathcal{C}}^{\bullet}(A, B)$ is the canonical morphism from the definition of a Mislin completion. In particular, the terms $\Phi^n$ fit into a long exact sequence relating three distinct cohomological functors.
\end{lem}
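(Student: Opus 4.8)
The plan is to realise the displayed sequence as the standard long exact cohomology sequence attached to the short exact sequence of cochain complexes appearing in the statement, and then to translate each term and each arrow into the language of (completed) Ext-functors by invoking the identifications already at our disposal. First I would record that the sequence of complexes is genuinely short exact: in each degree $n$ it is the inclusion of the direct sum $\bigoplus_{k} \mathrm{Hom}_{\mathcal{C}}(A_{k+n}, B_k)$ into the product $\prod_{k} \mathrm{Hom}_{\mathcal{C}}(A_{k+n}, B_k)$ followed by the canonical projection onto the quotient, which is exact degreewise, and all maps are chain maps because $\mathrm{Bdd}_{\mathcal{C}}(A_{\bullet}, B_{\bullet})_{\bullet}$ is by construction a subcomplex of $\mathrm{Hyp}_{\mathcal{C}}(A_{\bullet}, B_{\bullet})_{\bullet}$ and $\mathrm{Vog}_{\mathcal{C}}(A_{\bullet}, B_{\bullet})_{\bullet}$ its quotient. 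The zig-zag lemma then furnishes a long exact sequence in cohomology together with its connecting homomorphism $H^n(\mathrm{Vog}) \to H^{n+1}(\mathrm{Bdd})$.

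It then remains to identify the three families of cohomology groups and the two non-connecting arrows. By Definition~\ref{defn:vogel} one has $H^n(\mathrm{Vog}_{\mathcal{C}}(A_{\bullet}, B_{\bullet})_{\bullet}) = \widehat{\mathrm{Ext}}_{\mathcal{C}}^n(A, B)$, while the hypercohomology construction together with the identifications used in the proof of Proposition~\ref{prop:quotinducescan} (via the alternative descriptions of Ext-functors imported from~\cite{ghe24}) gives $H^n(\mathrm{Hyp}_{\mathcal{C}}(A_{\bullet}, B_{\bullet})_{\bullet}) = \mathrm{Ext}_{\mathcal{C}}^n(A, B)$. Crucially, Proposition~\ref{prop:quotinducescan} asserts precisely that the map induced on cohomology by the quotient $\mathrm{Hyp} \to \mathrm{Vog}$ is, under these identifications, the canonical morphism $\Phi^{\bullet}$; hence the arrow $H^n(\mathrm{Hyp}) \to H^n(\mathrm{Vog})$ of the zig-zag sequence becomes $\Phi^n$. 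Substituting these identifications into the long exact sequence yields exactly the displayed one.

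For the final assertion I would verify that $H^{\bullet}(\mathrm{Bdd}_{\mathcal{C}}(A_{\bullet}, B_{\bullet}))$, regarded as a functor of $B$, is itself a cohomological functor, so that all three functors appearing are cohomological and the degreewise sequence is a long exact sequence of cohomological functors. Given a short exact sequence $0 \to B' \to B \to B'' \to 0$ in $\mathcal{C}$, the horseshoe lemma produces a degreewise-split short exact sequence of projective resolutions $0 \to B'_{\bullet} \to B_{\bullet} \to B''_{\bullet} \to 0$; applying the additive functors $\mathrm{Hom}_{\mathcal{C}}(A_{k+n}, -)$, which preserve split exactness, and forming the direct sums defining $\mathrm{Bdd}$ yields a short exact sequence of the corresponding bounded complexes, whence a long exact cohomology sequence equipping $H^{\bullet}(\mathrm{Bdd})$ with connecting homomorphisms satisfying Axioms~\ref{axm:delta} and~\ref{axm:les}. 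Since all the constructions in play are functorial in $B$, the arrows of the degree long exact sequence are natural in $B$ and commute with these connecting homomorphisms, so they constitute morphisms of cohomological functors, with $H^n(\mathrm{Vog}) \to H^{n+1}(\mathrm{Bdd})$ playing the role of the connecting morphism.

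I expect the main obstacle to lie not in the formal manipulation but in this last step: checking that $H^{\bullet}(\mathrm{Bdd})$ is a bona fide cohomological functor and that the degree long exact sequence is natural in $B$ requires care with the choice of compatible projective resolutions (the horseshoe construction is only natural up to chain homotopy) and with the fact that $\mathrm{Hyp}$ involves products rather than direct sums, so that the exactness underlying the identification $H^n(\mathrm{Hyp}) = \mathrm{Ext}^n$ must be inherited cleanly from the cited results rather than re-derived from scratch.
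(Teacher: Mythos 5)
Your proposal is correct and follows essentially the same route as the paper: the paper deduces the lemma from Proposition~\ref{prop:quotinducescan} by applying the standard long exact cohomology sequence (zig-zag lemma) to the degreewise short exact sequence $0 \to \mathrm{Bdd}_{\mathcal{C}}(A_{\bullet}, B_{\bullet})_{\bullet} \to \mathrm{Hyp}_{\mathcal{C}}(A_{\bullet}, B_{\bullet})_{\bullet} \to \mathrm{Vog}_{\mathcal{C}}(A_{\bullet}, B_{\bullet})_{\bullet} \to 0$ and using that proposition to identify the induced map $H^n(\mathrm{Hyp}) \to H^n(\mathrm{Vog})$ with $\Phi^n$ under the identifications $H^n(\mathrm{Hyp}) \cong \mathrm{Ext}_{\mathcal{C}}^n(A,B)$ and $H^n(\mathrm{Vog}) = \widehat{\mathrm{Ext}}_{\mathcal{C}}^n(A,B)$. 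Your extra verification that $H^{\bullet}(\mathrm{Bdd})$ carries a cohomological-functor structure is a reasonable elaboration of the closing claim, which the paper leaves implicit.
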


\begin{rem}
This lemma is similar to Proposition~4.6 in S.\! Guo and L.\! Liang's paper~\cite{guo23}. Both results contain the same long exact sequence where our contribution lies in determining that it involves the terms of the canonical morphism to the Mislin completion.
\end{rem}

\section{An Eckmann--Shapiro Lemma and Dimension shifting}\label{sec:dimshifteckmannshapiro}

Complete cohomology of certain subgroups (objects) relates to the entire group (object) via an Eckmann--Shapiro Lemma, which can be used to establish a partial version of dimension shifting. Thus, let us establish the former, before establishing the latter.

\begin{lem}[Eckmann--Shapiro]\label{lem:eckmannshapiro}
Let $G$ be a group object, $H$ a subgroup object and $R$ a ring object in a category. Denote the abelian category of $R$-module objects with a compatible $G$-action by $Mod_R(G)$. Assume that any $M \in \mathrm{obj}(Mod_R(H))$ can be turned into an object in $Mod_R(G)$ by induction $\mathrm{Ind}_H^G(M)$ and coinduction $\mathrm{Coind}_H^G(M)$ while any $M \in \mathrm{obj}(Mod_R(G))$ can be turned into an object in $Mod_R(H)$ by restriction $\mathrm{Res}_H^G(M)$.
\begin{enumerate}
\item If the adjoint functors $\mathrm{Res}_H^G(-)$ and $\mathrm{Coind}_H^G(-)$ are exact and preserve projective objects, then
\[ \widehat{\mathrm{Ext}}_{R, H}^n(\mathrm{Res}_H^G(A), B) \cong \widehat{\mathrm{Ext}}_{R, G}^n(A, \mathrm{Coind}_H^G(B)) \]
as (unenriched) completed Ext-functors for every $n \in \mathbb{Z}$, $A \in \mathrm{obj}(Mod_R(G))$ and $B \in \mathrm{obj}(Mod_R(H))$. If $A = R$, one has
\[ \widehat{H}_R^n(H, B) \cong \widehat{H}_R^n(G, \mathrm{Coind}_H^G(B)) \, . \]
\item If the adjoint functors $\mathrm{Ind}_H^G(-)$ and $\mathrm{Res}_H^G(-)$ are exact and preserve projectives, then
\[ \widehat{\mathrm{Ext}}_{R, G}^n(\mathrm{Ind}_H^G(A), B) \cong \widehat{\mathrm{Ext}}_{R, H}^n(A, \mathrm{Res}_H^G(B)) \]
as (unenriched) completed Ext-functors for every $n \in \mathbb{Z}$, $A \in \mathrm{obj}(Mod_R(H))$ and $B \in \mathrm{obj}(Mod_R(G))$.
\end{enumerate}
\end{lem}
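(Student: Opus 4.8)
The plan is to reduce the completed isomorphisms to the classical Eckmann--Shapiro isomorphism for \emph{ordinary} Ext-functors, and then to transport that isomorphism across the Mislin completion using two formal properties: functoriality of the completion and its compatibility with precomposition by an exact projective-preserving functor. I spell out the first assertion; the second is proved by the same argument with the adjunction $\mathrm{Ind}_H^G \dashv \mathrm{Res}_H^G$ in place of $\mathrm{Res}_H^G \dashv \mathrm{Coind}_H^G$, resolving $\mathrm{Ind}_H^G(A)$ and precomposing with $\mathrm{Res}_H^G$.

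First I would establish the ordinary isomorphism $\mathrm{Ext}_{R,H}^m(\mathrm{Res}_H^G(A), B) \cong \mathrm{Ext}_{R,G}^m(A, \mathrm{Coind}_H^G(B))$ for $m \geq 0$ as an isomorphism of cohomological functors in the variable $B$. Since $\mathrm{Res}_H^G$ is exact and preserves projectives, it carries a projective resolution $P_\bullet \to A$ in $Mod_R(G)$ to a projective resolution $\mathrm{Res}_H^G(P_\bullet) \to \mathrm{Res}_H^G(A)$ in $Mod_R(H)$. The adjunction $\mathrm{Res}_H^G \dashv \mathrm{Coind}_H^G$ supplies a natural isomorphism of cochain complexes $\mathrm{Hom}_{R,H}(\mathrm{Res}_H^G(P_\bullet), B) \cong \mathrm{Hom}_{R,G}(P_\bullet, \mathrm{Coind}_H^G(B))$; passing to cohomology yields the stated isomorphism. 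Because the adjunction is natural in $B$ and each $P_k$ is projective, a short exact sequence in $B$ induces short exact sequences of these Hom-complexes that correspond under the isomorphism, so the resulting long exact sequences agree and the isomorphism respects connecting homomorphisms. Hence it is an isomorphism of cohomological functors.

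Next I would record two formal facts. Functoriality of the completion is immediate from the universal property in Definition~\ref{defn:mislincompletion}: an isomorphism $\alpha\colon T^\bullet \to W^\bullet$ of cohomological functors induces, through the canonical morphisms, mutually inverse comparison morphisms between $\widehat{T}^\bullet$ and $\widehat{W}^\bullet$, so $\widehat{T}^\bullet \cong \widehat{W}^\bullet$. The second fact is a precomposition lemma: if $F$ is exact and preserves projectives and $S^\bullet$ is a cohomological functor, then $\widehat{(S^\bullet \circ F)} \cong \widehat{S}^\bullet \circ F$. I would prove this from the resolution construction of Definition~\ref{defn:resolsinoutline}. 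As $F$ is exact and preserves projectives, it sends a projective resolution of $B$ to one of $F(B)$, so the syzygies satisfy $\widetilde{F(B)}_k \cong F(\widetilde{B}_k)$, and it sends each defining sequence $0 \to \widetilde{B}_{k+1} \to P_k \to \widetilde{B}_k \to 0$ to the corresponding sequence for $F(B)$. By naturality of the connecting homomorphisms of $S^\bullet$, the transition maps of the two direct systems are thereby identified, and taking the limit gives
\[ \widehat{(S^\bullet \circ F)}^n(B) = \varinjlim_{k} S^{n+k}\big(F(\widetilde{B}_k)\big) \cong \varinjlim_{k} S^{n+k}\big(\widetilde{F(B)}_k\big) = \widehat{S}^n(F(B)). \]

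Assembling these, set $W^\bullet := \mathrm{Ext}_{R,G}^\bullet(A, \mathrm{Coind}_H^G(-))$, which is a cohomological functor in $B \in \mathrm{obj}(Mod_R(H))$ because $\mathrm{Coind}_H^G$ is exact. The ordinary Eckmann--Shapiro isomorphism identifies $\mathrm{Ext}_{R,H}^\bullet(\mathrm{Res}_H^G(A), -)$ with $W^\bullet$, so functoriality gives $\widehat{\mathrm{Ext}}_{R,H}^\bullet(\mathrm{Res}_H^G(A), -) \cong \widehat{W}^\bullet$; the precomposition lemma with $F = \mathrm{Coind}_H^G$ and $S^\bullet = \mathrm{Ext}_{R,G}^\bullet(A, -)$ then identifies $\widehat{W}^\bullet$ with $\widehat{\mathrm{Ext}}_{R,G}^\bullet(A, \mathrm{Coind}_H^G(-))$, which is the first assertion. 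The case $A = R$ follows since $\mathrm{Res}_H^G(R) = R$ and group cohomology is $\mathrm{Ext}$ from the trivial module, whereby both sides become complete cohomology groups. I expect the main obstacle to be the precomposition lemma, specifically verifying that $F$ commutes with the formation of syzygies and, more delicately, that the transition maps of the two direct systems (built from the connecting homomorphisms of the syzygy sequences) are genuinely matched; this is exactly the point at which exactness of $F$ is indispensable. One could alternatively route this identification through Proposition~\ref{prop:detamislincompletion}, but the resolution construction makes the comparison most transparent.
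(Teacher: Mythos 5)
Your proposal is correct. The paper's own proof is a single direct-system argument carried out inside the resolution construction: it fixes a projective resolution $(A_l, a_l)$ of $A$, realises the Ext-groups on both sides as kernels and cokernels of the Hom-complexes $\mathrm{Hom}_{R,H}(\mathrm{Res}_H^G(A_\bullet), -)$ and $\mathrm{Hom}_{R,G}(A_\bullet, \mathrm{Coind}_H^G(-))$, and verifies with one large commuting diagram that the adjunction isomorphisms
\[ \mathrm{Ext}_{R,H}^m(\mathrm{Res}_H^G(A), \widetilde{B}_k) \cong \mathrm{Ext}_{R,G}^m(A, \mathrm{Coind}_H^G(\widetilde{B}_k)) \]
commute with the connecting homomorphisms of the syzygy sequences $0 \rightarrow \widetilde{B}_{k+1} \rightarrow B_k \rightarrow \widetilde{B}_k \rightarrow 0$; the desired isomorphism is then the direct limit of these commuting squares, where recognising the right-hand limit as $\widehat{\mathrm{Ext}}_{R,G}^n(A, \mathrm{Coind}_H^G(B))$ uses precisely your observation that an exact, projective-preserving $\mathrm{Coind}_H^G$ carries syzygies of $B$ to syzygies of $\mathrm{Coind}_H^G(B)$. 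You factor this one diagram into three modular steps: the ordinary Eckmann--Shapiro isomorphism as an isomorphism of cohomological functors (checked against all short exact sequences in $B$, not only the syzygy sequences the paper needs), functoriality of the Mislin completion via the universal property of Definition~\ref{defn:mislincompletion} (which the paper's proof never invokes), and the precomposition lemma $\widehat{S^{\bullet} \circ F} \cong \widehat{S}^{\bullet} \circ F$ for exact projective-preserving $F$, which you prove by the same resolution-construction comparison the paper performs implicitly. The mathematical content is therefore the same, but the architectures differ in what they buy: your route isolates the precomposition lemma as a reusable statement of independent interest and keeps all diagram-checking at the level of ordinary Ext-functors, whereas the paper's route stays entirely within one direct-limit computation and never needs to form the completion of the composite functor $\mathrm{Ext}_{R,G}^{\bullet}(A, \mathrm{Coind}_H^G(-))$ as an intermediate object, nor to appeal to the universal property at all.
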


\begin{proof}
We only prove the first assertion as the second one is analogous. For this we use the resolution construction. If $f: A \rightarrow C$ is a morphism in $Mod_R(H)$, then it follows from the naturality of the adjunction and the Five Lemma~\cite[Proposition~I.1.1]{car56} that
\begin{align*}
\mathrm{Ker}\big(\mathrm{Hom}_{R, H}(\mathrm{Res}_H^G(f), B)\big) &\cong \mathrm{Ker}\big(\mathrm{Hom}_{R, G}(f, \mathrm{Coind}_H^G(B)\big) \text{ and} \\
\mathrm{Coker}\big(\mathrm{Hom}_{R, H}(\mathrm{Res}_H^G(f), B)\big) &\cong \mathrm{Coker}\big(\mathrm{Hom}_{R, G}(f, \mathrm{Coind}_H^G(B)\big) \, .
\end{align*}
To ease notation, we write $\mathrm{Ker}(\mathrm{Res}(f), B)$ for $\mathrm{Ker}\big(\mathrm{Hom}_{R, H}(\mathrm{Res}_H^G(f), B)\big)$, $\mathrm{Ker}(f, \mathrm{Cnd}(B))$ for $\mathrm{Ker}\big(\mathrm{Hom}_{R, H}(f, \mathrm{Coind}_H^G(B)\big)$ and analogously $\mathrm{Coker}(\mathrm{Res}(f), B)$, $\mathrm{Coker}(f, \mathrm{Cnd}(B))$. If $(A_l, a_l)_{l \in \mathbb{N}_0}$ is a projective resolution of $A$, then let $m, k \in \mathbb{N}_0$ with $m = n+k$ and consider the short exact sequence $0 \rightarrow \widetilde{B}_{k+1} \rightarrow B_k \rightarrow \widetilde{B}_k \rightarrow 0$. This gives rise to the diagram on the next page. The homomorphisms from the front to the back are isomorphisms arising from the above adjunction. By naturality of this adjunction and the universal property of kernels and cokernels, all squares of the diagram commute. Since we assume that restriction and coinduction are exact and preserve projectives, all rows are exact. Thus, the front and back side give rise to the connecting homomorphisms of the respective Ext-functors.\pagebreak

\begin{center}
\rotatebox{90}{%
\begin{tikzcd}[ampersand replacement = \&, column sep = tiny]
    \& \& \& \& \& \& {\scriptstyle \mathrm{Ext}_{R, G}^m(A{,} \, \mathrm{Cnd}_H^G(\widetilde{B}_k))} \arrow[dd] \& \\    
    \& \& \& \& \& {\scriptstyle \mathrm{Ext}_{R, H}^m(\mathrm{Res}_H^G(A){,} \, \widetilde{B}_k)} \arrow[ur] \arrow[dd] \& \\
    \& \& {\scriptstyle \mathrm{Coker}(a_m{,} \, \mathrm{Cnd}(\widetilde{B}_{k+1}))} \arrow[rr] \arrow[dd] \& \& {\scriptstyle \mathrm{Coker}(a_m{,} \, \mathrm{Cnd}(B_k))} \arrow[rr] \arrow[dd] \& \& {\scriptstyle \mathrm{Coker}(a_m{,} \, \mathrm{Cnd}(\widetilde{B}_k))} \arrow[r] \arrow[dd] \& {\scriptstyle 0} \\    
    \& {\scriptstyle \mathrm{Coker}(\mathrm{Res}(a_m){,} \, \widetilde{B}_{k+1})} \arrow[rr, crossing over] \arrow[ur] \arrow[dd] \& \& {\scriptstyle \mathrm{Coker}(\mathrm{Res}(a_m){,} \, B_k)} \arrow[rr, crossing over] \arrow[ur] \& \& {\scriptstyle \mathrm{Coker}(\mathrm{Res}(a_m){,} \, \widetilde{B}_k)} \arrow[r] \arrow[ur] \arrow[from = uu, crossing over] \& {\scriptstyle 0 \qquad \quad {}} \& \\
    \& {\scriptstyle {} \qquad \quad 0} \arrow[r] \& {\scriptstyle \mathrm{Ker}(a_{m+1}{,} \, \mathrm{Cnd}(\widetilde{B}_{k+1}))} \arrow[rr] \arrow[dd] \& \& {\scriptstyle \mathrm{Ker}(a_{m+1}{,} \, \mathrm{Cnd}(B_k))} \arrow[rr] \& \& {\scriptstyle \mathrm{Ker}(a_{m+1}{,} \, \mathrm{Cnd}(\widetilde{B}_k))} \& \\    
    {\scriptstyle 0} \arrow[r] \& {\scriptstyle \mathrm{Ker}(\mathrm{Res}(a_{m+1}){,} \, \widetilde{B}_{k+1})} \arrow[rr, crossing over] \arrow[ur] \arrow[dd] \& \& {\scriptstyle \mathrm{Ker}(\mathrm{Res}(a_{m+1}){,} \, B_k)} \arrow[rr] \arrow[ur] \arrow[from = uu, crossing over] \& \& {\scriptstyle \mathrm{Ker}(\mathrm{Res}(a_{m+1}){,} \, \widetilde{B}_k)} \arrow[ur] \arrow[from = uu, crossing over] \&\& \\
    \& \& {\scriptstyle \mathrm{Ext}_{R, G}^{m+1}(A{,} \,  \mathrm{Cnd}_H^G(\widetilde{B}_{k+1}))} \& \& \& \& \& \\
    \& {\scriptstyle \mathrm{Ext}_{R, H}^{m+1}(\mathrm{Res}_H^G(A){,} \, \widetilde{B}_{k+1})} \arrow[ur] \& \& \& \& \& \&
\end{tikzcd}
}
\end{center}\pagebreak

In particular, this diagram gives rise to the commuting square
\begin{center}
\begin{tikzcd}
    \mathrm{Ext}_{Mod_R(H)}^m(\mathrm{Res}_H^G(A){,} \, \widetilde{B}_k) \arrow[r, "\cong"] \arrow[d, "\delta^m"] & \mathrm{Ext}_{Mod_R(G)}^m(A{,} \, \mathrm{Coind}_H^G(\widetilde{B}_k)) \arrow[d, "\delta^m"] \\
    \mathrm{Ext}_{Mod_R(H)}^{m+1}(\mathrm{Res}_H^G(A){,} \, \widetilde{B}_{k+1}) \arrow[r, "\cong"] & \mathrm{Ext}_{Mod_R(H)}^{m+1}(A{,} \,  \mathrm{Coind}_H^G(\widetilde{B}_{k+1}))
\end{tikzcd}
\end{center}
These squares form a direct system in whose direct limit we obtain the desired isomorphism.
\end{proof}

To present examples where the above Eckmann--Shapiro Lemma applies, we explain based on~\cite[p.~62--63 and p.~67]{bro82} how induction, coinduction and restriction are usually defined for modules. Assume that there is a group ring (object) $R[G]$ such that the category of $R[G]$-module objects is equivalent to the category $Mod_R(G)$. Let
\[ - \otimes_{R[G]} -: Mod_R(G) \times Mod_R(G) \rightarrow Mod_R(G) \]
be a tensor product that is a right-adjoint to an internal Hom-functor
\[ \underline{\mathrm{Hom}}_{R, G}(-, -): Mod_R(G)^{\mathrm{op}} \times Mod_R(G) \rightarrow Mod_R(G) \, . \]
If $R[G]$ is an $(R[G], R[H])$-bimodule object, then we define restriction and coinduction as
\begin{align*}
\mathrm{Res}_H^G(-) := - \otimes_{R[G]} R[G]: \; \; &Mod_R(G) \rightarrow Mod_R(H) \text{ and} \\
\mathrm{Coind}_H^G(-) := \underline{\mathrm{Hom}}_{R, H}(R[G], -): \; \; &Mod_R(H) \rightarrow Mod_R(G) \, .
\end{align*}
If $R[G]$ is additionally an $(R[H], R[G])$-bimodule, then we define induction and restriction as
\begin{align*}
\mathrm{Ind}_H^G(-) := - \otimes_{R[H]} R[G]: \; \; &Mod_R(H) \rightarrow Mod_R(G) \text{ and} \\
\mathrm{Res}_H^G(-) := \underline{\mathrm{Hom}}_{R, G}(R[G], -): \; \; &Mod_R(G) \rightarrow Mod_R(H)
\end{align*}

\begin{exl}\label{exl:eckmannshapiro}
The conditions of Lemma~\ref{lem:eckmannshapiro} are satisfied in the following two instances.
\begin{enumerate}
\item $G$ is a discrete group, $H$ a finite index subgroup and $R$ a discrete commutative ring. Modules are taken over the respective group rings.
\item $G$ is a profinite group, $H$ an open subgroup and $R$ a profinite commutative ring. Profinite modules are taken over the respective completed group rings.
\end{enumerate}
\end{exl}

\begin{proof}
Let us first clarify some points regarding the profinite setting. Since the subgroup $H$ is open in $G$, it is of finite index~\cite[Lemma~0.3.1]{wil98}. Although there is no internal Hom-functor for profinite modules in general, coinduction and restriction can be nevertheless defined because $H$ is open subgroup of the profinite group $G$. Namely, for any $R{\llbracket}H{\rrbracket}$-module $M$ and any $R{\llbracket}G{\rrbracket}$-module $N$, endowing $\mathrm{Coind}_H^G(M) = \mathrm{Hom}_{R{\llbracket}H{\rrbracket}}(R{\llbracket}G{\rrbracket}, M)$ and $\mathrm{Res}_H^G(N) = \mathrm{Hom}_{R{\llbracket}G{\rrbracket}}(R{\llbracket}G{\rrbracket}, N)$ with the compact-open topology turns them into profinite modules~\cite[p.~369--371]{sym00}. As there is a tensor product for profinite modules~\cite[p.~177/191]{rib10}, induction and restriction can be defined in this case. By~\cite[Lemma~7.8]{bog16} restriction is left adjoint to coinduction and induction left adjoint to restriction in the profinite case. \\

As both the discrete and profinite case can be treated analogously from this point, we write $R[G]$ for either the discrete or competed group ring of $G$ over $R$ and denote by $H$ the finite index (open) subgroup. It follows from the description $\mathrm{Res}_H^G(-) = \mathrm{Hom}_{R[G]}(R[G], -)$ that restriction is exact. By~\cite[Proposition~I.3.1]{bro82}, \cite[Proposition~5.4.2]{rib10} and~\cite[Proposition~5.7.1]{rib10} the $R[H]$-module $R[G]$ is projective. We thus infer by~\cite[Lemma~2.2.3]{wei94} that coinduction is exact and by~\cite[Proposition~5.5.3]{rib10} and~\cite[p.~68]{wei94} that induction is exact. According to the proof of~\cite[Corollary~7.9]{bog16}, induction preserves projectives. Because the (open) subgroup $H$ is of finite index in $G$, $\mathrm{Ind}_H^G(M) \cong \mathrm{Coind}_H^G(M)$ for every $R[H]$-module $M$ by~\cite[Proposition~III.5.9]{bro82} and~\cite[p.~371]{sym00}. Due to this isomorphism and~\cite[Lemma~2.2.3]{wei94}, coinduction preserves projectives.
\end{proof}

\begin{thm}[Dimension shifting]\label{thm:dimshifting}
Let $T^{\bullet}: \mathcal{C} \rightarrow \mathcal{D}$ be a cohomological functor where $\mathcal{C}$ has enough projectives and in $\mathcal{D}$ all countable direct limits exist and are exact.
\begin{itemize}
\item For every $M \in \mathrm{obj}(\mathcal{C})$ there is $M^{\ast} \in \mathrm{obj}(\mathcal{C})$ such that $\widehat{T}^{n+1}(M^{\ast}) \cong \widehat{T}^n(M)$ for every $n \in \mathbb{Z}$.
\item If there is a monomorphism $f: M \rightarrow N$ in $\mathcal{C}$ with $\widehat{T}^k(N) = 0$ for every $k \in \mathbb{Z}$, then $\widehat{T}^{n-1}(\mathrm{Coker}(f)) \cong \widehat{T}^n(M)$.
\item Assume that $G$ is a group object, that $R$ is a ring object and that an Eckmann--Shapiro Lemma such as Lemma~\ref{lem:shapiro} holds. Then there is a monomorphism as in the previous assertion for $\widehat{T}^{\bullet} = \widehat{H}_R^{\bullet}(G, -)$ if there exists a subgroup object $H$ of finite cohomological dimension over $R$ such that $\mathrm{Res}_H^G$ is a faithful functor.
\end{itemize}
\end{thm}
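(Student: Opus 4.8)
The plan is to establish the three assertions in order, obtaining the first two directly from the long exact sequence of the cohomological functor $\widehat{T}^{\bullet}$ together with its vanishing on projectives, and then combining the Eckmann--Shapiro Lemma with Lemma~\ref{lem:prevanishing} to produce the monomorphism demanded in the third.

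For the first assertion, since $\mathcal{C}$ has enough projectives I would choose a short exact sequence $0 \rightarrow K \rightarrow P \rightarrow M \rightarrow 0$ with $P$ projective and set $M^{\ast} := K$. Feeding this sequence into Axiom~\ref{axm:les} for $\widehat{T}^{\bullet}$ yields
\[ \widehat{T}^n(P) \rightarrow \widehat{T}^n(M) \xrightarrow{\widehat{\delta}^n} \widehat{T}^{n+1}(K) \rightarrow \widehat{T}^{n+1}(P), \]
and because $\widehat{T}^{\bullet}$ vanishes on the projective $P$ both outer terms are zero, so $\widehat{\delta}^n$ is the desired isomorphism $\widehat{T}^n(M) \cong \widehat{T}^{n+1}(M^{\ast})$. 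For the second assertion I would run the same argument on the short exact sequence $0 \rightarrow M \xrightarrow{f} N \rightarrow \mathrm{Coker}(f) \rightarrow 0$, whose long exact sequence reads
\[ \widehat{T}^{n-1}(N) \rightarrow \widehat{T}^{n-1}(\mathrm{Coker}(f)) \xrightarrow{\widehat{\delta}^{n-1}} \widehat{T}^n(M) \rightarrow \widehat{T}^n(N); \]
the hypothesis $\widehat{T}^k(N) = 0$ forces the two outer terms to vanish, whence $\widehat{\delta}^{n-1}$ is an isomorphism $\widehat{T}^{n-1}(\mathrm{Coker}(f)) \cong \widehat{T}^n(M)$.

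For the third assertion the idea is to take $N := \mathrm{Coind}_H^G(\mathrm{Res}_H^G(M))$ and to use the unit $\eta_M: M \rightarrow \mathrm{Coind}_H^G(\mathrm{Res}_H^G(M))$ of the adjunction $\mathrm{Res}_H^G \dashv \mathrm{Coind}_H^G$ as the map $f$. On the one hand, the Eckmann--Shapiro isomorphism of Lemma~\ref{lem:eckmannshapiro}, applied with $B = \mathrm{Res}_H^G(M)$, gives $\widehat{H}_R^k(G, N) \cong \widehat{H}_R^k(H, \mathrm{Res}_H^G(M))$ for every $k \in \mathbb{Z}$; since $H$ has finite cohomological dimension over $R$, Lemma~\ref{lem:prevanishing} makes the right-hand side vanish, so that $\widehat{H}_R^k(G, N) = 0$ for all $k$, exactly as needed for the second assertion to apply. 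On the other hand, I would invoke the standard categorical fact that the left adjoint of an adjunction is faithful precisely when every component of its unit is a monomorphism; as $\mathrm{Res}_H^G$ is assumed faithful, this forces $\eta_M$ to be monic, and applying the second assertion to $f = \eta_M$ then yields dimension shifting for $\widehat{H}_R^{\bullet}(G, -)$.

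The main obstacle is the verification that $\eta_M$ is a monomorphism, i.e.\! the characterisation of faithfulness of a left adjoint through its unit. Unwinding the adjunction bijection $\mathrm{Hom}(\mathrm{Res}_H^G X, \mathrm{Res}_H^G Y) \cong \mathrm{Hom}(X, \mathrm{Coind}_H^G \mathrm{Res}_H^G Y)$ and using naturality of $\eta$ shows that $\mathrm{Res}_H^G(g)$ corresponds to $\eta_Y \circ g$, so injectivity of $\mathrm{Res}_H^G$ on hom-sets is equivalent to each $\eta_Y$ being left-cancellable; I would spell this out to conclude that $\eta_M$ is monic. I would also note that the exactness and projectivity-preservation hypotheses needed for Lemma~\ref{lem:eckmannshapiro} are subsumed in the assumption that an Eckmann--Shapiro Lemma holds, so the isomorphism $\widehat{H}_R^k(G, \mathrm{Coind}_H^G(B)) \cong \widehat{H}_R^k(H, B)$ is genuinely available for $B = \mathrm{Res}_H^G(M)$.
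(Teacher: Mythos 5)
Your proposal is correct and follows essentially the same route as the paper's proof: syzygy plus the long exact sequence and vanishing on projectives for the first assertion, the analogous argument for the second, and for the third the unit of the adjunction $\mathrm{Res}_H^G \dashv \mathrm{Coind}_H^G$ (monic by faithfulness of the left adjoint, which the paper cites from Mac~Lane), combined with Eckmann--Shapiro and Lemma~\ref{lem:prevanishing} to verify the hypotheses of the second assertion. The only difference is that you spell out the details (the choice $N = \mathrm{Coind}_H^G(\mathrm{Res}_H^G(M))$ and the unit--faithfulness characterisation) that the paper leaves implicit or cites.
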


\begin{proof}
For the first assertion consider the short exact sequence $0 \rightarrow \widetilde{M}_1 \rightarrow M_0 \rightarrow M \rightarrow 0$ and set $M^{\ast} := \widetilde{M}_1$. Then this assertion follows from Axiom~\ref{axm:les} and Definition~\ref{defn:mislincompletion}. The second assertion is deduced analogously. Regarding the third assertion, if we consider adjoint functors $L: \mathcal{C} \rightarrow \mathcal{D}$, $R: \mathcal{D} \rightarrow \mathcal{C}$, then one can deduce from~\cite[Theorem~IV.3.1]{mac78} that the left adjoint $L$ is faithful if and only if the unit over every $D \in \mathrm{obj}(\mathcal{D})$ is a monomorphism $D \rightarrow RL(D)$. Thus, the third assertion follows from the second one and Lemma~\ref{lem:prevanishing}.
\end{proof}

We use Example~\ref{exl:eckmannshapiro} to deduce the following result.

\begin{exl}\label{exl:dimshifting}
The conditions of the third assertion in Theorem~\ref{thm:dimshifting} are satisfied in the following two instances.
\begin{enumerate}
\item $G$ is a discrete group, $H$ a finite-index subgroup of finite cohomological dimension and $R$ a discrete commutative ring. Modules are taken to be discrete over the group ring $R[G]$.
\item $G$ is a profinite group, $H$ an open subgroup of finite cohomological dimension and $R$ a profinite commutative ring. Modules are taken to be profinite over the completed group ring $R{\llbracket}G{\rrbracket}$.
\end{enumerate}
\end{exl}

\section{External products of unenriched Ext-functors}\label{sec:ordinarycohomprods}

To construct external products and thus cup products of completed Ext-functors, we provide a category-theory-flavoured outline of these products for Ext-functors in this section. As we have not found a treatment of external products in categories of module objects in full generality, we generalise hereby K.\! S.\! Brown's account of cup products in group cohomology found in~\cite[Chapter~V]{bro82}. \\

The starting point of external products and thus of cup product are tensor products. Tensor products are taken to be bi-additive associate functors. Given our focus on group cohomology, we only consider tensor products $\otimes_R$ in categories of module objects $Mod_R$ over a ring object $R$ or in categories $Mod_R(G)$ of $R$-module objects with a compatible action of a group object $G$. In either case, it is assumed that this category has enough projectives and that the tensor product $P \otimes_R Q$ is projective whenever $P$, $Q$ are projective. Further, it is assumed that there are natural isomorphisms $M \otimes_R R \cong M \cong R \otimes_R M$.

\begin{exl}[Tensor products]\label{exl:tensorprods}
A tensor product $\otimes_R$ satisfies the above properties in the following instances
\begin{itemize}
\item $R$ is a commutative ring, $G$ is a group and modules are taken to be discrete.
\item $R$ is a commutative profinite ring, $G$ is a profinite group and modules are taken to be profinite.
\end{itemize}
\end{exl}

\begin{proof}
The case of rings and modules is classical. In the profinite case, the completed tensor product $\widehat{\otimes}_R$ is constructed in $Mod_R$ in~\cite[Section~5.5]{rib10} and in $Mod_R(G)$ in~\cite[Section~5.8]{rib10}. Bi-additivity and the isomorphisms $M \widehat{\otimes}_R R \cong M \cong R \widehat{\otimes}_R M$ are subject to~\cite[Proposition~5.5.3]{rib10}. Concerning associativity, $\widehat{\otimes}_R$ satisfies according to~\cite[p.~177]{rib10} the universal property that every continuous bilinear map $A \times B \rightarrow T$ factors uniquely through a continuous homomorphism $A \widehat{\otimes}_R B \rightarrow T$. Because $(A \widehat{\otimes}_R B) \widehat{\otimes}_R C$ and $A \widehat{\otimes}_R (B \widehat{\otimes}_R C)$ satisfy an analogous universal property for triples and continuous multilinear maps, they are isomorphic and $\widehat{\otimes}_R$ is thus associative. Lastly, any projective profinite module is a retract of a free profinite module. It follows from Exercise~5.5.5 and the proof of Proposition~5.8.3 in~\cite{rib10} that the tensor product of two projective profinite modules is again projective.
\end{proof}

\begin{notn}
For the reminder of the paper we write
\begin{itemize}
\item $Mod$ for $Mod_R$ or $Mod_{R}(G)$,
\item $Hom$ for $Hom_R$ or $Hom_{R, G}$ and
\item $\mathrm{Ext}^n(-, -)$ for $\mathrm{Ext}_R^n(-, -)$ or $\mathrm{Ext}_{R, G}^n(-, -)$.
\end{itemize}
\end{notn}

We extend $\otimes_R$ to a tensor product of projective resolutions. If $A$, $C$ are objects in $Mod$ and $(A_n, a_n)_{n \in \mathbb{N}_0}$, $(C_n,c _n)_{n \in \mathbb{N}_0}$ are corresponding projective resolutions, then we define the tensor product $A_{\bullet} \otimes_R C_{\bullet}$ of $A_{\bullet}$ and $C_{\bullet}$ as the following chain complex. For $B_1, \dots, B_n$ objects in $Mod$ there is a coproduct $\bigoplus_{i = 1}^n B_i$ whose canonical monomorphisms we denote by $i_{n, B_k}: B_k \rightarrow \bigoplus_{i = 1}^n B_i$ and canonical epimorphisms by $p_{n, B_k}: \bigoplus_{i = 1}^n B_i \rightarrow B_k$; see~\cite[p.~250--251]{mac95} for more detail. According to~\cite[p.~163]{mac95} we form for $n \in \mathbb{N}_0$ the $n$-chains as
\[ (A_{\bullet} \otimes_R C_{\bullet})_n := \bigoplus_{i = 0}^n A_i \otimes_R C_{n-i} \]
and the corresponding boundary map as
\begin{align}
D_{n+1} := &\sum_{k = 0}^{n+1}\big(i_{n+1, A_{k-1} \otimes_R C_{n+1-k}} \circ (a_k \otimes_R \mathrm{id}_{C_{n+1-k}}) \circ p_{n+2, A_k \otimes_R C_{n+1-k}} \nonumber  \\
&+ (-1)^k i_{n+1, A_k \otimes_R C_{n-k}} \circ (\mathrm{id}_{A_k} \otimes_R c_{n+1-k}) \circ p_{n+2, A_k \otimes_R C_{n+1-k}}\big): \label{eq:tensordifferential} \\
&(A_{\bullet} \otimes_R C_{\bullet})_{n+1} \rightarrow (A_{\bullet} \otimes_R C_{\bullet})_n \nonumber
\end{align}
where it is understood that $a_0 = c_0 = 0$. All terms of $A_{\bullet} \otimes_R C_{\bullet}$ are projective because we have assume that the tensor product of two projective modules is again projective. If $A_{\bullet}'$ and $C_{\bullet}'$ are different projective resolutions, then $A_{\bullet} \otimes_R C_{\bullet}$ is chain homotopic to $A_{\bullet}' \otimes_R C_{\bullet}'$~\cite[p.~164]{mac95}. Write $a: A_0 \rightarrow A$, $c: C_0 \rightarrow C$ for the augmentation maps. As can been inferred from~\cite[p.~164/229]{mac95}, one needs to impose conditions on $A_{\bullet} \otimes_R C_{\bullet}$ to ensure that it is an acyclic complex and thus a projective resolution of $A \otimes_R C$ with augmentation map $a \otimes_R c: A_0 \otimes_R C_0 \rightarrow A \otimes_R C$. \\

External products arise from defining tensor products of chain maps. For this let $(B_n, b_n)_{\mathbb{N}_0}$, $(E_n, e_n)_{n \in \mathbb{N}_0}$ be chain complexes that vanish in negative degrees. As at the end of Section~\ref{sec:constructions} when discussing the hypercohomology construction, we extend the projective resolutions $A_{\bullet}$, $C_{\bullet}$ to chain complexes indexed over $\mathbb{Z}$ by setting them to zero in negative degrees. For $m, n \in \mathbb{N}_0$ let $u_{\bullet} \in \mathrm{Hyp}_R(A_{\bullet}, B_{\bullet})_m$ and $v_{\bullet} \in \mathrm{Hyp}_R(C_{\bullet}, E_{\bullet})_n$ be cochains in the respective hypercohomology complex. Similar to the start of Subsection~6.1 in~\cite{ghe24}, we consider them as componentwise morphisms of the form $\lbrace u_{m+k}: A_{m+k} \rightarrow B_k \rbrace_{k \in \mathbb{Z}}$ and $\lbrace v_{n+l}: C_{n+l} \rightarrow E_l \rbrace_{l \in \mathbb{Z}}$ which do not need to be chain maps. According to~\cite[p.~10]{bro82}, defining
\begin{align}
(u_{\bullet} \otimes_R v_{\bullet})_k := \sum_{l = 0}^k (-1)^{(m+l)n} &i_{k, B_l \otimes_R E_{k-l}} \circ (u_{m+l} \otimes_R v_{n+k-l}) \circ p_{m+n+k, A_{m+l} \otimes_R C_{n+k-l}}: \nonumber \\
&(A_{\bullet} \otimes_R C_{\bullet})_{m+n+k} \rightarrow (B_{\bullet} \otimes_R E_{\bullet})_k \, . \label{eq:chaintensorprod}
\end{align}
for any $k \in \mathbb{N}_0$ yields a cochain $u_{\bullet} \otimes_R v_{\bullet}$ in $\mathrm{Hyp}_R(A_{\bullet} \otimes_R C_{\bullet}, B_{\bullet} \otimes_R E_{\bullet})_{m+n}$. Since $\otimes_R$ is bi-additive, this yields a homomorphism
\[ \mathrm{Hyp}_R(A_{\bullet}, B_{\bullet})_m \otimes \mathrm{Hyp}_R(C_{\bullet}, E_{\bullet})_n  \rightarrow \mathrm{Hyp}_R(A_{\bullet} \otimes_R C_{\bullet}, B_{\bullet} \otimes_R E_{\bullet})_{m+n}, \, u_{\bullet} \otimes v_{\bullet} \mapsto u_{\bullet} \otimes_R v_{\bullet} \]
where $\otimes$ denotes the tensor product of abelian groups. If we take the differential
\[ d^{m+n}: \mathrm{Hyp}_R(A_{\bullet} \otimes_R C_{\bullet}, B_{\bullet} \otimes_R E_{\bullet})_{m+n} \rightarrow \mathrm{Hyp}_R(A_{\bullet} \otimes_R C_{\bullet}, B_{\bullet} \otimes_R E_{\bullet})_{m+n+1} \]
as in Equation~\ref{eq:hypercohombd}, then it is also noted in~\cite[p.~10]{bro82} that
\begin{equation}\label{eq:wellbehaveddiffs}
d^{m+n}(u_{\bullet} \otimes_R v_{\bullet}) = (d^m u_{\bullet}) \otimes_R v_{\bullet} + (-1)^m u_{\bullet} \otimes_R (d^n v_{\bullet}) \, .
\end{equation}
Let $B, E \in \mathrm{obj}(\mathcal{C})$. Similar to the start of Subsection~6.1 in~\cite{ghe24}, we may consider the chain complex $i(B)_{\bullet}$ whose only nonzero terms is $B$ in degree $0$ and similarly $i(E)_{\bullet}$. If we define unenriched Ext-functors as derived functors of unenriched Hom-functors, then $\mathrm{Ext}^n(A, B) = H^n(\mathrm{Hyp}_R(A_{\bullet}, i(B)_{\bullet}))$ by definition of the corresponding differentials. We assume now explicitly that $A_{\bullet} \otimes_R C_{\bullet}$ is a projective resolution of $A \otimes_R C$. By this and Equation~\ref{eq:wellbehaveddiffs}, the tensor product of cochains as in Equation~\ref{eq:chaintensorprod} descends to a well defined homomorphism
\begin{align}
\vee: &\mathrm{Ext}^m(A, B) \otimes \mathrm{Ext}^n(C, E) \rightarrow \mathrm{Ext}^{m+n}(A \otimes_R C, B \otimes_R E) \nonumber \\
&\big(u + \mathrm{Im}(\mathrm{Hom}(a_m, B))\big) \otimes \big(v + \mathrm{Im}(\mathrm{Hom}(c_n, E))\big) \label{eq:ordinaryextprod} \\
&{} \qquad \mapsto (-1)^{mn} (u \otimes_R v) \circ p_{m+n, A_m \otimes_R C_n} + \mathrm{Im}(\mathrm{Hom}(D_{m+n}, B \otimes_R E)) \nonumber
\end{align}
that is termed an external product~\cite[p.~109--110]{bro82}. \\

We present some properties of external products of unenriched Ext-functors that we shall generalise to completed unenriched Ext-functors in Section~\ref{sec:cohomprodpropties}. To demonstrate that external products are natural as is mentioned in~\cite[p.~110]{bro82}, let $r: X \rightarrow A$, $s: Y \rightarrow C$, $f: B \rightarrow M$, $g: E \rightarrow N$ be morphisms in $Mod$. If we take corresponding projective resolutions, assume that $X_{\bullet} \otimes_R Y_{\bullet}$ is a projective resolution of $X \otimes_R Y$ and consider lifts to chain maps $r_{\bullet}: X_{\bullet} \rightarrow A_{\bullet}$, $s_{\bullet}: Y_{\bullet} \rightarrow C_{\bullet}$ as in the Comparison Theorem~\cite[Theorem~2.2.6]{wei94}. Due to Equation~\ref{eq:wellbehaveddiffs}, the chain map $r_{\bullet} \otimes_R s_{\bullet}: X_{\bullet} \otimes_R Y_{\bullet} \rightarrow A_{\bullet} \otimes_R C_{\bullet}$ only depends on $r$ and $s$ up to chain homotopy. Then the square
\begin{equation}\label{diag:extprodnat}
\begin{tikzcd}
    \mathrm{Ext}^m(A{,} \, B) \otimes \mathrm{Ext}^n(C{,} \, E) \arrow[r, "\vee"] \arrow[d, "\mathrm{Ext}^m(r{,} \, f) \otimes \mathrm{Ext}^n(s{,} \, g)"] & \mathrm{Ext}^{m+n}(A \otimes_R C{,} \, B \otimes_R E) \arrow[d, "\mathrm{Ext}^{m+n}(r \otimes_R s{,} \, f \otimes_R g)"] \\
    \mathrm{Ext}^m(X{,} \, M) \otimes \mathrm{Ext}^n(Y{,} \, N) \arrow[r, "\vee"] & \mathrm{Ext}^{m+n}(X \otimes_R Y{,} \, M \otimes_R N)
\end{tikzcd}
\end{equation}
commutes and external products are indeed natural. \\

As is proved in~\cite[p.~110--111]{bro82}, external products of Ext-functors respect connecting homomorphisms, which is important in the construction of external products of completed Ext-functors. More specifically, if $0 \rightarrow B'' \rightarrow B' \rightarrow B \rightarrow 0$ is a short exact sequence and $E$ is such that $0 \rightarrow B'' \otimes_R E \rightarrow B' \otimes_R E \rightarrow B \otimes_R E \rightarrow 0$ remains a short exact sequence, then the diagram
\begin{equation}\label{diag:extandconnfirst}
\begin{tikzcd}
    \mathrm{Ext}^m(A{,} \, B) \otimes \mathrm{Ext}^n(C{,} \, E) \arrow[r, "\vee"] \arrow[d, "\delta^m \otimes \mathrm{id}"] & \mathrm{Ext}^{m+n}(A \otimes_R C{,} \, B \otimes_R E) \arrow[d, "\delta^{m+n}"] \\
    \mathrm{Ext}^{m+1}(A{,} \, B'') \otimes \mathrm{Ext}^n(C{,} \, E) \arrow[r, "\vee"] & \mathrm{Ext}^{m+n+1}(A \otimes_R C{,} \, B'' \otimes_R E)
\end{tikzcd}
\end{equation}
commutes. On the other hand, if $0 \rightarrow E'' \rightarrow E' \rightarrow E \rightarrow 0$ is a short exact sequence and $B$ is such that $0 \rightarrow B \otimes_R E'' \rightarrow B \otimes_R E' \rightarrow B \otimes_R E \rightarrow 0$ remains a short exact sequence, then the diagram
\begin{equation}\label{diag:extandconnsecond}
\begin{tikzcd}
    \mathrm{Ext}^m(A{,} \, B) \otimes \mathrm{Ext}^n(C{,} \, E) \arrow[r, "\vee"] \arrow[d, "\mathrm{id} \otimes \delta^n"] & \mathrm{Ext}^{m+n}(A \otimes_R C{,} \, B \otimes_R E) \arrow[d, "(-1)^m \delta^{m+n}"] \\
    \mathrm{Ext}^m(A{,} \, B) \otimes \mathrm{Ext}^{n+1}(C{,} \, E'') \arrow[r, "\vee"] & \mathrm{Ext}^{m+n+1}(A \otimes_R C{,} \, B \otimes_R E'')
\end{tikzcd}
\end{equation}
commutes. \\

To conclude that external products are associative as mentioned in~\cite[p.~111]{bro82}, recall that the direct product $X \oplus Y$ is defined as a split extension $X \rightleftarrows X \oplus Y \rightleftarrows Y$. If we tensor with $Z$, we conclude that $\otimes_R$ distributes over finite products in the sense that there are isomorphisms
\begin{align*}
&(X \oplus Y) \otimes_R Z \rightarrow (X \otimes_R Z) \oplus (Y \otimes_R Z) \text{ and} \\
&X \otimes_R (Y \oplus Z) \rightarrow (X \otimes_R Y) \oplus (X \otimes_R Z) \, .
\end{align*}
Because $\otimes_R$ is associative by assumption, $A_{\bullet} \otimes_R C_{\bullet} \otimes_R F_{\bullet}$ is a well defined chain complex of projectives. If the latter is also a projective resolution of $A \otimes_R C \otimes_R F$, it follows from Equation~\ref{eq:ordinaryextprod} that external products are associative, meaning that
\begin{equation}\label{eq:extprodassoc}
\forall x \in \mathrm{Ext}_R^m(A{,} \, B), y \in \mathrm{Ext}^n(C{,} \, E), z \in \mathrm{Ext}^o(F{,} \, G): \: x \vee (y \vee z) = (x \vee y) \vee z \, .
\end{equation}
If the tensor product $\otimes_R$ is symmetric, then the external product $\vee$ is graded commutative which is demonstrated in~\cite[p.~111--112]{bro82}. More specifically, let $\mathrm{swap}: X \otimes_R Y \rightarrow Y \otimes_R X$ denote a bi-natural isomorphism. If $X_{\bullet}$, $Y_{\bullet}$ are projective resolutions of $X$ and $Y$ such that $X_{\bullet} \otimes_R Y_{\bullet}$ is a projective resolution of $X \otimes_R Y$, we define a chain isomorphism $\mathrm{swap}_{\bullet}: X_{\bullet} \otimes_R Y_{\bullet} \rightarrow Y_{\bullet} \otimes_R X_{\bullet}$ by
\begin{align}
\mathrm{swap}_k := \sum_{l = 0}^k (-1)^{l(k-l)} i_{k+1, Y_{k-l} \otimes_R X_l} \circ \mathrm{swap} &\circ p_{k+1, X_l \otimes_R Y_{k-l}}: \label{eq:swapchaintensorprod} \\
&(X_{\bullet} \otimes_R Y_{\bullet})_k \rightarrow (Y_{\bullet} \otimes_R X_{\bullet})_k \nonumber
\end{align}
for any $k \in \mathbb{Z}$. Then the diagram
\begin{equation}\label{diag:extprodcommity}
\begin{tikzcd}
    \mathrm{Hyp}_R(A_{\bullet}{,} \, B_{\bullet})_m \otimes \mathrm{Hyp}_R(C_{\bullet}{,} \, E_{\bullet})_n \arrow[r] \arrow[d, "(-1)^{mn}\mathrm{swap}"] & \mathrm{Hyp}_R(A_{\bullet} \otimes_R C_{\bullet}{,} \, B_{\bullet} \otimes_R E_{\bullet})_{m+n} \arrow[d, "\mathrm{Hyp}_R(\mathrm{swap}_{\bullet}{,} \, \mathrm{swap}_{\bullet})_{m+n}"] \\
    \mathrm{Hyp}_R(C_{\bullet}{,} \, E_{\bullet})_n \otimes \mathrm{Hyp}_R(A_{\bullet}{,} \, B_{\bullet})_m \arrow[r] & \mathrm{Hyp}_R(C_{\bullet} \otimes_R A_{\bullet}{,} \, E_{\bullet} \otimes_R B_{\bullet})_{m+n}
\end{tikzcd}
\end{equation}
commutes where the horizontal homomorphisms are taken as in Equation~\ref{eq:chaintensorprod}. Therefore, external products are graded commutative, meaning that
\begin{equation}\label{eq:extprodgradedcomm}
\forall x \in \mathrm{Ext}^m(A, B), y \in \mathrm{Ext}^n(C, E):
x \vee y = (-1)^{mn} y \vee x \, .
\end{equation}

In the case of group cohomology, external products give rise to cup products. For this we assume that the restriction functor $Mod_R(G) \rightarrow Mod_R$ forgetting the $G$-action on $R$-module objects preserves projectives and that $R$ as an object in $Mod_R$ is projective. It follows from Lemma~9.8.2 and Lemma~10.4.4 in~\cite{wil98} that for any two projective resolutions $R_{\bullet}$, $R_{\bullet}'$ of $R$ also $R_{\bullet} \otimes_R R_{\bullet}'$ is a projective resolution of $R$. Consequently, the external product from Equation~\ref{eq:ordinaryextprod} becomes the cup product
\begin{equation}\label{eq:ordinarycupprod}
\smile: H_R^m(G, B) \otimes H_R^n(G, E) \rightarrow H_R^{m+n}(G, B \otimes_R E) \, .
\end{equation}
All above mentioned properties of external products pass on to cup products, meaning that cup products are natural, associative, respect connecting homomorphisms and are graded commutative. In addition, there is a multiplicative unit for cup products which is shown in~\cite[p.~111]{bro82}. More specifically, if $1 \in H_R^0(G, R)$ denotes the element arising from the augmentation map $\varepsilon: R_0 \rightarrow R$, then
\[ \forall x \in H_R^n(G, M): x \smile 1 = x = 1 \smile x \, . \]
By construction, the cup product is bi-additive. Hence, if we define cup products with at least one element of negative degree to vanish, then the cup product turns $\bigoplus_{n \in \mathbb{Z}} H_R^n(G, R)$ into graded ring with identity $1$.

\section{Existence of cohomology products}\label{sec:cohomprodsexists}

In this section we construct external products and cup products of completed unenriched Ext-functors under specific conditions and provide examples. Moreover, we construct Yoneda products in full generality. \\

For clarity, we introduce the following notation. Reserving the letter `$D$' for boundary maps, we denote the variables of external products by $A$, $B$, $C$ and $E$. We write $G$ for a group object and $R$ for a ring object. As we have already used the letter `$I$' for index sets, we denote the variables of Yoneda products by $F$, $H$ and $J$ in order to distinguish them from the variables of external products. As most constructions of these cohomology products involve taking direct limits, we require the following result that we have not found in the literature in this manner.

\begin{prop}\label{prop:dirlimcommswithtensor}
Let $(M_i, m_i)_{i \in \mathbb{N}}$, $(N_i, n_i)_{i \in \mathbb{N}}$ be direct systems of abelian groups and denote by $\otimes$ the tensor product of abelian groups. Then
\[ \varinjlim_{i \in \mathbb{N}} (M_i \otimes N_i, m_i \otimes n_i) \cong (\varinjlim_{j \in \mathbb{N}} M_j) \otimes (\varinjlim_{k \in \mathbb{N}} N_k) \, . \]
\end{prop}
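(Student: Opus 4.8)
The plan is to establish the isomorphism $\varinjlim_{i \in \mathbb{N}} (M_i \otimes N_i) \cong (\varinjlim_j M_j) \otimes (\varinjlim_k N_k)$ by exhibiting mutually inverse homomorphisms, exploiting the fact that the tensor product $- \otimes -$ of abelian groups is a left adjoint (to $\mathrm{Hom}$) and thus preserves colimits in each variable separately. The key structural observation is that a direct limit over a \emph{single} directed index set $\mathbb{N}$ of the diagonal system $(M_i \otimes N_i)$ can be compared to the double direct limit $\varinjlim_{(j,k)} (M_j \otimes N_k)$ taken over $\mathbb{N} \times \mathbb{N}$: since $\mathbb{N}$ is directed, the diagonal $\{(i,i) : i \in \mathbb{N}\}$ is cofinal in $\mathbb{N} \times \mathbb{N}$, so these two colimits agree canonically.

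First I would record that for each $i$ the structure maps $m_i \colon M_i \to M_{i+1}$ and $n_i \colon N_i \to N_{i+1}$ induce $m_i \otimes n_i \colon M_i \otimes N_i \to M_{i+1} \otimes N_{i+1}$, so that $(M_i \otimes N_i, m_i \otimes n_i)_{i \in \mathbb{N}}$ is genuinely a direct system. Next I would invoke the adjunction: because tensoring with a fixed abelian group is a left adjoint, it commutes with direct limits, giving for each fixed $k$ a natural isomorphism $\varinjlim_j (M_j \otimes N_k) \cong (\varinjlim_j M_j) \otimes N_k$, and symmetrically in the other variable. Taking the direct limit over $k$ and using that direct limits commute with one another yields
\[
\varinjlim_{k} \varinjlim_{j} (M_j \otimes N_k) \cong \Big(\varinjlim_j M_j\Big) \otimes \Big(\varinjlim_k N_k\Big).
\]
The remaining task is to identify the iterated limit on the left, which ranges over $\mathbb{N} \times \mathbb{N}$, with the diagonal limit $\varinjlim_i (M_i \otimes N_i)$ appearing in the statement.

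The main obstacle is precisely this cofinality comparison. I would argue that the diagonal functor $\mathbb{N} \to \mathbb{N} \times \mathbb{N}$, $i \mapsto (i,i)$, is cofinal: given any $(j,k) \in \mathbb{N} \times \mathbb{N}$, choosing $i = \max(j,k)$ yields $(j,k) \le (i,i)$, and the comma categories are directed because $\mathbb{N}$ is. By the standard result that restriction along a cofinal functor preserves colimits, the diagonal colimit $\varinjlim_i (M_i \otimes N_i)$ agrees with the colimit over all of $\mathbb{N} \times \mathbb{N}$ of $(j,k) \mapsto M_j \otimes N_k$, which is the iterated limit computed above. Concatenating these identifications gives the desired isomorphism; one should finally check, by tracing an elementary tensor $x_i \otimes y_i$ through the construction, that the resulting isomorphism is the natural one sending the class of $x_i \otimes y_i$ to $(x_i$-class$) \otimes (y_i$-class$)$, so that it is compatible with the maps into and out of both sides. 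Since all the arrows here are the canonical structure maps, this last verification is routine and can be left to the reader or dispatched with a one-line remark.
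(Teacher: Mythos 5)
Your proof is correct, but it takes a genuinely different route from the paper. The paper argues entirely by hand with universal properties: it forms the canonical bilinear maps $q_i \colon M_i \times N_i \to M_i \otimes N_i$, passes to the direct limit of these (using that direct limits of sets commute with finite products to identify the source with $(\varinjlim_j M_j) \times (\varinjlim_k N_k)$), and then shows that every bilinear map out of this product factors uniquely through the resulting map $q$, i.e.\ it verifies directly that $\varinjlim_i (M_i \otimes N_i)$ satisfies the universal property of the tensor product of the two limits. You instead assemble three standard categorical facts: the tensor product is a left adjoint in each variable and hence preserves colimits, iterated colimits over $\mathbb{N} \times \mathbb{N}$ commute (Fubini), and the diagonal $\mathbb{N} \to \mathbb{N} \times \mathbb{N}$ is cofinal, so the diagonal system $(M_i \otimes N_i, m_i \otimes n_i)$ computes the colimit over the full product poset. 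Each of your steps is sound --- in particular your cofinality check (nonemptiness via $i = \max(j,k)$ plus directedness of the comma posets) is exactly what is needed. Your argument is shorter, more conceptual, and generalises verbatim to arbitrary directed index sets and to any cocomplete monoidal closed setting; the paper's argument is more elementary and self-contained, and it produces the isomorphism in an explicit element-level form, which matches how the proposition is then used later in the paper (elements of completed Ext-groups are traced through the direct limits of the resolution construction when defining external and Yoneda products).
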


\begin{proof}
We demonstrate that $\varinjlim_{i \in \mathbb{N}} (M_i \otimes N_i, m_i \otimes n_i)$ satisfies the universal property of the tensor product $(\varinjlim_{j \in \mathbb{N}} M_j) \otimes (\varinjlim_{k \in \mathbb{N}} N_k)$. More specifically, the tensor product $A \otimes B$ has the universal property that every bilinear map $A \times B \rightarrow T$ factors uniquely through a homomorphism $A \otimes B \rightarrow T$. Denote by $M_i \times N_i$ the cartesian product and observe that the squares
\begin{equation}\label{diag:cartprodandtensor}
\begin{tikzcd}
    M_i \times N_i \arrow[r, "q_i"] \arrow[d, "m_i \times n_i"] & M_i \otimes N_i \arrow[d, "m_i \otimes n_i"] \\
    M_{i+1} \times N_{i+1} \arrow[r, "q_{i+1}"] & M_{i+1} \otimes N_{i+1}
\end{tikzcd}
\end{equation}
commute. Although $M_i \times N_i$ and $M_{i+1} \times N_{i+1}$ are abelian groups and $m_i \otimes n_i$ a homomorphism, we regard the former as sets and the latter as a function. In particular, Diagram~\ref{diag:cartprodandtensor} gives rise to a direct system of functions in whose direct limit we obtain
\begin{equation}\label{eq:cartprodandtensor}
q := \varinjlim_{i \in \mathbb{N}} q_i: \varinjlim_{\mathbf{Set}, i \in \mathbb{N}} (M_i \times N_i, m_i \times n_i) \rightarrow \varinjlim_{\mathbf{Set}, i \in \mathbb{N}} (M_i \otimes N_i, m_i \otimes n_i)
\end{equation}
where the latter is taken to be a direct limit of sets. By~\cite[\href{https://stacks.math.columbia.edu/tag/002W}{Tag 002W}]{stacks-project}, the former direct limit can be given by
\begin{equation}\label{eq:swapdirlimitwithprod}
\varinjlim_{\mathbf{Set}, i \in \mathbb{N}} (M_i \times N_i, m_i \times n_i) \cong (\varinjlim_{\mathbf{Set}, j \in \mathbb{N}} M_j) \times (\varinjlim_{\mathbf{Set}, k \in \mathbb{N}} N_k) \, .
\end{equation}
In particular, if $m_{i, \infty}: M_i \rightarrow (\varinjlim_{\mathbf{Set}, j \in \mathbb{N}} M_j)$ and $n_{i, \infty}: N_i \rightarrow (\varinjlim_{\mathbf{Set}, k \in \mathbb{N}} N_k)$ denote functions to the respective direct limit, then the function
\begin{equation}\label{eq:maptotensordirlimit}
m_{i, \infty} \times n_{i, \infty}: M_i \times N_i \rightarrow (\varinjlim_{\mathbf{Set}, j \in \mathbb{N}} M_j) \times (\varinjlim_{\mathbf{Set}, k \in \mathbb{N}} N_k)
\end{equation}
represents the function to the direct limit $\varinjlim_{\mathbf{Set}, i \in \mathbb{N}} (M_i \times N_i, m_i \times n_i)$. According to~\cite[\href{https://stacks.math.columbia.edu/tag/04AX}{Tag 04AX}]{stacks-project}, if $\sqcup$ denotes the disjoint union of sets, then the latter direct limit in Equation~\ref{eq:cartprodandtensor} can be constructed as
\begin{equation}\label{eq:dirlimitofsets}
\varinjlim_{\mathbf{Set}, i \in \mathbb{N}} (M_i \otimes N_i, m_i \otimes n_i) = \bigsqcup_{n \in \mathbb{N}} M_n \otimes N_n / \sim
\end{equation}
where $x_i \in M_i \otimes N_i \sim  y_j \in M_j \otimes N_j$ if there is $i, j \leq k$ such that $x_i$ and $y_j$ are mapped to the same element in $M_k \otimes N_k$. It follows from the proof of~\cite[Proposition~1.2.1]{rib10} and~\cite[\href{https://stacks.math.columbia.edu/tag/09WR}{Tag 09WR}]{stacks-project} that it can be endowed with the structure of an abelian group such that it forms the colimit $\varinjlim_{i \in \mathbb{N}} (M_i \otimes N_i, m_i \otimes n_i)$ of abelian groups and not just of sets. Same holds true for $\varinjlim_{i \in \mathbb{N}} M_i$ and $\varinjlim_{i \in \mathbb{N}} N_i$ from Equation~\ref{eq:swapdirlimitwithprod}. Hence, the function $q$ from Equation~\ref{eq:cartprodandtensor} can be written as
\[ q: (\varinjlim_{j \in \mathbb{N}} M_j) \times (\varinjlim_{k \in \mathbb{N}} N_k) \rightarrow \varinjlim_{i \in \mathbb{N}} (M_i \otimes N_i, m_i \otimes n_i) \, . \]
To prove that $q$ satisfies the universal property of the tensor product, consider a bilinear function $a: (\varinjlim_{j \in \mathbb{N}} M_j) \times (\varinjlim_{k \in \mathbb{N}} N_k) \rightarrow A$ for an abelian group $A$. Composing with the homomorphisms $m_{i, \infty}$ and $n_{i, \infty}$ occurring in Equation~\ref{eq:maptotensordirlimit} yields a bilinear function
\[ M_i \times N_i \xrightarrow{m_{i, \infty} \times n_{i, \infty}} (\varinjlim_{j \in \mathbb{N}} M_j) \times (\varinjlim_{k \in \mathbb{N}} N_k) \xrightarrow{a} A \, . \]
By the universal property of the tensor product, there exists a unique homomorphism $b_i: M_i \otimes N_i \rightarrow A$ such that the square
\begin{equation}\label{diag:factorbytensorindirsystem}
\begin{tikzcd}
    M_i \times N_i \arrow[r, "q_i"] \arrow[d, "m_{i, \infty} \times n_{i, \infty}"] & M_i \otimes N_i \arrow[d, "b_i"] \\
    (\varinjlim_{j \in \mathbb{N}} M_j) \times (\varinjlim_{k \in \mathbb{N}} N_k) \arrow[r, "a"] & A
\end{tikzcd}
\end{equation}
commutes. By this and Diagram~\ref{diag:cartprodandtensor} we infer that the triangle
\begin{equation}\label{diag:getuniquetensormap}
\begin{tikzcd}
    M_i \otimes N_i \arrow[dr, bend left = 10, "b_i"] \arrow[d, "m_i \otimes n_i"] & \\
    M_{i+1} \otimes N_{i+1} \arrow[r, "b_{i+1}" near start] & A
\end{tikzcd}
\end{equation}
is commutative whence there is a homomorphism
\[ b := \varinjlim_{i \in \mathbb{N}} b_i: \varinjlim_{i \in \mathbb{N}} (M_i \otimes N_i, m_i \otimes n_i) \rightarrow A \]
By Diagram~\ref{diag:cartprodandtensor}, \ref{diag:factorbytensorindirsystem} and~\ref{diag:getuniquetensormap} we obtain the factorisation
\[ a: \varinjlim_{\mathbf{Set}, i \in \mathbb{N}} (M_i \times N_i, m_i \times n_i) \xrightarrow{q} \varinjlim_{\mathbf{Set}, i \in \mathbb{N}} (M_i \otimes N_i, m_i \otimes n_i) \xrightarrow{b} A \, . \]
Because $b$ is unique in this factorisation due to Equation~\ref{eq:dirlimitofsets} and the uniqueness of the homomorphisms $b_i$ from Diagram~\ref{diag:factorbytensorindirsystem} and~\ref{diag:getuniquetensormap}, $\varinjlim_{i \in \mathbb{N}} (M_i \otimes N_i, m_i \otimes n_i)$ satisfies the universal property of the tensor product $(\varinjlim_{j \in \mathbb{N}} M_j) \otimes (\varinjlim_{k \in \mathbb{N}} N_k)$.
\end{proof}

\begin{thm}\label{thm:extandcupprod}
Let $A_{\bullet}$, $C_{\bullet}$ be projective resolutions of objects $A$, $C$ in $Mod$ such that  $A_{\bullet} \otimes_R C_{\bullet}$ is a projective resolution of $A \otimes_R C$. Let $B_{\bullet}$, $E_{\bullet}$ be projective resolutions of $B$, $E$ such that for their syzygies $\widetilde{F}_k \in \lbrace \widetilde{B}_k$, $\widetilde{E}_k \rbrace$ the functors
\[ - \otimes_R \widetilde{F}_k, \widetilde{F}_k \otimes_R -: Mod \rightarrow Mod \]
are exact and preserve projectives. Assume that the restriction functor $Mod_R(G) \rightarrow Mod_R$ forgetting the $G$-action on $R$-module objects preserves projectives and that $R$ as an object in $Mod_R$ is projective.
\begin{enumerate}
    \item Then for every $m, n \in \mathbb{Z}$ external products
    \[ \vee: \widehat{\mathrm{Ext}}^m(A, B) \otimes \widehat{\mathrm{Ext}}^n(C, E) \rightarrow \widehat{\mathrm{Ext}}^{m+n}(A  \otimes_R C, B \otimes_R E) \]
    can be defined for completed unenriched Ext-functors equivalently through the resolution construction and the hypercohomoology construction.
    \item These external products descend to cup products
    \[ \smile: \widehat{H}_R^m(G, M) \otimes \widehat{H}_R^n(G, N) \rightarrow \widehat{H}_R^{m+n}(G, M \otimes_R N) \]
    of completed unenriched group cohomology as the tensor product of any two projective resolutions of $R$ remains a projective resolution of $R$
\end{enumerate}
\end{thm}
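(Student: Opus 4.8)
The plan is to construct the external product through the resolution construction, where the ordinary external products of Section~\ref{sec:ordinarycohomprods} are available termwise, and then to transport it to the hypercohomology construction via the comparison isomorphisms of~\cite{ghe24}. Throughout, write $\widetilde{B}_k$, $\widetilde{E}_k$ for the syzygies of the chosen projective resolutions $B_{\bullet}$, $E_{\bullet}$, so that by the resolution construction (Definition~\ref{defn:resolsinoutline}) one has $\widehat{\mathrm{Ext}}^m(A, B) = \varinjlim_k \mathrm{Ext}^{m+k}(A, \widetilde{B}_k)$ and $\widehat{\mathrm{Ext}}^n(C, E) = \varinjlim_k \mathrm{Ext}^{n+k}(C, \widetilde{E}_k)$, the transition maps being the connecting homomorphisms of the short exact sequences $0 \to \widetilde{B}_{k+1} \to B_k \to \widetilde{B}_k \to 0$ and their analogues for $E$.

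First I would produce a projective resolution of $B \otimes_R E$ by a staircase argument. Tensoring $0 \to \widetilde{B}_{j+1} \to B_j \to \widetilde{B}_j \to 0$ on the right with $\widetilde{E}_j$ and $0 \to \widetilde{E}_{j+1} \to E_j \to \widetilde{E}_j \to 0$ on the left with $\widetilde{B}_{j+1}$, the hypotheses that $- \otimes_R \widetilde{E}_j$ and $\widetilde{B}_{j+1} \otimes_R -$ are exact and preserve projectives guarantee that both stay short exact with projective middle term ($B_j \otimes_R \widetilde{E}_j$ and $\widetilde{B}_{j+1} \otimes_R E_j$ respectively). Splicing these sequences over all $j$ yields a projective resolution of $B \otimes_R E$ whose $2j$-th syzygy is exactly $\widetilde{B}_j \otimes_R \widetilde{E}_j$ and whose $(2j+1)$-st syzygy is $\widetilde{B}_{j+1} \otimes_R \widetilde{E}_j$. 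As completed Ext-functors are independent of the chosen resolution, I may compute the target along this staircase, so that the even-indexed terms $\mathrm{Ext}^{m+n+2j}(A \otimes_R C, \widetilde{B}_j \otimes_R \widetilde{E}_j)$ form a cofinal subsystem of the direct system defining $\widehat{\mathrm{Ext}}^{m+n}(A \otimes_R C, B \otimes_R E)$.

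Next I would assemble the product. By Proposition~\ref{prop:dirlimcommswithtensor} the tensor product of abelian groups commutes with these direct limits, so that $\widehat{\mathrm{Ext}}^m(A, B) \otimes \widehat{\mathrm{Ext}}^n(C, E) \cong \varinjlim_j \big( \mathrm{Ext}^{m+j}(A, \widetilde{B}_j) \otimes \mathrm{Ext}^{n+j}(C, \widetilde{E}_j) \big)$. For each $j$ the ordinary external product from Equation~\ref{eq:ordinaryextprod} supplies a homomorphism into the $2j$-th term $\mathrm{Ext}^{m+n+2j}(A \otimes_R C, \widetilde{B}_j \otimes_R \widetilde{E}_j)$ of the cofinal subsystem above. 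To see that these constitute a morphism of direct systems, I would compare the source transition $\delta \otimes \delta$ with the target transition, which is the composite of the two connecting homomorphisms of the two sequences used at the $j$-th stage of the staircase: Diagram~\ref{diag:extandconnfirst} identifies the first with the shift in the first variable (no sign) and Diagram~\ref{diag:extandconnsecond} identifies the second with the shift in the second variable (sign $(-1)^{m+j+1}$). The relevant square therefore commutes up to this sign, which I would absorb into the $j$-th product map by a factor $\varepsilon_j$ satisfying $\varepsilon_{j+1} = (-1)^{m+j+1}\varepsilon_j$; the squares then commute strictly and the colimit defines $\vee$. I expect this step, rather than the mere termwise application of $\vee$, to be the main obstacle: it is precisely here that the exactness and projective-preservation hypotheses are indispensable (to keep the tensored sequences short exact with projective middle term) and where the sign bookkeeping coming from Diagrams~\ref{diag:extandconnfirst} and~\ref{diag:extandconnsecond} must be reconciled so that the colimit is well defined.

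For the asserted equivalence of the two constructions I would invoke the canonical isomorphism between the resolution and hypercohomology constructions of completed Ext-functors from~\cite{ghe24}, and check that the tensor product of cochains from Equation~\ref{eq:chaintensorprod}, compatible with the differentials by Equation~\ref{eq:wellbehaveddiffs}, represents the same pairing after transport along this isomorphism. Finally, for the second assertion I would specialise the first to $A = C = R$. The hypothesis that the restriction $Mod_R(G) \to Mod_R$ preserves projectives and that $R$ is projective over $R$ ensures, exactly as for ordinary cup products in Equation~\ref{eq:ordinarycupprod}, that $R_{\bullet} \otimes_R R_{\bullet}$ is again a projective resolution of $R \otimes_R R \cong R$, so the resolution hypothesis of the first part is met. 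The natural isomorphism $R \otimes_R R \cong R$ then identifies the target $\widehat{\mathrm{Ext}}^{m+n}(R \otimes_R R, M \otimes_R N)$ with $\widehat{H}_R^{m+n}(G, M \otimes_R N)$, so the external product descends to the desired cup product $\smile$.
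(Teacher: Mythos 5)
Your treatment of the resolution-construction half and of the cup-product specialisation is correct and is essentially the paper's own argument: the same staircase splicing of the tensored short exact sequences (using exactness and preservation of projectives to keep middle terms projective), the same termwise use of the ordinary external product from Equation~\ref{eq:ordinaryextprod}, the same appeal to Proposition~\ref{prop:dirlimcommswithtensor} on the source, and the same sign data from Diagrams~\ref{diag:extandconnfirst} and~\ref{diag:extandconnsecond}. Your device of absorbing the discrepancy $(-1)^{m+j+1}$ into rescaled maps $\varepsilon_j \vee_j$ is a legitimate mild variant of what the paper does: the paper shifts one variable at a time along a sequence $o \in \lbrace 0,1 \rbrace^{\mathbb{N}}$ (the cube, Diagram~\ref{diag:extprodcube}) and observes that the signs on the target transitions cancel along a cofinal subsystem (note that your $\varepsilon_j$ has period four, which is exactly this cancellation). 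Both yield the same homomorphism on the colimit.

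The genuine gap is in your last step, the asserted equivalence with the hypercohomology construction. You propose to check that the cochain-level tensor product of Equation~\ref{eq:chaintensorprod} ``represents the same pairing after transport.'' But in the hypercohomology (Vogel) picture, classes of $\widehat{\mathrm{Ext}}$ are represented by almost chain maps, and the tensor product of two almost chain maps as in Equation~\ref{eq:chaintensorprod} is in general \emph{not} an almost chain map: by Equation~\ref{eq:wellbehaveddiffs}, $d^{m+n}(u_{\bullet} \otimes_R v_{\bullet}) = (d^m u_{\bullet}) \otimes_R v_{\bullet} + (-1)^m u_{\bullet} \otimes_R (d^n v_{\bullet})$, and even when $d^m u_{\bullet}$ is bounded, $(d^m u_{\bullet}) \otimes_R v_{\bullet}$ has nonzero components in infinitely many degrees whenever $v_{\bullet}$ does; so the formula does not even produce cocycles in the Vogel complex, and there is nothing for your verification to verify. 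This failure is precisely what the paper records in the remark immediately following Theorem~\ref{thm:extandcupprod}, citing~\cite{ben92}. The paper's actual argument constructs a different representative: the staircase resolution $B_{\bullet} \otimes_R^o E_{\bullet}$ together with the componentwise morphism $\varphi_{\bullet+m} \otimes_R^o \psi_{\bullet+n}$, in which at each stage exactly one factor is composed with the projection onto the relevant syzygy ($\varphi'$ or $\psi'$), and then uses Diagrams~\ref{diag:liftmorphismone} and~\ref{diag:liftmorphismtwo} to show this is an almost chain map inducing, under the comparison isomorphisms of~\cite{ghe24}, the same homomorphism as the resolution-construction product. Your proof needs this (or an equivalent) construction; as written, the final step would fail.
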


\begin{proof}
Because the existence of cup products follows from the existence of external products as at the end of Section~\ref{sec:ordinarycohomprods}, we only prove the latter where we first use the resolution construction. Since we need to consider direct limit systems for this, let $k, l \in \mathbb{N}_0$ and write $K := m+k$ and $L := n+l$. Then by Diagram~\ref{diag:extandconnfirst}, Diagram~\ref{diag:extandconnsecond} and~\cite[Proposition~III.4.1]{car56} we see that the cube on the next page commutes. As in~\cite[Definition~4.7]{ghe24}, one can construct for any sequence $o \in \lbrace 0, 1 \rbrace^{\mathbb{N}}$ a direct system
\[ \mathrm{Ext}^{m+P(o)_k}(A{,} \, \widetilde{B}_{P(o)_k}) \otimes \mathrm{Ext}^{n+D(o)_k}(C{,} \, \widetilde{E}_{D(o)_k}))_{k \in \mathbb{N}_0} \]
whose homomorphisms from one term to the next are given by
\[ \begin{cases} \delta^{m+{P(o)_k}} \otimes \mathrm{id} &\text{if } P(o)_{k+1} = P(o)_k+1 \\ \mathrm{id} \otimes \delta^{n+D(o)_k} &\text{if } P(o)_{k+1} = P(o)_k \end{cases} \, . \]
According to the below cube, this gives rise to a homomorphism
\begin{align}
\varinjlim_{k \in \mathbb{N}_0} \vee: \varinjlim_{k \in \mathbb{N}_0} \big(\mathrm{Ext}^{m+P(o)_k}(A, \widetilde{B}_{P(o)_k}) \otimes &\mathrm{Ext}^{n+D(o)_k}(C, \widetilde{E}_{D(o)_k})\big) \label{eq:dirlimforext} \\
&\rightarrow \varinjlim_{k \in \mathbb{N}_0} \mathrm{Ext}^{m+n+k}(A \otimes_R C, \widetilde{B}_{P(o)_k} \otimes_R \widetilde{E}_{D(o)_k}) \nonumber
\end{align}
in the direct limit. If the sequence $o$ takes the value $0$ only finitely many times, then there would be $d \in \mathbb{N}_0$ such that the above homomorphism would be of the form
\[ \varinjlim_{k \in \mathbb{N}_0} \vee: \mathrm{Ext}^{m+d}(A, \widetilde{B}_d) \otimes \widehat{\mathrm{Ext}}^n(C, E) \rightarrow \varinjlim_{k \in \mathbb{N}_0} \mathrm{Ext}^{m+n+k}(A \otimes_R C, \widetilde{B}_{P(o)_k} \otimes_R \widetilde{E}_{D(o)_k}) \, . \]
Since we do not wish to consider this to be an external product, the values $0$ and $1$ occur infinitely often in $o$. In particular, the changing signs arising from the homomorphisms on the right hand of Diagram~\ref{diag:extprodcube} cancel each other in the direct limit. Because we assume that tensoring with the sygyzies of the projective resolutions $B_{\bullet}$, $E_{\bullet}$ is exact and preserves projectives, the right hand direct limit in Equation~\ref{eq:dirlimforext} equals $\widehat{\mathrm{Ext}}^{m+n}(A \otimes_R C, B \otimes_R E)$ by the resolution construction. The left hand direct limit in Equation~\ref{eq:dirlimforext} is independent of the choice of the sequence $o$ according to the proof of~\cite[Theorem~4.15]{ghe24} and Diagram~\ref{diag:extprodcube}. Thus, if we choose $o$ to alternate between $0$ and $1$, then Equation~\ref{eq:dirlimforext} becomes
\[ \varinjlim_{k \in \mathbb{N}_0} \vee: \varinjlim_{k \in \mathbb{N}_0} \big(\mathrm{Ext}^{m+k}(A, \widetilde{B}_k) \otimes \mathrm{Ext}^{n+k}(C, \widetilde{E}_k)\big) \rightarrow \widehat{\mathrm{Ext}}^{m+n}(A \otimes_R C, B \otimes_R E) \, . \]\pagebreak

\begin{samepage}
\begin{center}
\rotatebox{90}{%
\begin{tikzcd}[ampersand replacement = \&, column sep = tiny]
  \& {\scriptstyle \mathrm{Ext}^K(A{,} \, \widetilde{B}_k) \otimes \mathrm{Ext}^L(C{,} \, \widetilde{E}_l)} \arrow[dl, "{\scriptstyle \delta^K \otimes \mathrm{id}}"] \arrow[rr, "{\scriptstyle \vee}" near end] \arrow[dddd, "{\scriptstyle \mathrm{id} \otimes \delta^L}"]
    \& \& {\scriptstyle \mathrm{Ext}^{K+L}(A \otimes_R C{,} \, \widetilde{B}_k \otimes_R \widetilde{E}_l)} \arrow[dl, "{\scriptstyle \delta^{K+L}}"] \arrow[dddd, "{\scriptstyle (-1)^K \delta^{K+L}}"] \\
  {\scriptstyle \mathrm{Ext}^{K+1}(A{,} \, \widetilde{B}_{k+1}) \otimes \mathrm{Ext}^L(C{,} \, \widetilde{E}_l)} \arrow[rr, crossing over, "{\scriptstyle \vee}" near end] \arrow[dddd, "{\scriptstyle \mathrm{id} \otimes \delta^L}"]
    \& \& {\scriptstyle \mathrm{Ext}^{K+L+1}(A \otimes_R C{,} \, \widetilde{B}_{k+1} \otimes_R \widetilde{E}_l)} \\ \\ \\
  \& {\scriptstyle \mathrm{Ext}^K(A{,} \, \widetilde{B}_k) \otimes \mathrm{Ext}^{L+1}(C{,} \, \widetilde{E}_{l+1})} \arrow[dl, "{\scriptstyle \delta^K \otimes \mathrm{id}}" near start] \arrow[rr, "{\scriptstyle \vee}" near end]
    \& \& {\scriptstyle \mathrm{Ext}^{K+L+1}(A \otimes_R C{,} \, \widetilde{B}_k \otimes_R \widetilde{E}_{l+1})} \arrow[dl, "{\scriptstyle \delta^{K+L+1}}"] \\
  {\scriptstyle \mathrm{Ext}^{K+1}(A{,} \, \widetilde{B}_{k+1}) \otimes \mathrm{Ext}^{L+1}(C{,} \, \widetilde{E}_{l+1})} \arrow[rr, "{\scriptstyle \vee}" near end]
    \& \& {\scriptstyle \mathrm{Ext}^{K+L+2}(A \otimes_R C{,} \, \widetilde{B}_{k+1} \otimes_R \widetilde{E}_{l+1})} \arrow[from=uuuu, crossing over, "{\scriptstyle (-1)^{K+1} \delta^{K+L+1}}"] \\
\end{tikzcd}
}
\end{center}
{\vspace{-110mm}
\begin{equation}\label{diag:extprodcube}
{} \quad {}
\end{equation}
${} \quad {}$ \vspace{100mm}}
\end{samepage}

By Proposition~\ref{prop:dirlimcommswithtensor}, this results in the desirerd external product
\[ \vee := \varinjlim_{k \in \mathbb{N}_0} \vee: \widehat{\mathrm{Ext}}^m(A, B) \otimes \widehat{\mathrm{Ext}}^n(C, E) \rightarrow \widehat{\mathrm{Ext}}^{m+n}(A \otimes_R C, B \otimes_R E) \, . \]
In order to translate this to the hypercohomology construction, consider two almost chain maps $\varphi_{\bullet+m}: A[m]_{\bullet} \rightarrow B_{\bullet}$ and $\psi_{\bullet+n}: C[n]_{\bullet} \rightarrow E_{\bullet}$. For $k \in \mathbb{N}_0$ write $\pi_k: B_k \rightarrow \widetilde{B}_k$ for the morphism to the $k^{\text{th}}$ syzygy and define $\varphi_{m+k}' := \pi_k \circ \varphi_{m+k}: A_{m+k} \rightarrow \widetilde{B}_k$ where we define $\psi_{n+k}': C_{n+k} \rightarrow \widetilde{E}_k$ analogously. According to~\cite[Lemma~6.7]{ghe24} and the proof of~\cite[Lemma~6.12]{ghe24}, there is $\kappa \in \mathbb{N}_0$ such that $(\varphi_{m+k})_{k \geq 2{\kappa}}$ is a chain map that gives rise to the element $\varphi_{m+2{\kappa}}'+\mathrm{Im}(\mathrm{Hom}(a_{m+2{\kappa}}, \widetilde{B}_{2{\kappa}})$ in $\mathrm{Ext}^{m+2{\kappa}}(A, \widetilde{B}_{2{\kappa}})$. We see by the proofs of~\cite[Lemma~6.7]{ghe24} and of~\cite[Lemma~6.12]{ghe24} that
\[ \delta^{m+K}\big(\varphi_{m+K}'+\mathrm{Im}(\mathrm{Hom}_{\mathcal{C}}(a_{m+K}, \widetilde{B}_K) \big) = \varphi_{m+K+1}'+\mathrm{Im}(\mathrm{Hom}_{\mathcal{C}}(a_{m+K+1}, \widetilde{B}_{K+1}) \]
for any $K \geq 2{\kappa}$. Analogously, there is $\lambda \in \mathbb{N}_0$ such that $(\psi_{n+l})_{l \geq 2{\lambda}}$  is a chain map that gives rise to the element $\psi_{n+2{\lambda}}'+\mathrm{Im}(\mathrm{Hom}_{\mathcal{C}}(c_{n+2{\lambda}}, \widetilde{E}_{2{\lambda}})$ in $\mathrm{Ext}_R^{n+2{\lambda}}(C, \widetilde{E}_{2{\lambda}})$. Given $K \geq 2{\kappa}$, $L \geq 2{\lambda}$, the external product of $(\varphi_{m+k})_{k \geq K}$ and $(\psi_{n+l})_{l \geq L}$ arises from
\[ \varphi_{m+K}' \otimes_R \psi_{n+L}': A_{m+K} \otimes_R C_{n+L} \rightarrow \widetilde{B}_K \otimes_R \widetilde{E}_L \]
according to Equation~\ref{eq:ordinaryextprod}. Observe that the diagrams
\begin{equation}\label{diag:liftmorphismone}
\begin{tikzcd}
    A_{m+K} \otimes_R C_{n+L} \arrow[rr, "\varphi_{m+K} \otimes_R \psi_{n+L}'"] \arrow[ddrr, "\varphi_{m+K}' \otimes_R \psi_{n+L}'"] & & B_K \otimes_R \widetilde{E}_L \arrow[dd, "\pi_K \otimes_R \mathrm{id}"] \\ \\
    & & \widetilde{B}_K \otimes_R \widetilde{E}_L
\end{tikzcd}
\end{equation}
and
\begin{equation}\label{diag:liftmorphismtwo}
\begin{tikzcd}
    A_{m+K} \otimes_R C_{n+L} \arrow[rr, "\varphi_{m+K}' \otimes_R \psi_{n+L}"] \arrow[ddrr, "\varphi_{m+K}' \otimes_R \psi_{n+L}'"] & & \widetilde{B}_K \otimes_R E_L \arrow[dd, "\mathrm{id} \otimes_R \pi_L"] \\ \\
    & & \widetilde{B}_K \otimes_R \widetilde{E}_L
\end{tikzcd}
\end{equation}
commute. In order that the external product of $\varphi_{\bullet+m}$ and $\psi_{\bullet+n}$ is again an almost chain map modulo chain homotopy, a choice of a projective resolution of $B \otimes_R E$ is required. Define the projective resolution $B_{\bullet} \otimes_R^o E_{\bullet}$ by setting for any $k \in \mathbb{N}_0$ the $k^{\text{th}}$ term to be
\[ (B_{\bullet} \otimes_R^o E_{\bullet})_k := \begin{cases} B_{P(o)_k} \otimes \widetilde{E}_{D(o)_k} &\text{if } P(o)_{k+1} = P(o)_k +1 \\
\widetilde{B}_{P(o)_k} \otimes E_{D(o)_k} &\text{if } P(o)_{k+1} = P(o)_k \end{cases} \]
and extend it to be zero in negative degrees. We define the componentwise morphism
\[ \varphi_{\bullet+m} \otimes_R^o \psi_{\bullet+n}: (A_{\bullet} \otimes_R C_{\bullet})[m+n]_{\bullet} \rightarrow B_{\bullet} \otimes_R^o E_{\bullet} \]
by setting the $k^{\text{th}}$ degree correspondingly to
\begin{align*}
\varphi_{m+P(o)_k} \otimes_R \psi_{n+D(o)_k}' \circ &p_{k+1, A_{m+P(o)_k} \otimes_R C_{n+D(o)_k}}: \\
&(A_{\bullet} \otimes_R C_{\bullet})_{m+n+k} \rightarrow B_{P(o)_k} \otimes_R \widetilde{E}_{D(o)_k} \text{ or} \\
\varphi_{m+P(o)_k}' \otimes_R \psi_{n+D(o)_k} \circ &p_{k+1, A_{m+P(o)_k} \otimes_R C_{n+D(o)_k}}: \\
&(A_{\bullet} \otimes_R C_{\bullet})_{m+n+k} \rightarrow \widetilde{B}_{P(o)_k} \otimes_R E_{D(o)_k}
\end{align*}
whenever $m+P(o)_k, n+D(o)_k \geq 0$ and to be zero otherwise. Due to the construction of external products through the resolution construction, Diagram~\ref{diag:liftmorphismone} and Diagram~\ref{diag:liftmorphismtwo}, $\varphi_{\bullet+m} \otimes_R^o \psi_{\bullet+n}$ is a chain map in degrees $k$ for which $P(o)_k \geq 2{\kappa}$, $D(o)_k \geq 2{\lambda}$ and thus an almost chain map. We conclude that
\begin{align*}
\widehat{\mathrm{Ext}}^m &(A, B) \otimes \widehat{\mathrm{Ext}}^n(C, E) \rightarrow \widehat{\mathrm{Ext}}^{m+n}(A \otimes_R C, B \otimes_R E) \\
(\varphi_{\bullet+m}+\widehat{\mathrm{Null}}&(A[m]_{\bullet}, B_{\bullet})) \otimes (\psi_{\bullet+n}+\widehat{\mathrm{Null}}(C[n]_{\bullet}, E_{\bullet})) \\
&\mapsto (\varphi_{\bullet+m} \otimes_R^o \psi_{\bullet+n}+\widehat{\mathrm{Null}}((A_{\bullet} \otimes_R C_{\bullet})[m+n]_{\bullet}, B_{\bullet} \otimes_R^o E_{\bullet})
\end{align*}
is the external product given by the hypercohomology construction.
\end{proof}

\begin{rem}
External products for completed Ext-functors cannot be defined through tensor products of almost chain maps as in Equation~\ref{eq:chaintensorprod} because this does not yield an almost chain map in general~\cite[p.~110]{ben92}.
\end{rem}

\begin{exl}\label{exl:extandcupprods}
All conditions of Theorem~\ref{thm:extandcupprod} are satisfied in the following instances.
\begin{enumerate}
    \item $G$ is a discrete group, $R$ a principal ideal domain and the restriction of the $R[G]$-modules $A$, $B$, $C$, $E$ to $R$-modules is projective.
    \item $G$ is a profinite group, $R$ a profinite commutative ring with a unique maximal open ideal and the restriction of the profinite $R{\llbracket}G{\rrbracket}$-modules $A$, $B$, $C$, $E$ to $S$-modules is projective. The $p$-adic integers $\mathbb{Z}_p$ are an example of such a profinite ring where the restriction of any $p$-torsionfree profinite $\mathbb{Z}_p{\llbracket}G{\rrbracket}$-module to a $\mathbb{Z}_p$-module is projective.
\end{enumerate}
\end{exl}

\begin{proof}
As both assertions are proved analogously, we write $R[G]$ for either the discrete or completed group ring of $G$ over $R$. Namely, the category $Mod_R(G)$ is equivalent to the category $Mod_{R[G]}$ of discrete/profinite $R[G]$-module where one invokes \cite[Proposition~5.3.6]{rib10} in the profinite case. By Example~\ref{exl:tensorprods}, there are tensor products satisfying the preconditions to construct external products for completed Ext-functors and thus cup products for complete cohomology. Since the conditions of Theorem~\ref{thm:extandcupprod} involve projectives, we note that every projective module can be realised as a retract of a free module where the profinite case is due to~\cite[Proposition~5.4.2]{rib10}. Any free $R[G]$-module is free as an $R$-module where the profinite case is by~\cite[Corollary~5.7.2]{rib10}. Thus, the restriction functor $Mod_R(G) \rightarrow Mod_R$ preserves projectives where $R$ as an $R$-module is projective. \\

Denote by $F$ any of $A$, $B$, $C$ or $E$. Because the restriction functor preserves projectives and is exact for being a forgetful functor, any projective resolution $F_{\bullet}$ of $F$ as an $R[G]$-module  is also a projective resolution of $R$-modules. As $F$ is projective as an $R$-module, the short exact sequence $0 \rightarrow \widetilde{F}_1 \rightarrow F_0 \rightarrow F \rightarrow 0$ is split and hence $\widetilde{F}_1$ projective as an $R$-module. Inductively, we conclude for every $k \in \mathbb{N}_0$ that $\widetilde{F}_k$ is projective as an $R$-module. According to~\cite[p.~29]{bro82} and~\cite[Proposition~5.5.3]{rib10}, the functors
\[ - \otimes_R \widetilde{F}_k, \widetilde{F}_k \otimes_R -: Mod_R(G) \rightarrow Mod_R(G) \]
are exact. According to~\cite[Theorem~9.8]{rot02} and~\cite[Proposition~7.5.1]{wil98}, $\widetilde{F}_k$ is also free as an $R$-module. It then follows from the proof of~\cite[Proposition~3.3.2]{sym00} that the above functors preserve projectives. According to~\cite[Lemma~10.4.4]{wil98}, $A_{\bullet} \otimes_R C_{\bullet}$ is a projective resolution of $A \otimes_R C$.
\end{proof}

\begin{rem}
External products of completed Ext-functors and thus cup products of complete cohomology cannot be readily established in the context of condensed mathematics. Namely, the tensor product of two condensed projective modules need not be projective according to~\cite[Proposition~3.7]{sch22}. Even if one considers solid modules, which are condensed modules satisfying a form of completion, tensoring with the projective solid $\underline{\mathbb{Z}}$-module $\prod_{2^{2^{\aleph_0}}} \underline{\mathbb{Z}}$ is not exact according to A.\! I.\! Efimov (yet to appear).
\end{rem}

In contrast to external products and cup products, we construct Yoneda products in full generality. To this end, recall that for every $n \in \mathbb{Z}$ the completed unenriched Ext-functor $\widehat{\mathrm{Ext}}_{\mathcal{C}}^n(-, -): \mathcal{C}^{\mathrm{op}} \times \mathcal{C} \rightarrow \mathbf{Ab}$ forms a bifunctor that is additive in both variables by~\cite[Proposition~5.8]{ghe24} and~\cite[Proposition~6.5]{ghe24}. We generalise hereby the constructions of Yoneda products found in~\cite[p.~110]{ben92} and prove that they are equivalent.

\begin{thm}\label{thm:yonedaprods}
Let $F, H, J \in \mathrm{obj}(\mathcal{C})$. If $\otimes$ denotes the tensor product in $\mathbf{Ab}$, then for every $m, n \in \mathbb{Z}$ Yoneda products
    \[ \circ: \widehat{\mathrm{Ext}}_{\mathcal{C}}^n(H, J) \otimes \widehat{\mathrm{Ext}}_{\mathcal{C}}^m(F, H) \rightarrow \widehat{\mathrm{Ext}}_{\mathcal{C}}^{m+n}(F, J) \]
    can be defined for completed unenriched Ext-functors equivalently by the hypercohomology construction as composition of almost chain maps or by the naïve construction as a direct limit of the composition functors of the functors $[-, -]_{\mathcal{C}}$ from~\cite[Proposition~5.1]{ghe24}.
\end{thm}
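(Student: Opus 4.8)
The plan is to give the two constructions separately and then reconcile them, mirroring the structure of the proof of Theorem~\ref{thm:extandcupprod} but with composition of morphisms in place of the tensor product. I would begin with the hypercohomology construction, which is the cleaner of the two. A class in $\widehat{\mathrm{Ext}}_{\mathcal{C}}^m(F, H)$ is represented by an almost chain map $\varphi_{\bullet + m}: F[m]_{\bullet} \rightarrow H_{\bullet}$, i.e.\ a family $\{\varphi_{m+k}: F_{m+k} \rightarrow H_k\}_k$ that is a genuine chain map in all sufficiently large degrees, taken modulo $\widehat{\mathrm{Null}}(F[m]_{\bullet}, H_{\bullet})$; likewise a class in $\widehat{\mathrm{Ext}}_{\mathcal{C}}^n(H, J)$ is represented by $\psi_{\bullet + n}: H[n]_{\bullet} \rightarrow J_{\bullet}$. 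I would define the Yoneda product by composing componentwise,
\[ (\psi \circ \varphi)_{m+n+k} := \psi_{n+k} \circ \varphi_{m+n+k}: F_{m+n+k} \rightarrow J_k \, , \]
yielding a componentwise morphism $F[m+n]_{\bullet} \rightarrow J_{\bullet}$. If $\varphi_{\bullet+m}$ is a chain map for $k \geq K$ and $\psi_{\bullet+n}$ for $k \geq L$, then $\psi \circ \varphi$ is a chain map for $k \geq \max\{K, L\}$, so it is automatically again an almost chain map, with no hypotheses on tensor products or syzygies. This is precisely why Yoneda products exist in full generality, in contrast to the external products of Theorem~\ref{thm:extandcupprod} and the obstruction recorded in the remark following it. Well-definedness modulo $\widehat{\mathrm{Null}}$ is immediate: an element of $\widehat{\mathrm{Null}}$ is eventually zero up to chain homotopy, so pre- or post-composing it with any componentwise morphism is again eventually zero up to chain homotopy. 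By bi-additivity of composition this descends to a homomorphism out of $\widehat{\mathrm{Ext}}_{\mathcal{C}}^n(H, J) \otimes \widehat{\mathrm{Ext}}_{\mathcal{C}}^m(F, H)$.

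For the naïve construction I would work in the stable bifunctors $[-, -]_{\mathcal{C}}$ of \cite[Proposition~5.1]{ghe24}, whose morphism groups $[\widetilde{A}_p, \widetilde{B}_q]_{\mathcal{C}} = \mathrm{Hom}_{\mathcal{C}}(\widetilde{A}_p, \widetilde{B}_q)/\mathcal{P}_{\mathcal{C}}(\widetilde{A}_p, \widetilde{B}_q)$ carry a composition law because $\mathcal{P}_{\mathcal{C}}$, the morphisms factoring through a projective, is a two-sided ideal. Representing a class of $\widehat{\mathrm{Ext}}_{\mathcal{C}}^m(F, H)$ by $\widetilde{\varphi} \in [\widetilde{F}_{m+k}, \widetilde{H}_k]_{\mathcal{C}}$ and one of $\widehat{\mathrm{Ext}}_{\mathcal{C}}^n(H, J)$ by $\widetilde{\psi} \in [\widetilde{H}_{n+l}, \widetilde{J}_l]_{\mathcal{C}}$, I match indices by the cofinal choice $k = n+l$, so that the target $\widetilde{H}_k = \widetilde{H}_{n+l}$ of $\widetilde{\varphi}$ is the source of $\widetilde{\psi}$, and form the stable composite $\widetilde{\psi} \circ \widetilde{\varphi} \in [\widetilde{F}_{(m+n)+l}, \widetilde{J}_l]_{\mathcal{C}}$, a representative of a class in $\widehat{\mathrm{Ext}}_{\mathcal{C}}^{m+n}(F, J)$. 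The crucial point is compatibility with the transition maps $t$ from Section~\ref{sec:constructions}, namely $t(\widetilde{\psi}) \circ t(\widetilde{\varphi}) = t(\widetilde{\psi} \circ \widetilde{\varphi})$. I would prove this by a diagram chase: lifting $\widetilde{\varphi}$ and $\widetilde{\psi}$ along the syzygy short exact sequences to morphisms $\varphi: F_{m+n+l} \rightarrow H_{n+l}$ and $\psi: H_{n+l} \rightarrow J_l$, the composite $\psi \circ \varphi$ is a lift of $\widetilde{\psi} \circ \widetilde{\varphi}$, and since restriction to kernels commutes with composition, its induced map on the next syzygies equals $t(\widetilde{\psi}) \circ t(\widetilde{\varphi})$; independence of $t$ from the chosen lift then yields the identity. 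These squares assemble into a morphism of direct systems, and passing to the colimit while commuting the limit past the tensor product via Proposition~\ref{prop:dirlimcommswithtensor} produces the Yoneda product.

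Finally, to prove that the two constructions agree I would follow the translation in the proof of Theorem~\ref{thm:extandcupprod} with composition in place of the tensor product of cochains. Given an almost chain map $\varphi_{\bullet+m}$, \cite[Lemma~6.7]{ghe24} and the proof of \cite[Lemma~6.12]{ghe24} furnish $\kappa$ with $(\varphi_{m+k})_{k \geq 2\kappa}$ a genuine chain map, and the syzygy projections $\pi_k$ give $\varphi_{m+k}' := \pi_k \circ \varphi_{m+k}$ representing the direct-limit class. Because a genuine chain map descends to the syzygies, in high degree $\pi_k^J \circ \psi_{n+k} = \widetilde{\psi}_{n+k} \circ \pi_{n+k}^H$, so that $(\psi \circ \varphi)_{m+n+k}' = \pi_k^J \circ \psi_{n+k} \circ \varphi_{m+n+k} = \widetilde{\psi}_{n+k} \circ \varphi_{m+n+k}'$, which is exactly the stable composite of the naïve picture. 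Feeding this identity through the chain of isomorphisms of cohomological functors from \cite{ghe24} already invoked in Proposition~\ref{prop:quotinducescan} identifies the two products. I expect this reconciliation to be the main obstacle: the composition of almost chain maps is unexpectedly clean and needs no cancellation argument of the kind required for external products, but carefully tracking the \cite{ghe24} identifications and the cofinal index-matching $k = n+l$ so that the two pairings genuinely coincide on the nose is where the bookkeeping concentrates.
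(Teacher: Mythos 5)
Your proposal is correct and takes essentially the same approach as the paper: composition of almost chain maps modulo chain homotopy for the hypercohomology construction, stable composition on the quotients $[-,-]_{\mathcal{C}}$ passed to the direct limit via Proposition~\ref{prop:dirlimcommswithtensor} for the na\"ive construction, and reconciliation through the fact that almost chain maps are genuine chain maps in high degrees and hence descend to syzygies. The only difference is cosmetic: you verify compatibility of stable composition with the transition maps by an explicit diagram chase, whereas the paper outsources this (and the final identification) to Propositions~5.1--5.2, Definition~6.19 and Lemma~6.20 of~\cite{ghe24}.
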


\begin{proof}
The definition of Yoneda products through the hypercohomology construction is analogous to their definition for unenriched Ext-functors, which is covered in~\cite[p.~166]{gel03} for instance. Namely, if the almost chain map $f[n]_{\bullet+m}: F[m+n]_{\bullet} \rightarrow H[n]_{\bullet}$ is a representative an element in $\widehat{\mathrm{Ext}}_{\mathcal{C}}^m(F, H)$ and $g_{\bullet+n}: H[n]_{\bullet} \rightarrow J_{\bullet}$ a representative of an element in $\widehat{\mathrm{Ext}}_{\mathcal{C}}^n(H, J)$, then their composition $g_{\bullet+n} \circ f[n]_{\bullet+m}: F[m+n]_{\bullet} \rightarrow J_{\bullet}$ is again an almost chain map. If $f_{\bullet+m}'$ is chain homotopic to $f_{\bullet+m}$ and $g_{\bullet+n}'$ chain homotopic to $g_{\bullet+n}$, then $g_{\bullet+n} \circ f[n]_{\bullet+m}$ is chain homotopic to $g_{\bullet+n} \circ f'[n]_{\bullet+m}$ which is in turn chain homotopic to $g_{\bullet+n}' \circ f'[n]_{\bullet+m}$. Hence,
\[ g_{\bullet+n} \circ f[n]_{\bullet+m}+\widehat{\mathrm{Null}}(F[m+n]_{\bullet}, J_{\bullet}) \]
is a well defined element in $\widehat{\mathrm{Ext}}_{\mathcal{C}}^{m+n}(F, J)$. Since this operation is bi-additive, it constitutes a Yoneda product
\[ \widehat{\mathrm{Ext}}_{\mathcal{C}}^n(H, J) \otimes \widehat{\mathrm{Ext}}_{\mathcal{C}}^m(F, H) \rightarrow \widehat{\mathrm{Ext}}_{\mathcal{C}}^{m+n}(F, J) \, . \]
Moving over to the naïve construction, one can see in~\cite[Proposition~5.1]{ghe24} that for any $k \in \mathbb{N}_0$ with $n+m+k, n+k \geq 0$ the composition functor of morphisms descends to a bifunctor
\[ \circ: [\widetilde{H}_{n+k}, \widetilde{J}_k]_{\mathcal{C}} \times [\widetilde{F}_{n+m+k}, \widetilde{H}_{n+k}]_{\mathcal{C}} \rightarrow [\widetilde{F}_{n+m+k}, \widetilde{J}_k]_{\mathcal{C}} \, . \]
In particular, if we take an element $f_{n+m+k}' + \mathcal{P}_{\mathcal{C}}(\widetilde{F}_{n+m+k}, \widetilde{H}_{n+k})$ in $[\widetilde{F}_{n+m+k}, \widetilde{H}_{n+k}]_{\mathcal{C}}$ and an element $g_{n+k}'+\mathcal{P}_{\mathcal{C}}(\widetilde{H}_{n+k}, \widetilde{J}_k)$ in $[\widetilde{H}_{n+k}, \widetilde{J}_k]_{\mathcal{C}}$, then they give rise to a well defined element $g_{n+k}' \circ f_{n+m+k}'+ \mathcal{P}_{\mathcal{C}}(\widetilde{F}_{n+m+k}, \widetilde{J}_k)$ in $[\widetilde{F}_{n+m+k}, \widetilde{J}_k]_{\mathcal{C}}$. By~\cite[Proposition~5.2]{ghe24} and~\cite[Definition~5.3]{ghe24} we can pass to the direct limit to obtain
\[ \varinjlim_{k \in \mathbb{N}_0} \big([\widetilde{H}_{n+k}, \widetilde{J}_k]_{\mathcal{C}} \times [\widetilde{F}_{n+m+k}, \widetilde{H}_{n+k}]_{\mathcal{C}}\big) \rightarrow \widehat{\mathrm{Ext}}_{\mathcal{C}}^{m+n}(F, J) \, . \]
Because this operation is bi-additive, this yields the second Yoneda product by Proposition~\ref{prop:dirlimcommswithtensor}. Applying the isomorphism $\rho^{n+m}: \widehat{\mathcal{E}xt}_{\mathcal{C}}^{n+m}(F, J) \rightarrow BC_{\mathcal{C}}^{n+m}(F, J)$ from~\cite[Definition~6.19]{ghe24}, we deduce from the proof of~\cite[Lemma~6.20]{ghe24} that
\[ \rho^{n+m}\big(g_{\bullet+n} \circ f[n]_{\bullet+m}+\widehat{\mathrm{Null}})(F[n+m]_{\bullet}, J_{\bullet})\big) = \big(\widetilde{g}_{n+2k} \circ \widetilde{f}_{n+m+2k}+\mathcal{P}_{\mathcal{C}}(\widetilde{F}_{n+m+2k}, \widetilde{J}_{2k})\big)_{k \geq K} \]
where $K \in \mathbb{N}_0$ is chosen such that both $(f_{n+m+k})_{k \geq 2K}$ and $(g_{n+k})_{k \geq 2K}$ are chain maps. This demonstrate that the two Yoneda products agree.
\end{proof}

\section{Properties of cohomology products}\label{sec:cohomprodpropties}

This final section is dedicated to the properties of external, cup and Yoneda products. We show that these cohomology products are natural and associative. In particular, cup products turn complete cohomology and Yoneda products turn completed Ext-functors into a graded ring with identity. The canonical morphism from Ext-functors to their Mislin completion form thus ring homomorphisms. We prove that external and cup products satisfy a version of graded commutativity and that cohomology products are compatible with connecting homomorphisms. Lastly, we demonstrate that our external products generalise those for Tate--Farrell Ext-functors found in the accounts \cite[p.~110]{ben92} and~\cite[pp.~278--279]{bro82}.

\begin{conv}
In this section we assume that any module objects $A$, $B$, $C$ and $E$ satisfy the conditions of Theorem~\ref{thm:extandcupprod} so that the external products
\[ \vee: \widehat{\mathrm{Ext}}^m(A, B) \otimes \widehat{\mathrm{Ext}}^n(C, E) \rightarrow \widehat{\mathrm{Ext}}^{m+n}(A  \otimes_R C, B \otimes_R E) \]
and thus the corresponding cup products in complete cohomology exist. Regarding Yoneda products, the objects $F$, $H$, $J$ are assumed to be in an abelian category $\mathcal{C}$ with enough projectives instead.
\end{conv}

\begin{lem}\label{lem:cohomprodsnatural}
(Naturality)
\begin{enumerate}
    \item External products of completed Ext-functors and thus cup products of complete cohomology are natural. More specifically, if $r: X \rightarrow A$, $s: Y \rightarrow C$, $f: B \rightarrow M$ and $g: E \rightarrow N$ are morphisms, then the square
    \begin{center}
    \begin{tikzcd}
    \widehat{\mathrm{Ext}}^m(A{,} \, B) \otimes \widehat{\mathrm{Ext}}^n(C{,} \, E) \arrow[r, "\vee"] \arrow[d, "\widehat{\mathrm{Ext}}^m(r{,} \, f) \otimes \widehat{\mathrm{Ext}}^n(s{,} \, g)"] & \widehat{\mathrm{Ext}}^{m+n}(A \otimes_R C{,} \, B \otimes_R E) \arrow[d, "\widehat{\mathrm{Ext}}^{m+n}(r \otimes_R s{,} \, f \otimes_R g)"] \\
    \widehat{\mathrm{Ext}}^m(X{,} \, M) \otimes \widehat{\mathrm{Ext}}^n(Y{,} \, N) \arrow[r, "\vee"] & \widehat{\mathrm{Ext}}^{m+n}(X \otimes_R Y{,} \, M \otimes_R N)
    \end{tikzcd}
    \end{center}
    commutes. Therefore, cup products of complete cohomology are natural.
    \item Yoneda products of completed Ext-functors
    \[ \circ: \widehat{\mathrm{Ext}}_{\mathcal{C}}^n(H, J) \otimes \widehat{\mathrm{Ext}}_{\mathcal{C}}^m(F, H) \rightarrow \widehat{\mathrm{Ext}}_{\mathcal{C}}^{m+n}(F, J) \]
    are natural in the variable $F$ and $J$.
\end{enumerate}
\end{lem}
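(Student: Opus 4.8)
The plan is to deduce naturality of the completed products from the naturality of their ordinary counterparts, which was already recorded in Diagram~\ref{diag:extprodnat} for external products, and from the associativity of composition for Yoneda products. Throughout I would use the descriptions of the functorial action of completed Ext-functors as direct limits furnished by \cite[Proposition~5.8]{ghe24} and \cite[Proposition~6.5]{ghe24}.

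For the first assertion I would exploit the fact that the external product of Theorem~\ref{thm:extandcupprod} was constructed through the resolution construction as a direct limit $\vee = \varinjlim_{k \in \mathbb{N}_0} \vee$ of ordinary external products on the syzygies $\widetilde{B}_k$, $\widetilde{E}_k$ of the covariant variables, with the first variables $A$, $C$ held fixed. First I would fix $k \in \mathbb{N}_0$ and observe that $f: B \rightarrow M$ induces morphisms $\widetilde{f}_k: \widetilde{B}_k \rightarrow \widetilde{M}_k$ on syzygies by lifting, exactly as in the naïve construction of Section~\ref{sec:constructions}, so that the functorial map $\widehat{\mathrm{Ext}}^m(r, f)$ is the direct limit over $k$ of the ordinary functorial maps $\mathrm{Ext}^{m+k}(r, \widetilde{f}_k)$, and similarly for $s$ and $g$. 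Applying the ordinary naturality square of Diagram~\ref{diag:extprodnat} to the objects $A, \widetilde{B}_k, C, \widetilde{E}_k$ with the morphisms $r, \widetilde{f}_k, s, \widetilde{g}_k$ then yields a commuting square at each level $k$.

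Next I would verify that these level-$k$ squares assemble into a morphism of direct systems, that is, that both the external products and the functorial maps are compatible with the transition homomorphisms of the direct systems, which are the connecting homomorphisms $\delta^{m+k}$. Compatibility of the functorial maps with $\delta$ is the naturality of connecting homomorphisms from Axiom~\ref{axm:delta}, while compatibility of the external products with $\delta$ is precisely Diagram~\ref{diag:extandconnfirst} and Diagram~\ref{diag:extandconnsecond}. Having established this, I would pass to the direct limit, invoking Proposition~\ref{prop:dirlimcommswithtensor} to identify the tensor product of the two direct limits on the left with the direct limit of the tensor products; the limit of the level-$k$ squares then becomes the desired naturality square for completed Ext-functors, and naturality of cup products follows since they descend from external products as at the end of Section~\ref{sec:ordinarycohomprods}. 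I expect this bookkeeping with the transition maps to be the main obstacle, as one must ensure that the alternating signs of Diagram~\ref{diag:extprodcube} cancel in the limit exactly as they did in the construction itself.

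For the second assertion I would work with the hypercohomology construction, where the Yoneda product is composition of almost chain maps. A morphism $p: F' \rightarrow F$ lifts to a chain map $p_{\bullet}: F'_{\bullet} \rightarrow F_{\bullet}$ by the Comparison Theorem~\cite[Theorem~2.2.6]{wei94}, and $\widehat{\mathrm{Ext}}^m(p, H)$ is represented by precomposition with a suitable shift of $p_{\bullet}$, whereas a morphism $q: J \rightarrow J'$ lifts to $q_{\bullet}: J_{\bullet} \rightarrow J'_{\bullet}$ and the induced map in the covariant variable is represented by postcomposition with $q_{\bullet}$. Naturality in $F$ then reduces to the identity $g_{\bullet+n} \circ (f[n]_{\bullet+m} \circ p_{\bullet}) = (g_{\bullet+n} \circ f[n]_{\bullet+m}) \circ p_{\bullet}$, and naturality in $J$ to $(q_{\bullet} \circ g_{\bullet+n}) \circ f[n]_{\bullet+m} = q_{\bullet} \circ (g_{\bullet+n} \circ f[n]_{\bullet+m})$; both are instances of the associativity of composition of componentwise morphisms, which descends to the quotient by $\widehat{\mathrm{Null}}$. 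This second part I expect to be routine.
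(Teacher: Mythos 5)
Your proposal is correct and follows essentially the same route as the paper's own (very terse) proof: for the external products, the paper likewise works with the resolution construction, takes lifts of $f$ and $g$ to the projective resolutions (hence induced maps on syzygies), and combines the ordinary naturality square of Diagram~\ref{diag:extprodnat} with the connecting-homomorphism compatibilities of Diagrams~\ref{diag:extandconnfirst} and~\ref{diag:extandconnsecond} before passing to the direct limit exactly as in the construction of Theorem~\ref{thm:extandcupprod}; for the Yoneda products, the paper also reduces naturality to the definition of the induced maps as pre-/post-composition in the hypercohomology construction. Your write-up merely makes explicit the bookkeeping (level-wise squares, compatibility with transition maps via Axiom~\ref{axm:delta}, and the identification of limits via Proposition~\ref{prop:dirlimcommswithtensor}) that the paper leaves implicit.
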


\begin{proof}
\textbf{(1)} To establish naturality via the resolution construction, let $f_{\bullet}: B_{\bullet} \rightarrow M_{\bullet}$, $g_{\bullet}: C_{\bullet} \rightarrow N_{\bullet}$ be lifts as in~\cite[Definition~4.2]{ghe24}. Then Diagram~\ref{diag:extprodnat}, Diagram~\ref{diag:extandconnfirst} and Diagram~\ref{diag:extandconnsecond} together with the construction of external products in proof of Theorem~\ref{thm:extandcupprod} imply the assertion. \\

\textbf{(2)} This assertion follows from~\cite[Definition~6.4]{ghe24} together with the definition of Yoneda products via the hypercohomology construction.
\end{proof}

\begin{lem}\label{lem:cohomprodsassoc}
(Associativity)
\begin{enumerate}
    \item If there are objects $A$, $B$, $C$, $E$, $F$, $H$ such that $A_{\bullet} \otimes_R C_{\bullet} \otimes_R H_{\bullet}$ is a projective resolution of $A \otimes_R C \otimes_R H$, then
    \[ \forall x \in \widehat{\mathrm{Ext}}^m(A, B), y \in \widehat{\mathrm{Ext}}^n(C, E), z \in \widehat{\mathrm{Ext}}^p(F, H): (x \vee y) \vee z = x \vee (y \vee z) \, . \]
    Therefore, cup products of complete cohomology are associative.
    \item Yoneda products of completed Ext-functors are associative.
\end{enumerate}
\end{lem}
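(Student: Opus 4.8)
The plan for the first assertion is to deduce associativity of the completed external product from the associativity of the \emph{ordinary} external product recorded in Equation~\ref{eq:extprodassoc}, passing it through the direct limit that defines the completed product in Theorem~\ref{thm:extandcupprod}. First I would represent the classes $x$, $y$, $z$ at a common finite stage $k$, so that $x$ arises from an element of $\mathrm{Ext}^{m+k}(A, \widetilde{B}_k)$, $y$ from $\mathrm{Ext}^{n+k}(C, \widetilde{E}_k)$ and $z$ from $\mathrm{Ext}^{p+k}(F, \widetilde{H}_k)$. At each such stage the threefold product is an honest iterated ordinary external product of the syzygies, so the hypothesis that the relevant threefold tensor product $A_{\bullet} \otimes_R C_{\bullet} \otimes_R F_{\bullet}$ is a projective resolution lets me invoke Equation~\ref{eq:extprodassoc} to conclude that $(x \vee y) \vee z$ and $x \vee (y \vee z)$ agree at that stage. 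Taking the direct limit over $k$ and using Proposition~\ref{prop:dirlimcommswithtensor} to identify the tensor product of direct limits with the direct limit of tensor products then transports the finite-level identity to the desired equality of completed external products.

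The technical heart of this argument is the bookkeeping for the direct systems, and this is where I expect the main obstacle to lie. The completed external product in Theorem~\ref{thm:extandcupprod} is formed by interleaving the syzygies of the two second-variable resolutions along a sequence $o \in \{0,1\}^{\mathbb{N}}$, whereas a threefold product requires interleaving three families of syzygies; I therefore have to verify that \emph{both} bracketings can be realised over one common threefold interleaving direct system. This is resolved by the independence of the left-hand direct limit in Equation~\ref{eq:dirlimforext} from the choice of $o$, which was established in the proof of Theorem~\ref{thm:extandcupprod} via \cite[Theorem~4.15]{ghe24} together with the commuting cube in Diagram~\ref{diag:extprodcube}. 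Choosing a single sequence that cycles through all three coordinates makes the two iterated products literally equal at each finite stage, so no residual sign or indexing discrepancy survives in the limit. Associativity of cup products is then immediate by specialising to $A = C = F = R$, since the tensor product of projective resolutions of $R$ remains a projective resolution of $R$ as already used in Theorem~\ref{thm:extandcupprod}.

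For the second assertion I would work with the hypercohomology description of Yoneda products from Theorem~\ref{thm:yonedaprods}, in which the product is induced by composition of almost chain maps. Composition of morphisms of chain complexes is strictly associative, so if almost chain maps $f$, $g$, $h$ represent the three classes, then $(h \circ g) \circ f$ and $h \circ (g \circ f)$ are literally the same morphism $F[m+n+p]_{\bullet} \rightarrow J_{\bullet}$. The only point to check is that this on-the-nose associativity survives passage to the quotient by $\widehat{\mathrm{Null}}$, which it does because null-homotopy is compatible with composition on either side; this is precisely the well-definedness already verified in the proof of Theorem~\ref{thm:yonedaprods}. Consequently the Yoneda product is associative, and unlike the external case this part carries no genuine obstacle, since the product is defined directly on representatives and requires no interleaving of direct systems.
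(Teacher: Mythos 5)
Your treatment of the second assertion is correct and matches the paper's own argument: associativity of Yoneda products follows directly from the hypercohomology description, since composition of almost chain maps is strictly associative and descends to the quotient by $\widehat{\mathrm{Null}}$.

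For the first assertion there is a genuine gap, and it sits exactly at the point you yourself call ``the technical heart'' and then dismiss. The two bracketings are not computed over an interleaving sequence of your choosing: each is a composite of two twofold products, and each twofold product carries its own interleaving, so $(x \vee y)\vee z$ and $x \vee (y \vee z)$ arrive as direct limits indexed by two \emph{specific} threefold sequences $r, s \in \lbrace 0,1,2 \rbrace^{\mathbb{N}}$ whose counting functions are essentially different --- with the alternating choices made in Theorem~\ref{thm:extandcupprod}, the syzygy indices at total stage $k$ are roughly $(k/4, k/4, k/2)$ for one bracketing and $(k/2, k/4, k/4)$ for the other, so they agree for only finitely many $k$. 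Consequently $x$, $y$, $z$ cannot be ``represented at a common finite stage'' in a way that makes both iterated products literally equal there: your finite-level appeal to Equation~\ref{eq:extprodassoc} compares elements that live in different groups of the two direct systems. What would close the gap is the statement that the direct limit is independent of the choice of threefold sequence, but the result you cite --- independence of the twofold limit in Equation~\ref{eq:dirlimforext} from $o \in \lbrace 0,1 \rbrace^{\mathbb{N}}$, via \cite[Theorem~4.15]{ghe24} and Diagram~\ref{diag:extprodcube} --- does not deliver it: the comparison criterion underlying that result requires the two counting functions to agree infinitely often, which fails for generic pairs of threefold sequences and in particular for the pair $r$, $s$ produced by the two bracketings. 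The paper's proof supplies precisely this missing ingredient: it constructs, by an explicit two-stage inductive procedure (swapping the roles of the values where the counts over- and under-shoot), an interpolating sequence $u \in \lbrace 0,1,2 \rbrace^{\mathbb{N}}$ whose counting function agrees infinitely often with both $F(r)$ and $F(s)$, so that the three limits are canonically identified; it also tracks the extra signs $(-1)^{m+F_1(r)_k}$ and $(-1)^{m+F_1(r)_k+n+F_2(r)_k}$ that appear when the third Ext-factor is tensored into Diagram~\ref{diag:extprodcube}, which you assert cancel without argument. Your reduction of cup-product associativity to the external case and your use of ordinary associativity as the finite-level input are both fine, but the limit-comparison step is the actual content of the lemma and is absent from your proposal.
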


\begin{proof}
For Yoneda products this follows from their definition through the hypercohomology construction. \\

In order to establish associativity of external products, we first need to introduce some notation. In the proof of Theorem~\ref{thm:extandcupprod} we labelled direct systems by sequences $o \in \lbrace 0, 1 \rbrace^{\mathbb{N}}$ taking the values $0$ and $1$ infinitely many times that lead to the definition of external products via the resolution construction. Now we consider sequences $o \in \lbrace 0, 1, 2 \rbrace^{\mathbb{N}}$ that take each value infinitely often. For $i \in \lbrace 0, 1, 2 \rbrace$ and $k \in \mathbb{N}$ we define $F_i(o)_k$ to be the number of times that the sequence $o$ has taken the value $i$ up to and including the $k^{\text{th}}$ term. We set $F_i(o)_0 := 0$ and define
\[ F(o) := (F_1(o)_k, F_2(o)_k, F_3(o)_k)_{k \in \mathbb{N}_0} \, . \]
One already has to use that $A_{\bullet} \otimes_R C_{\bullet} \otimes_R F_{\bullet}$ is a projective resolution of $A \otimes_R C \otimes_R H$ to conclude in Equation~\ref{eq:extprodassoc} that external products of Ext-functors are associative. Then the element $(x \vee y) \vee z$ arises from a homomorphism going from the direct limit of
\begin{equation}\label{eq:tripletensorfirst}
\big(\mathrm{Ext}^{m+F_1(r)_k}(A, \widetilde{B}_{F_1(r)_k}) \otimes \mathrm{Ext}^{n+F_2(r)_k}(C, \widetilde{E}_{F_2(r)_k}) \otimes \mathrm{Ext}^{p+F_3(r)_k}(F, \widetilde{H}_{F_3(r)_k})\big)_{k \in \mathbb{N}_0}
\end{equation}
to the direct limit of
\begin{equation}\label{eq:tripletensorsecond}
\big(\mathrm{Ext}^{m+n+p+k}(A \otimes_R C \otimes_R F, \widetilde{B}_{F_1(r)_k} \otimes_R \widetilde{E}_{F_2(r)_k} \otimes_R \widetilde{H}_{F_2(r)_k}) \big)_{k \in \mathbb{N}_0}
\end{equation}
where $r \in \lbrace 0, 1, 2 \rbrace^{\mathbb{N}}$ takes each value infinitely often. The element $x \vee (y \vee z)$ arises from an analogous homomorphism between two direct limits arising from a different sequence $s \in \lbrace 0, 1, 2 \rbrace^{\mathbb{N}}$ taking each value infinitely often. \\

In order to relate the corresponding direct systems, we tensor the terms in Diagram~\ref{diag:extprodcube} with $\mathrm{Ext}^{p+F_3(r)_k}(F, \widetilde{H}_{F_3(r)_k})$ and its corresponding identity map. If $\mathrm{Ext}^{p+F_3(r)_k}(F, \widetilde{H}_{F_3(r)_k})$ appears as the right most term in the resulting tensor  products, then no sign changes are required. If it occurs as the middle or left most term, then we need to multiply the corresponding connecting homomorphisms $\delta^{n+k+p}$ and $\delta^{n+k+p+1}$ on the left hand side with a factor of either $(-1)^{m+F_1(r)_k}$ or of $(-1)^{m+F_1(p)_k+n+F_2(r)_k}$ according to Diagram~\ref{diag:extandconnfirst} and Diagram~\ref{diag:extandconnsecond}. This brings us into the situation of the proof of~\cite[Theorem~4.15]{ghe24}. More specifically, the direct limits in Equation~\ref{eq:tripletensorfirst} and Equation~\ref{eq:tripletensorsecond} arising from $r$ and $s$ agree whenever there are infinitely many indices $k \in \mathbb{N}_0$ for which the terms of the sequences $(F(r)_k)_{k \in \mathbb{N}_0}$ and $(F(s)_k)_{k \in \mathbb{N}_0}$ coincide. \\

However, they coincide only in finitely many terms in general. Thus, it suffices to construct a sequence $u \in \lbrace 0, 1, 2 \rbrace^{\mathbb{N}}$ inductively such that $F(u)$ has infinitely many terms in common with both $F(r)$ and $F(s)$. Note that the latter sequences coincide for $k = 0$. Assume that $\kappa \in \mathbb{N}_0$ is the last index for which $F(r)_{\kappa} = F(s)_{\kappa}$ and set $u_k := r_k$ for $1 \leq k \leq \kappa$. Inductively assume that there is $K \in \mathbb{N}_0$ such that $F(u)_K = F(r)_K$. As $F(u)_K \neq F(s)_K$, there are $\lbrace h, i, j \rbrace = \lbrace 0, 1, 2 \rbrace$ such that $F_h(u) > F_h(s)$ and $F_i(u) < F_i(s)$. We apply the following procedure for $k \geq K$. If $s_k = j$, we set $u_k := j$. In case $s_k = h$, set $u_k := i$ and in case $s_k = i$, set $u_k := h$. This yields an index $K' \geq K$ for which $F_h(u)_{K'} = F_h(s)_{K'}$ or $F_i(u)_{K'} = F_i(s)_{K'}$. If this is the case for the value $h$ and we have that $F_j(u)_{K'} \neq F_j(s)_{K'}$, then we repeat the above procedure where we swap the roles of the values $h$ and $j$. This provides us an index $K'' \geq K'$ such that $F(u)_{K''} = F(s)_{K''}$. The same procedures yield an index $K''' \geq K''$ such that $F(u)_{K'''} = F(r)_{K'''}$ from which we can inductively construct the desired desired sequence $u \in \lbrace 0, 1, 2 \rbrace$.
\end{proof}

\begin{lem}\label{lem:cohomrings}
(Cohomology rings)\newline
For any object $A$ denote by $1_A \in \widehat{\mathrm{Ext}}^0(A, A)$ the element represented by the almost chain map $\mathrm{id}_{\bullet}: A_{\bullet} \rightarrow A_{\bullet}$.
\begin{enumerate}
    \item Then
    \[ \forall n \in \mathbb{Z}, x \in \widehat{H}_R^n(G, M): x \smile 1_R = x = 1_R \smile x \, . \]
    In particular, cup products turn $\bigoplus_{n \in \mathbb{Z}} \widehat{H}_R^n(G, R)$ into a graded ring with identity $1_R$.
    \item If $B$ is any other object, then
    \[ \forall n \in \mathbb{Z}, y \in \widehat{\mathrm{Ext}}_{\mathcal{C}}^n(A, B), z \in \widehat{\mathrm{Ext}}_{\mathcal{C}}^n(B, A): y \circ 1_A = y \text{ and } 1_A \circ z = z \, . \]
    In particular, Yoneda products turn $\bigoplus_{n \in \mathbb{Z}} \widehat{\mathrm{Ext}}_{\mathcal{C}}^n(A, A)$ into a graded ring with identity $1_A$.
\end{enumerate}
\end{lem}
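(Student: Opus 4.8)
The plan is to handle the two parts separately, starting with the Yoneda product where the hypercohomology construction makes the unit axiom essentially formal. By Theorem~\ref{thm:yonedaprods} the Yoneda product of two classes is represented by the composition of representing almost chain maps, and $1_A$ is represented by the genuine chain map $\mathrm{id}_\bullet\colon A_\bullet \to A_\bullet$ regarded as an almost chain map of degree $0$. Thus if $y \in \widehat{\mathrm{Ext}}_{\mathcal{C}}^n(A, B)$ is represented by $g_{\bullet+n}\colon A[n]_\bullet \to B_\bullet$, then $y \circ 1_A$ is represented by $g_{\bullet+n} \circ \mathrm{id}[n]_\bullet = g_{\bullet+n}$, so $y \circ 1_A = y$; dually $1_A \circ z = z$ for $z \in \widehat{\mathrm{Ext}}_{\mathcal{C}}^n(B, A)$ because pre-composition by the (shifted) identity is the identity. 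Together with the associativity of Lemma~\ref{lem:cohomprodsassoc} and the bi-additivity recorded before Theorem~\ref{thm:yonedaprods}, this exhibits $\bigoplus_n \widehat{\mathrm{Ext}}_{\mathcal{C}}^n(A, A)$ as a graded ring with identity $1_A$.

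For the cup product I would first pin down the unit in the resolution construction. Since the hypercohomology and resolution descriptions of external (hence cup) products agree by Theorem~\ref{thm:extandcupprod}, and since $1_R = \Phi^0(1)$ is the image under the canonical morphism of the ordinary unit $1 \in H_R^0(G, R)$ of Section~\ref{sec:ordinarycohomprods} (both being represented by $\mathrm{id}_R = \mathrm{id}_\bullet$), the class $1_R \in \widehat{H}_R^0(G, R) = \varinjlim_k H_R^k(G, \widetilde{R}_k)$ is represented at level $k$ by $\iota_k := \delta^{k-1} \circ \cdots \circ \delta^0(1)$, the iterated image of $1$ under the connecting homomorphisms of the syzygy sequences $0 \to \widetilde{R}_{k+1} \to R_k \to \widetilde{R}_k \to 0$.

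Next I would represent an arbitrary $x \in \widehat{H}_R^n(G, M)$ by a class $x_k \in H_R^{n+k}(G, \widetilde{M}_k)$ in the colimit, so that $x \smile 1_R$ is computed layer-wise by the ordinary external products $x_k \vee \iota_k$. Applying Diagram~\ref{diag:extandconnsecond} repeatedly peels each connecting homomorphism off the second factor at the cost of a sign, reducing $x_k \vee \iota_k$ to $\pm\, \delta \cdots \delta(x_k \vee 1)$; the ordinary unit property $x_k \vee 1 = x_k$ from Section~\ref{sec:ordinarycohomprods} (under $\widetilde{M}_k \otimes_R R \cong \widetilde{M}_k$) then identifies this with the iterated connecting homomorphism of $x_k$, which is exactly the transition map of the target direct limit. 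Passing to the colimit --- where, just as in the existence proof of Theorem~\ref{thm:extandcupprod}, the alternating choice of the indexing sequence $o$ cancels the accumulated signs --- yields $x \smile 1_R = x$. The identity $1_R \smile x = x$ follows symmetrically from the sign-free Diagram~\ref{diag:extandconnfirst} and the left unit property. With associativity (Lemma~\ref{lem:cohomprodsassoc}) and bi-additivity this makes $\bigoplus_n \widehat{H}_R^n(G, R)$ a graded ring with identity $1_R$, the grading using $R \otimes_R R \cong R$.

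The main obstacle is the book-keeping in the cup-product case: one must check that the connecting homomorphisms released from the second factor of $x_k \vee \iota_k$ really coincide with the transition maps of the target colimit (via the identification of $\widetilde{M}_k \otimes_R \widetilde{R}_k$ with a syzygy of $M \otimes_R R$), and that the signs from Diagram~\ref{diag:extandconnsecond} cancel in the limit. Both issues are governed by the same cofinality-and-sign argument already carried out for the \emph{existence} of the product in Theorem~\ref{thm:extandcupprod}, so the verification can lean on that machinery rather than reprove it.
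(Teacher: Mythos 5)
Your treatment of part (2) coincides with the paper's: the paper also disposes of the Yoneda unit by noting that, in the hypercohomology construction, composition with the (shifted) identity chain map is literally the identity, and then invokes bi-additivity and Lemma~\ref{lem:cohomprodsassoc}.

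For part (1), however, your route has a concrete soft spot at the sign step, and the way you propose to close it would not work as stated. Peeling the $k$ connecting homomorphisms off $\iota_k$ via Diagram~\ref{diag:extandconnsecond} gives $x_k \vee \iota_k = (-1)^{(n+k)k}\,\delta\cdots\delta(x_k \vee 1)$, since each peel costs $(-1)^{\deg x_k} = (-1)^{n+k}$. Now in the plain, unsigned resolution-construction system for $\widehat{H}^n_R(G, M \otimes_R R)$, the elements $\delta\cdots\delta(x_k \vee 1)$ and $x_k \vee 1$ have the \emph{same} image in the limit (the $\delta$'s are its transition maps), so your computation outputs $(-1)^{(n+k)k}x$, and this sign is not removed by choosing the indexing sequence $o$ to alternate: it depends on $k$ (take $n+k$ and $k$ both odd), and indeed comparing levels $k$ and $k+1$ under this unsigned identification produces answers differing by $(-1)^{n+1}$, an inconsistency. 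What actually makes the signs harmless is different from what you claim: the direct system that the completed external product maps into (the right-hand column of Diagram~\ref{diag:extprodcube}) carries the signs $(-1)^{K}\delta$ on exactly those transition maps which raise the second index, and these coincide with your peeling signs --- your peeling is nothing but running those signed transition maps backwards. Once this is observed, the detour through level $k$ is circular: the class $x \otimes 1_R$ is already represented at the stage where the second index is $0$, namely by $x_k \otimes 1$, so one should work with $x_k \vee 1$ directly.

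That is precisely the paper's proof, and it is worth internalising because it avoids the sign bookkeeping altogether: represent $1_R$ by its \emph{level-zero} representative, the class of the augmentation $\varepsilon\colon R_0 \to R$ (equivalently of $\mathrm{id}_\bullet$), and $x$ by a cocycle $f\colon R_{n+k} \to \widetilde{M}_k$. Then $x \smile 1_R$ is represented at a single finite stage by $(f \otimes_R \varepsilon)\circ p$, and a chain-level commuting square, together with naturality of $-\otimes_R R \cong -$ and the fact that $(\mathrm{id}\otimes_R\varepsilon)\circ p\colon R_\bullet \otimes_R R_\bullet \to R_\bullet$ is a chain equivalence of projective resolutions of $R$, identifies its class in the direct limit with $x$; no connecting homomorphism is ever applied to the unit, so no signs arise. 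Your appeal to the ordinary unit property of Section~\ref{sec:ordinarycohomprods} is the right final ingredient, but the peeling step preceding it, with its justification ``the alternating choice of $o$ cancels the accumulated signs,'' is the gap.
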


\begin{proof}
\textbf{(1)} We prove first that $1_R$ is a unit via the resolution construction. For any element $x \in \widehat{H}_R^n(G, M)$ there is $k \in \mathbb{N}_0$ and a morphism $f: R_{n+k} \rightarrow \widetilde{M}_k$ such that the element in $H_R^{n+k}(G, \widetilde{M}_k)$ represented by $f$ is mapped to $x$ in the direct limit. By~\cite[Proposition~6.7]{ghe24} the augmentation map $\varepsilon: R_0 \rightarrow R$ gives rise to the element in $H_R^0(G, R)$ represented by $\mathrm{id}_{\bullet}: R_{\bullet} \rightarrow R_{\bullet}$ which is mapped to $1_R \in \widehat{H}_R^0(G, R)$ in the direct limit. By naturality of the isomorphisms $N \otimes_R R \cong N$, the diagram
\begin{center}
\begin{tikzcd}
    (R_{\bullet} \otimes_R R_{\bullet})_{n+k} \arrow[rrrr, "(\mathrm{id} \otimes_R \varepsilon) \circ p_{n+k+1, R_{n+k} \otimes_R R_0}"] \arrow[d, "(f \otimes_R \varepsilon) \circ p_{n+k+1, R_{n+k} \otimes_R R_0}"] & & & & R_{n+k} \otimes_R R \arrow[r, "\cong"] & R_k \arrow[d, "f"] \\
    M \otimes_R R \arrow[rrrrr, "\cong"] & & & & & M
\end{tikzcd}
\end{center}
commutes. By construction of the cup product for Ext-functors in Equation~\ref{eq:ordinaryextprod}, the left hand side gives rise to the cup product $[f] \smile 1_R \in H_R^{n+k}(G, M \otimes_R R)$ while the right hand side represents the element $[f] \in H_R^{n+k}(G, M)$. Taking direct limits, the left hand side is mapped to $x \smile 1_R \in \widehat{H}_R^n(G, M)$ while the right hand side is mapped to $x \in \widehat{H}_R^n(G, M)$. The isomorphisms on the top and bottom side result in isomorphisms of complete cohomology by functoriality. The homomorphisms
\[ (\mathrm{id}_{R_l} \otimes_R \varepsilon) \circ p_{l+1, R_l \otimes_R R_0}: (R_{\bullet} \otimes_R R_{\bullet})_l \rightarrow R_l \otimes_R R \]
form a chain equivalence of projective resolutions of $R$ as is noted in~\cite[p.~111]{bro82}. This proves that $x \smile 1_R = x$. An analogous argument demonstrates that $1_R \smile x = x$. Because connecting homomorphisms are additive and cup products of group cohomology are bi-additive according to the very end of Section~\ref{sec:ordinarycohomprods}, cup products of complete cohomology are bi-additive by passing through direct limits of the resolution construction. Because cup products of complete cohomology are associative according to Lemma~\ref{lem:cohomprodsassoc}, they turn $\bigoplus_{n \in \mathbb{Z}} \widehat{H}_R^n(G, R)$ into a graded ring with identity $1_R$. \\

\textbf{(2)}  By the hypercohomology construction, Yoneda products are bi-additive where $1_A$ is a unit. Since they are also associative according to Lemma~\ref{lem:cohomprodsassoc}, Yoneda products turn $\bigoplus_{n \in \mathbb{Z}} \widehat{\mathrm{Ext}}_{\mathcal{C}}^n(A, A)$ into a graded ring with identity $1_A$.
\end{proof}

\begin{prop}\label{prop:canisringhomom}
(Preservation by canonical morphisms)\newline
Let $\Phi^{\bullet}: \mathrm{Ext}^{\bullet}(A, -) \rightarrow \widehat{\mathrm{Ext}}^{\bullet}(A, -)$ denote the canonical morphism of the Mislin completion.
\begin{enumerate}
    \item Then $\Phi^{\bullet}$ preserves external and cup products. More specifically,
    \[ \forall x \in \mathrm{Ext}^m(A, B), y \in \mathrm{Ext}^n(C, E): \Phi^m(x) \vee \Phi^n(y) = \Phi^{m+n}(x \vee y) \, . \]
    If the ring structure derived from the cup product in Lemma~\ref{lem:cohomrings} is taken, then 
    \[ \bigoplus_{n \in \mathbb{Z}} \Phi^n: \bigoplus_{n \in \mathbb{Z}} H_R^n(G, R) \rightarrow \bigoplus_{n \in \mathbb{Z}} \widehat{H}_R^{\bullet}(G, R) \]
    is a ring homomorphism.
    \item Moreover, $\Phi^{\bullet}$ preserves Yoneda products, meaning that
    \[ \forall \xi \in \mathrm{Ext}_{\mathcal{C}}^m(H, J), \upsilon \in \mathrm{Ext}_{\mathcal{C}}^n(F, H): \Phi^n(\upsilon) \circ \Phi^m(\xi) = \Phi^{m+n}(\upsilon \circ \xi) \, . \]
    If one takes the ring structure derived from the Yoneda product, then
    \[ \bigoplus_{n \in \mathbb{Z}} \Phi^n: \bigoplus_{n \in \mathbb{Z}} \mathrm{Ext}_{\mathcal{C}}^n(F, F) \rightarrow \bigoplus_{n \in \mathbb{Z}} \widehat{\mathrm{Ext}}_{\mathcal{C}}^{\bullet}(F, F) \]
    is a ring homomorphism.
\end{enumerate}
\end{prop}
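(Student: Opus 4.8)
The plan is to exploit that, by Proposition~\ref{prop:quotinducescan}, the canonical morphism $\Phi^{\bullet}$ is induced by the quotient map $\mathrm{Hyp}_{\mathcal{C}}(A_{\bullet}, B_{\bullet})_{\bullet} \rightarrow \mathrm{Vog}_{\mathcal{C}}(A_{\bullet}, B_{\bullet})_{\bullet}$ and, equivalently, is the canonical homomorphism $\mathrm{Ext}^n(A,B) \rightarrow \varinjlim_k \mathrm{Ext}^{n+k}(A, \widetilde{B}_k) = \widehat{\mathrm{Ext}}^n(A,B)$ into the direct limit of the resolution construction, that is, the structure map of the stage $k = 0$. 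Both products and both descriptions of $\Phi^{\bullet}$ are thereby available at the level of representatives, and in each case the compatibility will reduce to the observation that the product on completed Ext-functors restricts, at the initial stage of its defining direct system, to the ordinary product.

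For the first assertion I would argue through the resolution construction used to build $\vee$ in Theorem~\ref{thm:extandcupprod}. There the completed external product is obtained as $\varinjlim_k \vee$ of the ordinary external products
\[ \vee: \mathrm{Ext}^{m+k}(A, \widetilde{B}_k) \otimes \mathrm{Ext}^{n+k}(C, \widetilde{E}_k) \rightarrow \mathrm{Ext}^{m+n+k}(A \otimes_R C, \widetilde{B}_k \otimes_R \widetilde{E}_k), \]
where $\widehat{\mathrm{Ext}}^m(A,B) \otimes \widehat{\mathrm{Ext}}^n(C,E)$ is identified with the corresponding direct limit of tensor products by Proposition~\ref{prop:dirlimcommswithtensor}. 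At stage $k = 0$ one has $\widetilde{B}_0 = B$ and $\widetilde{E}_0 = E$, so this stage is exactly the ordinary external product of Equation~\ref{eq:ordinaryextprod}. Since $\Phi^m$, $\Phi^n$ and $\Phi^{m+n}$ are the structure maps of the respective direct limits out of the stage $k = 0$, the identity $\Phi^m(x) \vee \Phi^n(y) = \Phi^{m+n}(x \vee y)$ is precisely the statement that the induced pairing of direct systems commutes with the structure maps out of the initial stage, which holds by construction of the colimit pairing. The cup-product case follows by specialising $A = C = R$ and invoking the natural isomorphism $R \otimes_R R \cong R$, using that $\Phi^{\bullet}$ is a morphism of cohomological functors and hence natural. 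Preservation of the identity, $\Phi^0(1_R) = 1_R$, holds because both $1_R \in H_R^0(G,R)$ and $1_R \in \widehat{H}_R^0(G,R)$ are represented by $\mathrm{id}_{\bullet}: R_{\bullet} \rightarrow R_{\bullet}$ (Lemma~\ref{lem:cohomrings} and Section~\ref{sec:ordinarycohomprods}), which the quotient map preserves; together with the additivity of each $\Phi^n$ this yields the asserted ring homomorphism.

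For the second assertion I would use the hypercohomology description of Yoneda products from Theorem~\ref{thm:yonedaprods}, where the product is the composition of almost chain maps. An ordinary class $\xi \in \mathrm{Ext}_{\mathcal{C}}^m(H,J)$ is represented by an honest chain map between shifted projective resolutions, which is in particular an almost chain map; by Proposition~\ref{prop:quotinducescan} the morphism $\Phi^m$ sends the class of this chain map to the class of the very same map in $\widehat{\mathrm{Ext}}_{\mathcal{C}}^m(H,J)$. Since the Yoneda product on both sides is induced by composition of the (almost) chain map representatives, and the composite of two honest chain maps coincides with their composite viewed as almost chain maps, the identity $\Phi^n(\upsilon) \circ \Phi^m(\xi) = \Phi^{m+n}(\upsilon \circ \xi)$ follows at once. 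The identity $1_F$ is represented by $\mathrm{id}_{\bullet}: F_{\bullet} \rightarrow F_{\bullet}$ in both $\mathrm{Ext}_{\mathcal{C}}^0(F,F)$ and $\widehat{\mathrm{Ext}}_{\mathcal{C}}^0(F,F)$, so $\Phi^0(1_F) = 1_F$, and once more additivity of $\Phi^n$ gives the ring homomorphism.

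The routine verifications are all absorbed into identifying $\Phi^{\bullet}$ with the appropriate representative-level map; the only genuinely delicate point is the first assertion, where one must be certain that the completed external product, built from the direct system indexed by a sequence $o$ alternating between $0$ and $1$, really does restrict to the ordinary external product at $k = 0$ and that this restriction is compatible with the structure maps $\Phi$. This is guaranteed by the independence of the direct limit from the choice of $o$ established in the proof of Theorem~\ref{thm:extandcupprod}, so nothing beyond careful bookkeeping is required.
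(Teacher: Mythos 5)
Your proposal is correct and takes essentially the same route as the paper's own proof: for the external/cup products you, like the paper, use Proposition~\ref{prop:quotinducescan} to identify $\Phi^{\bullet}$ with the stage-zero structure maps of the direct limits in the resolution construction (via Proposition~\ref{prop:dirlimcommswithtensor} on the source side), and for the Yoneda products you use the hypercohomology description, under which $\Phi^{\bullet}$ sends a chain-map class to the same map viewed as an almost chain map, with the identity elements handled through Lemma~\ref{lem:cohomrings} in both cases. The only difference is that you spell out the colimit bookkeeping that the paper leaves implicit.
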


\begin{proof}
\textbf{(1)} The canonical morphism $\Phi^n: \mathrm{Ext}^n(A, -) \rightarrow \widehat{\mathrm{Ext}}^n(A, -)$ can be taken as the canonical morphism to the direct limit occurring in the resolution construction according to the proof of Proposition~\ref{prop:quotinducescan}. This together with the construction of external products via the resolution construction implies that $\Phi^{\bullet}$ preserves external and cup products. By the proof of Lemma~\ref{lem:cohomrings}, it induces the desired ring homomorphism because the identity element of $\bigoplus_{n \in \mathbb{Z}} H_R^n(G, R)$ arising from the augmentation map $\varepsilon: R_0 \rightarrow R$ is sent to the identity $1_R$ of $\bigoplus_{n \in \mathbb{Z}} \widehat{H}_R^{\bullet}(G, R)$. \\

\textbf{(2)} The definition of the Yoneda product via the hypercohomology construction from the proof of Theorem~\ref{thm:yonedaprods} is analogous to the one of Yoneda products of Ext-functors. Namely, instead of considering almost chain maps modulo chain homotopy we take chain maps modulo chain homotopy and compose them. Thus, $\Phi^{\bullet}$ preserves Yoneda products by Proposition~\ref{prop:quotinducescan}. If the element $1_F$ from Lemma~\ref{lem:cohomrings} is also taken to be a chain map modulo chain homotopy living in $\mathrm{Ext}_{\mathcal{C}}^0(F, F)$, then it represents the identity of $\bigoplus_{n \in \mathbb{Z}} \mathrm{Ext}_{\mathcal{C}}^n(F, F)$ with the ring structure derived from the Yoneda product. Because $\bigoplus_{n \in \mathbb{Z}} \Phi^n(F)$ maps the identity element to the identity element by Proposition~\ref{prop:quotinducescan}, it is a ring homomorphism.
\end{proof}

\begin{prop}\label{prop:extprodsgradedcomm}
(A form of commutativity)\newline
Assume that the tensor product $\otimes_R$ is commutative and take the homomorphism $\mathrm{swap}$ as in Diagram~\ref{diag:extprodcommity}. Then there are commutative diagrams
\begin{equation}\label{diag:swapdirlim}
\begin{tikzcd}
    {\scriptstyle \mathrm{Ext}^{m+k}(A{,} \, \widetilde{B}_k) \otimes \mathrm{Ext}^{n+k}(C{,} \, \widetilde{E}_k)} \arrow[r, "{\scriptstyle (-1)^{(m+k)(n+k)} \mathrm{swap}}"] \arrow[d, "{\scriptstyle \delta^{m+k} \otimes \mathrm{id}}"] & {\scriptstyle \mathrm{Ext}^{n+k}(C{,} \, \widetilde{E}_k) \otimes \mathrm{Ext}^{m+k}(A{,} \, \widetilde{B}_k)} \arrow[d, "{\scriptstyle (-1)^{n+k} \mathrm{id} \otimes \delta^{m+k}}"] \\
    {\scriptstyle \mathrm{Ext}^{m+k+1}(A{,} \, \widetilde{B}_{k+1}) \otimes \mathrm{Ext}^{n+k}(C{,} \, \widetilde{E}_k)} \arrow[r, shift left = 4pt, "{\scriptstyle (-1)^{(m+k+1)(n+k)} \mathrm{swap}}"] \arrow[d, "{\scriptstyle (-1)^{m+k+1} \mathrm{id} \otimes \delta^{n+k}}" near start] & {\scriptstyle \mathrm{Ext}^{n+k}(C{,} \, \widetilde{E}_k) \otimes \mathrm{Ext}^{m+k+1}(A{,} \, \widetilde{B}_{k+1})} \arrow[d, "{\scriptstyle \delta^{n+k} \otimes \mathrm{id}}"] \\
    {\scriptstyle \mathrm{Ext}^{m+k+1}(A{,} \, \widetilde{B}_{k+1}) \otimes \mathrm{Ext}^{n+k+1}(C{,} \, \widetilde{E}_{k+1})} \arrow[r, shift left = 4pt, "{\scriptstyle (-1)^{(m+k+1)(n+k+1)} \mathrm{swap}}"] & {\scriptstyle \mathrm{Ext}^{n+k+1}(C{,} \, \widetilde{E}_{k+1}) \otimes \mathrm{Ext}_R^{m+k+1}(A{,} \, \widetilde{B}_{k+1})}
\end{tikzcd}
\end{equation}
The homomorphism in their direct limit
\[ \widehat{\mathrm{swap}}: \widehat{\mathrm{Ext}}^m(A, B) \otimes \widehat{\mathrm{Ext}}^n(C, E) \rightarrow \widehat{\mathrm{Ext}}^n(C, E) \otimes \widehat{\mathrm{Ext}}^m(A, B) \]
renders external products commutative in the sense that the diagram
\begin{equation}\label{diag:fakecommity}
\begin{tikzcd}
    \widehat{\mathrm{Ext}}^m(A{,} \, B) \otimes \widehat{\mathrm{Ext}}^n(C{,} \, E) \arrow[r, "\vee"] \arrow[d, "\widehat{\mathrm{swap}}"] & \widehat{\mathrm{Ext}}^{m+n}(A \otimes_R C{,} \, B \otimes_R E) \arrow[d, "\widehat{\mathrm{Ext}}^{m+n}(\mathrm{swap}{,} \, \mathrm{swap})"] \\
    \widehat{\mathrm{Ext}}^n(C{,} \, E) \otimes \widehat{\mathrm{Ext}}^m(A{,} \, B) \arrow[r, "\vee"] & \widehat{\mathrm{Ext}}^{m+n}(B \otimes_R E{,} \, A \otimes_R C)
\end{tikzcd}
\end{equation}
commutes. Cup products of complete cohomology satisfy the same form of commutativity.
\end{prop}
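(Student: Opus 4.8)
The plan is to reduce everything to the finite-stage graded commutativity of ordinary external products established in Diagram~\ref{diag:extprodcommity} and Equation~\ref{eq:extprodgradedcomm}, and then to pass to the direct limit of the resolution construction while managing the $k$-dependent signs exactly as the sign cancellation in the proof of Theorem~\ref{thm:extandcupprod}. First I would record the finite-stage input: applying Diagram~\ref{diag:extprodcommity} with the coefficient modules $\widetilde{B}_k$, $\widetilde{E}_k$ in place of $B$, $E$ and the degrees $m+k$, $n+k$ in place of $m$, $n$ yields, for each $k \in \mathbb{N}_0$, a commuting square relating the ordinary external product $\vee$ on $\mathrm{Ext}^{m+k}(A, \widetilde{B}_k) \otimes \mathrm{Ext}^{n+k}(C, \widetilde{E}_k)$, the signed swap $(-1)^{(m+k)(n+k)}\mathrm{swap}$, and the induced morphism $\mathrm{Ext}^{m+n+2k}(\mathrm{swap}_{\bullet}, \mathrm{swap})$ on the targets. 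These are precisely the stages whose direct limits compute the two sides of Diagram~\ref{diag:fakecommity}.

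Next I would verify that the two squares making up Diagram~\ref{diag:swapdirlim} commute. On the level of underlying maps this is immediate from the naturality of the symmetry isomorphism $\mathrm{swap}$ of $\otimes$ in $\mathbf{Ab}$: permuting two tensor factors commutes with applying a connecting homomorphism $\delta$ to either factor, so $\mathrm{swap} \circ (\delta \otimes \mathrm{id}) = (\mathrm{id} \otimes \delta) \circ \mathrm{swap}$ and likewise for the other leg. It then remains to check that the prescribed signs agree along both routes. For the top square both composites carry the total sign $(-1)^{(m+k+1)(n+k)}$, and for the bottom square both carry $(-1)^{(m+k+1)(n+k)}$ as well, using $n+k+2 \equiv n+k \pmod 2$; this is the sole purpose of the factors $(-1)^{n+k}$ and $(-1)^{m+k+1}$ attached to the vertical maps, which reconcile the genuinely $k$-dependent signs $(-1)^{(m+k)(n+k)}$ on the horizontal swaps.

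With commutativity in hand, I would pass to the colimit. The left-hand (resp.\ right-hand) column of Diagram~\ref{diag:swapdirlim} is the transition map of the direct system computing $\widehat{\mathrm{Ext}}^m(A, B) \otimes \widehat{\mathrm{Ext}}^n(C, E)$ (resp.\ $\widehat{\mathrm{Ext}}^n(C, E) \otimes \widehat{\mathrm{Ext}}^m(A, B)$) via Proposition~\ref{prop:dirlimcommswithtensor} and the resolution construction, except that extra signs $(-1)^{m+k+1}$, $(-1)^{n+k}$ have been inserted on every other leg. The key point, which is the same mechanism as in the proof of Theorem~\ref{thm:extandcupprod}, is that inserting signs $\epsilon_k \in \lbrace \pm 1 \rbrace$ on the transition maps of a direct system does not change its colimit: rescaling the $k$-th term by the automorphism $\eta_k$ determined by $\eta_0 = 1$ and $\eta_{k+1} = \epsilon_k \eta_k$ furnishes an isomorphism of direct systems onto the unsigned one. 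Hence the signed systems of Diagram~\ref{diag:swapdirlim} still compute the relevant tensor products of completed Ext-functors, and the commuting squares assemble into a well defined homomorphism $\widehat{\mathrm{swap}}$ in the direct limit. Combining this with the finite-stage squares of the first step and the fact that $\mathrm{Ext}^{m+n+2k}(\mathrm{swap}_{\bullet}, \mathrm{swap})$ descends to the morphism $\widehat{\mathrm{Ext}}^{m+n}(\mathrm{swap}, \mathrm{swap})$ of cohomological functors (functoriality of completed Ext in both variables, \cite[Proposition~5.8]{ghe24} and~\cite[Proposition~6.5]{ghe24}), the outer Diagram~\ref{diag:fakecommity} commutes; restricting to the cofinal set of even $k$ identifies the surviving sign as $(-1)^{mn}$, matching the graded commutativity of Equation~\ref{eq:extprodgradedcomm}. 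Finally, for cup products I would specialise to $A = C = R$, using the standing hypotheses (restriction preserves projectives, $R$ projective in $Mod_R$) so that $R_{\bullet} \otimes_R R_{\bullet}$ resolves $R$ and $\vee$ descends to $\smile$ as at the end of Section~\ref{sec:ordinarycohomprods}; since $\mathrm{swap}$ on $R \otimes_R R \cong R$ is the identity, the same diagram yields the asserted form of commutativity of cup products in complete cohomology.

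The hard part will be the third step: making rigorous that the genuinely $k$-dependent signs $(-1)^{(m+k)(n+k)}$, which do \emph{not} commute with the plain unsigned transition maps of the external-product direct system, are harmless in the colimit. The honest content is the rescaling argument identifying the signed and unsigned systems, together with the bookkeeping confirming that the compensating vertical signs in Diagram~\ref{diag:swapdirlim} are exactly those produced by this rescaling; everything else is either the already-established finite-stage graded commutativity or the naturality of the symmetry of $\otimes$ in $\mathbf{Ab}$.
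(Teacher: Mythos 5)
Your proposal is correct and follows the same overall route as the paper: the finite-stage input is graded commutativity of ordinary external products (Diagram~\ref{diag:extprodcommity}, Equation~\ref{eq:extprodgradedcomm}) applied with coefficients $\widetilde{B}_k$, $\widetilde{E}_k$ in degrees $m+k$, $n+k$, these squares are assembled into a direct system compatible with the transition maps of the resolution construction, and Diagram~\ref{diag:fakecommity} is obtained in the colimit; your explicit sign verification for Diagram~\ref{diag:swapdirlim} (both composites carrying $(-1)^{(m+k+1)(n+k)}$) is exactly what the paper compresses into ``commutes by definition.'' The one place where you genuinely diverge is the treatment of the $k$-dependent signs on the transition maps: the paper concatenates four copies of Diagram~\ref{diag:swapdirlim}, so that on the cofinal subsystem of stages divisible by four the signed and unsigned composites coincide literally and the colimit is visibly $\widehat{\mathrm{Ext}}^m(A,B) \otimes \widehat{\mathrm{Ext}}^n(C,E)$, whereas you rescale the $k^{\text{th}}$ term by the accumulated sign $\eta_k$ to produce an isomorphism of direct systems onto the unsigned one. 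Both mechanisms are valid and in fact produce the same map $\widehat{\mathrm{swap}}$, since your $\eta_k$ is trivial precisely at stages divisible by four; your rescaling argument is the more general and reusable statement (it disposes of any sign pattern at once), while the paper's concatenation is more self-contained in that it never leaves the given system. The remaining assembly step --- that the finite-stage squares commute with the transition maps on all four corners --- is handled by you by appeal to the proof of Theorem~\ref{thm:extandcupprod} and functoriality of the completed Ext-functors, which matches the paper's invocation of Diagrams~\ref{diag:extandconnfirst} and~\ref{diag:extandconnsecond} with the factor $(-1)^m$ reassigned, so no gap there.
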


\begin{proof}
Diagram~\ref{diag:swapdirlim} commutes by definition. If we apply Proposition~\ref{prop:dirlimcommswithtensor}, the homomorphisms on the left hand side of Diagram~\ref{diag:swapdirlim} do not immediately yield $\widehat{\mathrm{Ext}}^m(A, B) \otimes \widehat{\mathrm{Ext}}^n(C, E)$ in the direct limit as in resolution construction. However, if we concatenate four copies of Diagram~\ref{diag:swapdirlim}, the sign issues vanish yielding the direct limit of the desired form. For the same reason the homomorphisms on the right hand side result in the correct direct limit from which we obtain the homomorphism $\widehat{\mathrm{swap}}$. Next, we form with Diagram~\ref{diag:extprodcommity} the following direct system of commuting squares. We connect its left hand homomorphisms via Diagram~\ref{diag:swapdirlim} to each other. We connect its top and its bottom homomorphisms via Diagram~\ref{diag:extandconnfirst} and Diagram~\ref{diag:extandconnsecond} where the factor of $(-1)^m$ is assigned to the term $\mathrm{id} \otimes \delta^n$ instead of the term $\delta^{m+n}$ in the latter diagram. According to Equation~\ref{eq:extprodgradedcomm}, the right hand morphisms are of the form $\mathrm{Ext}^{m+n+2k}(\mathrm{swap}, \mathrm{swap})$ and connected by connecting homomorphism. Then Diagram~\ref{diag:fakecommity} results as the direct limit of these squares.
\end{proof}

\begin{lem}\label{lem:cohomprodsandconn}
(Relations with connecting homomorphisms)
\begin{enumerate}
    \item Let $0 \rightarrow B \rightarrow B' \rightarrow B'' \rightarrow 0$ and $0 \rightarrow E \rightarrow E' \rightarrow E'' \rightarrow 0$ be short exact sequences in $Mod$. Then the diagrams
    \begin{equation}\label{diag:complextandconnfirst}
    \begin{tikzcd}
        \widehat{\mathrm{Ext}}^m(A{,} \, B) \otimes \widehat{\mathrm{Ext}}^n(C{,} \, E) \arrow[r, "\vee"] \arrow[d, "\widehat{\delta}^m \otimes \mathrm{id}"] & \widehat{\mathrm{Ext}}^{m+n}(A \otimes_R C{,} \, B \otimes_R E) \arrow[d, "\widehat{\delta}^{m+n}"] \\
        \widehat{\mathrm{Ext}}^{m+1}(A{,} \, B'') \otimes \widehat{\mathrm{Ext}}^n(C{,} \, E) \arrow[r, "\vee"] & \widehat{\mathrm{Ext}}^{m+n+1}(A \otimes_R C{,} \, B'' \otimes_R E)
    \end{tikzcd}
    \end{equation}
    and
    \begin{equation}\label{diag:complextandconnsecond}
    \begin{tikzcd}
        \widehat{\mathrm{Ext}}^m(A{,} \, B) \otimes \widehat{\mathrm{Ext}}^n(C{,} \, E) \arrow[r, "\vee"] \arrow[d, "\mathrm{id} \otimes \widehat{\delta}^n"] & \widehat{\mathrm{Ext}}^{m+n}(A \otimes_R C{,} \, B \otimes_R E) \arrow[d, "(-1)^m \widehat{\delta}^{m+n}"] \\
        \widehat{\mathrm{Ext}}^m(A{,} \, B) \otimes \widehat{\mathrm{Ext}}^{n+1}(C{,} \, E'') \arrow[r, "\vee"] & \widehat{\mathrm{Ext}}^{m+n+1}(A \otimes_R C{,} \, B \otimes_R E'')
    \end{tikzcd}
    \end{equation}
    commute. Cup products of complete cohomology satisfy the same relations with connecting homomorphisms.
    \item For any $F, H, J \in \mathrm{obj}(\mathcal{C})$ and any short exact sequence $0 \rightarrow J \rightarrow J' \rightarrow J'' \rightarrow 0$ the diagram
    \begin{center}
    \begin{tikzcd}
        \widehat{\mathrm{Ext}}_{\mathcal{C}}^n(H{,} \, J) \otimes \widehat{\mathrm{Ext}}_{\mathcal{C}}^m(F{,} \, H) \arrow[r, "\circ"] \arrow[d, "\widehat{\delta}^n \circ \mathrm{id}"] & \widehat{\mathrm{Ext}}_{\mathcal{C}}^{m+n}(F{,} \, J) \arrow[d, "\widehat{\delta}^{m+n}"] \\
        \widehat{\mathrm{Ext}}_{\mathcal{C}}^{n+1}(H{,} \, J'') \otimes \widehat{\mathrm{Ext}}_{\mathcal{C}}^m(F{,} \, H) \arrow[r, "\circ"] & \widehat{\mathrm{Ext}}_{\mathcal{C}}^{m+n+1}(F{,} \, J'')
    \end{tikzcd}
    \end{center}
    commutes.
\end{enumerate}
\end{lem}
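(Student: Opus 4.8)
The plan is to follow the template already used for naturality (Lemma~\ref{lem:cohomprodsnatural}) and associativity (Lemma~\ref{lem:cohomprodsassoc}): reduce each assertion to the corresponding compatibility for the \emph{uncompleted} products established in Section~\ref{sec:ordinarycohomprods}, namely Diagram~\ref{diag:extandconnfirst} and Diagram~\ref{diag:extandconnsecond}, and then transport it through the direct limits appearing in the explicit constructions of the products from Theorem~\ref{thm:extandcupprod} and Theorem~\ref{thm:yonedaprods}. The single structural input that makes this work is that the connecting homomorphism $\widehat{\delta}$ of a Mislin completion is, under the resolution construction, realised as a direct limit of the ordinary connecting homomorphisms $\delta$; this is part of the statement from~\cite{ghe24} that the resolution construction yields a cohomological functor, and I would invoke it at the outset.

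For the first assertion I would work through the resolution construction of the external product as in the proof of Theorem~\ref{thm:extandcupprod}. Given the short exact sequence $0 \rightarrow B \rightarrow B' \rightarrow B'' \rightarrow 0$, I would first use the Horseshoe Lemma to choose the projective resolutions $B_{\bullet}$, $B_{\bullet}'$, $B_{\bullet}''$ compatibly, so that the syzygies sit in short exact sequences $0 \rightarrow \widetilde{B}_k \rightarrow \widetilde{B}_k' \rightarrow \widetilde{B}_k'' \rightarrow 0$ for every $k \in \mathbb{N}_0$; the connecting homomorphism $\widehat{\delta}^m$ is then the direct limit of the ordinary connecting homomorphisms $\delta^{m+k}$ attached to these sequences (and similarly for the $E$-side sequence). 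At each finite stage the square of Diagram~\ref{diag:extandconnfirst} commutes for the external product of $\mathrm{Ext}^{m+k}(A,-)$ with $\mathrm{Ext}^{n+k}(C,\widetilde{E}_k)$, after tensoring the chosen syzygy sequence with $\widetilde{E}_k$; these squares assemble into a direct system whose limit, computed via Proposition~\ref{prop:dirlimcommswithtensor}, is exactly Diagram~\ref{diag:complextandconnfirst}. Diagram~\ref{diag:complextandconnsecond} is obtained in the same fashion from Diagram~\ref{diag:extandconnsecond}, and the sign $(-1)^m$ is independent of the index $k$ and so survives the passage to the limit unchanged. The cup-product case then follows by specialising $A = C = R$ and using, as in Theorem~\ref{thm:extandcupprod}, that $R_{\bullet} \otimes_R R_{\bullet}$ is again a projective resolution of $R$.

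For the second assertion I would argue at the level of almost chain maps, using the hypercohomology description of the Yoneda product from Theorem~\ref{thm:yonedaprods}. Fix representatives $f[n]_{\bullet+m} \colon F[m+n]_{\bullet} \rightarrow H[n]_{\bullet}$ and $g_{\bullet+n} \colon H[n]_{\bullet} \rightarrow J_{\bullet}$. The connecting homomorphism $\widehat{\delta}^n$ for $0 \rightarrow J \rightarrow J' \rightarrow J'' \rightarrow 0$ is computed by the usual lift-and-boundary recipe applied to the target map $g_{\bullet+n}$, producing an almost chain map into the resolution of $J''$. Since this recipe only alters the outer factor $g_{\bullet+n}$ while the Yoneda product is precomposition by the \emph{fixed} $f[n]_{\bullet+m}$, the operation commutes with the composition, giving $\widehat{\delta}^{m+n}([g] \circ [f]) = \widehat{\delta}^{n}([g]) \circ [f]$; no Koszul sign appears precisely because the connecting homomorphism touches the outermost map and the degree of $f$ plays no role. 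I would make this precise by checking that a lift of $g_{\bullet+n}$ composed with $f[n]_{\bullet+m}$ is a lift of the composite, so that their boundaries coincide.

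The main obstacle is the structural input itself: verifying that the one-step connecting homomorphism $\widehat{\delta}$ attached to an arbitrary short exact sequence coincides, under the resolution construction, with the direct limit of the ordinary connecting homomorphisms of the Horseshoe-Lemma system of syzygy sequences, and that this identification is independent of the chosen resolutions (using that left satellites do not depend on such choices). Once this is in hand, the remaining work — the finite-stage commutativities, the cofinality of the diagonal direct system used for $\vee$, and the sign bookkeeping — is routine, since it merely carries the already-established Diagrams~\ref{diag:extandconnfirst} and~\ref{diag:extandconnsecond} through the direct limit with the aid of Proposition~\ref{prop:dirlimcommswithtensor}.
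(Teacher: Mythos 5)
Your overall strategy coincides with the paper's: reduce both assertions to the finite-stage compatibilities of Diagrams~\ref{diag:extandconnfirst} and~\ref{diag:extandconnsecond} and pass to direct limits, treating the Yoneda case by composing almost chain maps in the hypercohomology construction. Your second assertion is handled exactly as in the paper and is fine. The genuine gap is in the first assertion, and it sits precisely where you declare the work routine. The vertical maps of your finite-stage squares are the connecting homomorphisms of the Horseshoe sequences $0 \rightarrow \widetilde{B}_k \rightarrow \widetilde{B}_k' \rightarrow \widetilde{B}_k'' \rightarrow 0$, whereas the transition maps of the direct systems computing $\widehat{\mathrm{Ext}}^m(A, B)$ and $\widehat{\mathrm{Ext}}^{m+1}(A, B'')$ are the connecting homomorphisms of the syzygy sequences $0 \rightarrow \widetilde{B}_{k+1} \rightarrow B_k \rightarrow \widetilde{B}_k \rightarrow 0$. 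Connecting homomorphisms arising from the rows and columns of a $3 \times 3$ diagram of short exact sequences \emph{anticommute}, so your squares are compatible from stage $k$ to stage $k+1$ only up to the sign $-1$: the family of finite-stage connecting homomorphisms is not a morphism of direct systems, and its ``direct limit'' is undefined as stated. This is exactly why the maps in the paper's Diagram~\ref{diag:extproddoublecube} carry the stage-dependent signs $(-1)^k$ and $(-1)^{k+1}$, and why the paper must concatenate \emph{four} copies of that diagram before the right-hand column becomes an honest direct system yielding the right-hand map of Diagram~\ref{diag:complextandconnfirst}. Your only comment on signs --- that $(-1)^m$ is independent of $k$ and survives the limit --- addresses the harmless sign and misses the $k$-dependent ones, which are the actual crux of the argument.

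Compounding this, the ``structural input'' you rely on --- that under the resolution construction $\widehat{\delta}^m$ is the direct limit of the Horseshoe-Lemma connecting homomorphisms, independently of all choices --- is flagged by you as the main obstacle and never established, so the proposal is incomplete by its own account. The paper neither proves nor needs this identification (nor the Horseshoe Lemma at all): it works with the connecting homomorphism of the Mislin completion in the form already constructed in~\cite{ghe24}, which is realised by sign-corrected finite-stage connecting homomorphisms of the kind appearing in Diagram~\ref{diag:extproddoublecube}, and combines these with the commuting cube~\ref{diag:extprodcube} from the proof of Theorem~\ref{thm:extandcupprod}. If you wish to keep the Horseshoe route, you must both insert the signs $(-1)^k$ to make your vertical maps a morphism of direct systems and then prove that this sign-corrected limit agrees with $\widehat{\delta}^m$ as defined in~\cite{ghe24}; neither step is routine, and the second is essentially the content you were hoping to import for free.
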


\begin{proof}
The statement about Yoneda products follows their definition via the hypercohomology construction and the construction of the connecting homomorphism found in~\cite[Definition~6.6]{ghe24}. Let us argue that Diagram~\ref{diag:complextandconnfirst} commutes. For $k, l \in \mathbb{N}_0$ set $K := n+k$ and $L := n+l$. It follows from Diagram~\ref{diag:extprodcube} and the proof of Theorem~\ref{thm:extandcupprod} that the diagram on the next page is commutative. The left hand side of Diagram~\ref{diag:extproddoublecube} gives rise to the left hand homomorphisms of Diagram~\ref{diag:complextandconnfirst}, the front side to the bottom homomorphism and the  back side to the top homomorphism. After concatenating four copies of Diagram~\ref{diag:extproddoublecube}, the resulting direct system from the right hand side gives rise to the right hand homomorphism in Diagram~\ref{diag:complextandconnfirst}. One can use Diagram~\ref{diag:extprodcube} and the proof of Theorem~\ref{thm:extandcupprod} to construct an analogous diagram which gives rise to a direct system of commuting squares in whose direct limit we obtain Diagram~\ref{diag:complextandconnsecond}.
\end{proof}\pagebreak

\begin{samepage}
\begin{center}
\rotatebox{90}{%
\begin{tikzcd}[ampersand replacement = \&, column sep = tiny]
  \& {\scriptstyle \mathrm{Ext}_R^K(A{,} \, \widetilde{B}_k) \otimes \mathrm{Ext}^L(C{,} \, \widetilde{E}_l)} \arrow[dl, "{\scriptstyle (-1)^k \delta^K \otimes \mathrm{id}}"] \arrow[rr, "{\scriptstyle \vee}" near end] \arrow[ddd, "{\scriptstyle \mathrm{id} \otimes \delta^L}" near start]
    \& \& {\scriptstyle \mathrm{Ext}^{K+L}(A \otimes_R C{,} \, \widetilde{B}_k \otimes_R \widetilde{E}_l)} \arrow[dl, "{\scriptstyle (-1)^k \delta^{K+L}}"] \arrow[ddd, "{\scriptstyle (-1)^K \delta^{K+L}}" near start] \\
  {\scriptstyle \mathrm{Ext}^{K+1}(A{,} \, \widetilde{B}_{k+1}'') \otimes \mathrm{Ext}^L(C{,} \, \widetilde{E}_l)} \arrow[rr, crossing over, "{\scriptstyle \vee}" near end] \arrow[ddd, "{\scriptstyle \mathrm{id} \otimes \delta^L}" near start]
    \& \& {\scriptstyle \mathrm{Ext}^{K+L+1}(A \otimes_R C{,} \, \widetilde{B}_{k+1}'' \otimes_R \widetilde{E}_l)} \\ \\
  \& {\scriptstyle \mathrm{Ext}^K(A{,} \, \widetilde{B}_k) \otimes \mathrm{Ext}^{L+1}(C{,} \, \widetilde{E}_{l+1})} \arrow[dl, "{\scriptstyle (-1)^k \delta^K \otimes \mathrm{id}}" near start] \arrow[rr, "{\scriptstyle \vee}" near end] \arrow[ddd, "{\scriptstyle \delta^K \otimes \mathrm{id}}" near start]
    \& \& {\scriptstyle \mathrm{Ext}^{K+L+1}(A \otimes_R C{,} \, \widetilde{B}_k \otimes_R \widetilde{E}_{l+1})} \arrow[dl, "{\scriptstyle (-1)^k \delta^{K+L+1}}"] \arrow[ddd, "{\scriptstyle \delta^{K+L+1}}" near start] \\
  {\scriptstyle \mathrm{Ext}^{K+1}(A{,} \, \widetilde{B}_{k+1}'') \otimes \mathrm{Ext}^{L+1}(C{,} \, \widetilde{E}_{l+1})} \arrow[rr, crossing over, "{\scriptstyle \vee}" near end] \arrow[ddd, "{\scriptstyle \delta^{K+1} \otimes \mathrm{id}}" near start]
    \& \& {\scriptstyle \mathrm{Ext}^{K+L+2}(A \otimes_R C{,} \, \widetilde{B}_{k+1}'' \otimes_R \widetilde{E}_{l+1})} \arrow[from=uuu, crossing over, "{\scriptstyle (-1)^{K+1} \delta^{K+L+1}}" near start] \\ \\
    \& {\scriptstyle \mathrm{Ext}^{K+1}(A{,} \, \widetilde{B}_{k+1}) \otimes \mathrm{Ext}^{L+1}(C{,} \, \widetilde{E}_{l+1})} \arrow[dl, "{\scriptstyle (-1)^{k+1}\delta^{K+1} \otimes \mathrm{id}}" near start] \arrow[rr, "{\scriptstyle \vee}" near end]
    \& \& {\scriptstyle \mathrm{Ext}^{K+L+2}(A \otimes_R C{,} \, \widetilde{B}_{k+1} \otimes_R \widetilde{E}_{l+1})} \arrow[dl, "{\scriptstyle (-1)^{k+1}\delta^{K+L+2}}"] \\
  {\scriptstyle \mathrm{Ext}^{K+2}(A{,} \, \widetilde{B}_{k+2}'') \otimes \mathrm{Ext}^{L+1}(C{,} \, \widetilde{E}_{l+1})} \arrow[rr, "{\scriptstyle \vee}" near end]
    \& \& {\scriptstyle \mathrm{Ext}^{K+L+3}(A \otimes_R C{,} \, \widetilde{B}_{k+2}'' \otimes_R \widetilde{E}_{l+1})} \arrow[from=uuu, crossing over, "{\scriptstyle \delta^{K+L+2}}" near start] \\
\end{tikzcd}
}
\end{center}
{\vspace{-110mm}
\begin{equation}\label{diag:extproddoublecube}
{} \quad {}
\end{equation}
${} \quad {}$ \vspace{100mm}}
\end{samepage}

Our external products and cup products generalise previous constructions because they do do not hinge on dimension shifting in order to be well defined. More specifically, cup products have been developed for Tate--Farrell cohomology of groups with finite virtual cohomological dimension in~\cite[pp.~278--279]{bro82}. External products for completed unenriched Ext-functors of any category of modules over a ring are constructed via the naïve construction in~\cite[p.~110]{ben92}. However, the latter external products are only well defined for Tate--Farrell Ext-functors. As is explained in~\cite[pp.~278--279]{bro82}, this is because both external products can be obtained from the external products of `ordinary' Ext-functors via dimension shifiting in sufficiently high dimensions. Therefore, the following generalises both \cite[p.~110]{ben92} and~\cite[pp.~278--279]{bro82}.

\begin{lem}\label{lem:tatefarrelltwo}
(Generalising Tate--Farrell, again)\newline
External products of completed Ext-functors generalise external products of Tate--Farrell Ext-functors. Accordingly, cup products of complete cohomology generalise cup products of Tate--Farrell cohomology.
\end{lem}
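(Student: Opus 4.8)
The plan is to reduce everything to the isomorphism $\overline{\mathrm{Ext}}_{\mathcal{C}}^{\bullet}(A,-)\cong\widehat{\mathrm{Ext}}_{\mathcal{C}}^{\bullet}(A,-)$ of cohomological functors furnished by Lemma~\ref{lem:tatefarrell}, and to show that in the finite (virtual) cohomological dimension setting, where the Tate--Farrell products of~\cite[p.~110]{ben92} and~\cite[pp.~278--279]{bro82} are defined, this isomorphism carries our external product onto theirs. The key preliminary observation is that there exists $N\in\mathbb{N}_0$ with $\overline{\mathrm{Ext}}^k(A,B)=\mathrm{Ext}^k(A,B)$ for all $k\geq N$: this is immediate from the defining property of a complete resolution recorded before Lemma~\ref{lem:tatefarrell}, namely that $(\overline{A}_k)_{k\geq N}$ agrees with a projective resolution of $A$, so that $\overline{\mathrm{Ext}}^k(A,B)=H^k(\mathrm{Hom}_{\mathcal{C}}(\overline{A}_{\bullet},B))=\mathrm{Ext}^k(A,B)$ in that range. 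The same holds for the second variable, and it is precisely this agreement that both Brown and Benson--Carlson exploit.

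First I would recall that the Tate--Farrell external product is defined by dimension shifting: given classes in degrees $m$ and $n$, one applies the connecting-homomorphism isomorphisms of the syzygy short exact sequences $0\to\widetilde{B}_{k+1}\to B_k\to\widetilde{B}_k\to 0$ (and the analogous ones for $E$) until both arguments lie in degrees $\geq N$, applies the ordinary external product from Equation~\ref{eq:ordinaryextprod} there, and shifts back down. Well-definedness rests on the compatibility of the ordinary external product with connecting homomorphisms recorded in Diagram~\ref{diag:extandconnfirst} and Diagram~\ref{diag:extandconnsecond}, together with the cancellation of signs after an even number of shifts. Next I would observe that our completed external product is assembled from exactly the same data: by the resolution-construction part of the proof of Theorem~\ref{thm:extandcupprod}, the map $\vee$ on completed Ext-functors is the direct limit $\varinjlim_{k}\vee$ of the ordinary external products $\mathrm{Ext}^{m+k}(A,\widetilde{B}_k)\otimes\mathrm{Ext}^{n+k}(C,\widetilde{E}_k)\to\mathrm{Ext}^{m+n+2k}(A\otimes_R C,\widetilde{B}_k\otimes_R\widetilde{E}_k)$, whose transition maps are the connecting homomorphisms appearing in Diagram~\ref{diag:extprodcube}.

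The identification then proceeds as follows. In the finite (virtual) cohomological dimension setting the transition maps of this direct system are isomorphisms once $k\geq N$, so the system is eventually constant and its direct limit is canonically identified with its stable term; by construction that stable term is exactly the value produced by the Tate--Farrell dimension-shifting recipe. Invoking the compatibility of our completed product with connecting homomorphisms from Lemma~\ref{lem:cohomprodsandconn}, and its agreement with the ordinary external product under the canonical morphism $\Phi^{\bullet}$ from Proposition~\ref{prop:canisringhomom}, then pins down the two products as the same element of the common direct limit, transported along the isomorphism of Lemma~\ref{lem:tatefarrell}. The cup-product assertion follows by specialising $A=C=R$, exactly as in the passage from Equation~\ref{eq:ordinaryextprod} to Equation~\ref{eq:ordinarycupprod}.

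I expect the main obstacle to be the bookkeeping that matches the two shifting patterns and their signs: Brown's recipe shifts each argument independently and reconciles the resulting degrees afterwards, whereas our alternating sequence $o\in\lbrace 0,1\rbrace^{\mathbb{N}}$ shifts both arguments in lockstep into the syzygies $\widetilde{B}_k$ and $\widetilde{E}_k$ simultaneously. Showing that the two procedures land on the same element of $\widehat{\mathrm{Ext}}^{m+n}(A\otimes_R C,B\otimes_R E)$ is precisely the kind of index reconciliation carried out in the proof of~\cite[Theorem~4.15]{ghe24} and in the associativity argument of Lemma~\ref{lem:cohomprodsassoc}, where cofinal choices of shifting sequences are shown to induce the same map in the direct limit; I would reuse that technique, together with the sign cancellation after even-length shifts already noted for Diagram~\ref{diag:extandconnfirst} and Diagram~\ref{diag:extandconnsecond}, to complete the verification.
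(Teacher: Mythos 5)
Your proposal is correct and follows essentially the same route as the paper's proof: identify the Tate--Farrell and completed products in sufficiently high degrees (where the canonical morphism $\Phi^n$ is an isomorphism, equivalently where your direct system stabilises), note via Proposition~\ref{prop:canisringhomom} that both then coincide with the ordinary external product there, and propagate the agreement to all degrees using dimension shifting together with the compatibility with connecting homomorphisms from Lemma~\ref{lem:cohomprodsandconn}. The extra bookkeeping you flag (reconciling Brown's independent shifts with the lockstep sequence $o$ via cofinality, as in the associativity argument) is a legitimate way to make the final identification explicit, but it is the same underlying argument.
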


\begin{proof}
Let $A$ be an object admitting a complete resolution. By the definition of Tate--Farrell Ext-functors, Proposition~\ref{prop:detamislincompletion} and Lemma~\ref{lem:tatefarrell}, there is $k \in \mathbb{N}$ such that for every $n \geq k$ the canonical morphism $\Phi^n: \mathrm{Ext}^n(A, B) \rightarrow \widehat{\mathrm{Ext}}^n(A, B)$ is the identity map. Thus, external products of Ext-functors agree with the external products of completed Ext-functors in degrees higher or equal to $k$ by Proposition~\ref{prop:canisringhomom}. These uniquely determine external products of any other degree due to dimension shifting (Theorem~\ref{thm:dimshifting}) and the fact the external products commute with connecting homomorphisms (Lemma~\ref{lem:cohomprodsandconn}), which completes the proof.
\end{proof}

\section*{Acknowledgements}

A special thanks goes to Peter H. Kropholler for sharing his expertise with me. He has discussed examples of complete cohomology groups of discrete groups with me and pointed out his joint paper with Jonathan Cornick ``On Complete Resolutions''. In particular, he has advised me to generalise Yoneda and external products to complete cohomology and introduced me to condensed mathematics. \\

Moreover, I am thankful to Andrew Fisher for pointing out the paper ``Complete Homology over Associative Rings'' by Olgur Celikbas et al. Lastly, I would like to acknowledge Alejandro Adem, Jon F.\! Carlson, Joel Friedman, Kalle Karu, Zinovy Reichstein and Ben Williams who assessed my PhD thesis and whose invaluable suggestions have led to an improvement of the exposition.

\addcontentsline{toc}{section}{References}
\renewcommand{\bibname}{References}
\bibliographystyle{plain}  
\bibliography{refs}        

\end{document}